\title{Height representation of XOR-Ising loops\\
via bipartite dimers}
\author{C\'edric Boutillier \& B\'eatrice de Tili\`ere
\thanks{{\small
Laboratoire de Probabilit\'es et Mod\`eles Al\'eatoires, UMR~7599, Universit\'e Pierre et Marie Curie, 4 place Jussieu, 
F-75005 Paris.}
{\small\texttt{cedric.boutillier@upmc.fr},\small\texttt{
beatrice.de\_tiliere@upmc.fr}}
}}
\date{}
\begin{document}

\maketitle

\begin{abstract}
The XOR-Ising model on a graph consists of random spin configurations
on vertices of the graph obtained by
taking the product at each vertex of the spins of two
independent
Ising models. In this paper, we explicitly
relate loop configurations of the XOR-Ising model and those of a dimer model
living on a decorated, bipartite version of the Ising graph. This result is
proved for graphs embedded in compact surfaces of genus $g$.
Using this fact, we then prove that XOR-Ising loops have the same law as level
lines of the height function of this bipartite dimer model. At criticality, the
height function is known to converge weakly in distribution to
$\frac{1}{\sqrt{\pi}}$ a Gaussian free field \cite{Bea2}. As a consequence,
results of this paper shed a light on the occurrence of the Gaussian free field
in the XOR-Ising model. 
In particular, they prove a discrete analogue of Wilson's conjecture \cite{WilsonXOR}, stating that the scaling
limit of XOR-Ising loops are ``contour lines'' of the Gaussian free field.
\end{abstract}

\section{Introduction}

The \emph{double Ising model} consists of two Ising models, living on the same
graph. It is related \cite{KadanoffWegner,Wu71,Fan72b,Wegner}
to other models of statistical mechanics, as the 8-vertex
model \cite{sutherland,FanWu} and the Ashkin--Teller model \cite{AshkinTeller}.
In general, the two models
may be interacting. However, in this paper, we consider the case of
two non-interacting Ising models, defined on
the dual
$G^*=(V^*,E^*)$ of a graph $G=(V,E)$, having the same coupling constants
$(J_{e^*})_{e^*\in E^*}$, where the graph $G$ is
embedded either in a compact, orientable, boundaryless surface $\Sigma$ of genus $g\geq 0$, or
in the plane.

We are interested in the polarization of the model \cite{KadanoffBrown}, also
referred to as the XOR-\emph{Ising model} \cite{WilsonXOR} by Wilson. It is
defined
as follows: given a pair of spin
configurations $(\sigma,\sigma')\in\{-1,1\}^{V^*}\times
\{-1,1\}^{V^*}$,
the XOR-\emph{spin configuration} belongs to $\{-1,1\}^{V^*}$ and
is obtained by taking, at every vertex, the product of the spins.
The interface between $\pm 1$ spin
configurations of the XOR-configuration is a loop configuration of the
graph~$G$. Using extensive simulations, Wilson
\cite{WilsonXOR} finds that, when $G$ is a specific simply connected domain of
the plane, and when both Ising models are
critical, XOR loop configurations seem to have the same limiting behavior 
as ``contour lines'' of the Gaussian free field,
with heights of the contours spaced $\sqrt{2}$ times as far apart as they should be 
for the double dimer model on the square lattice. Similar conjectures involving
SLE rather than the Gaussian free field, are obtained through conformal field
theory \cite{IkhlefRajabpour,Picco}. Results of this paper explain the occurrence of the Gaussian free field in the 
XOR-Ising model and prove a discrete analogue of Wilson's conjecture.

The first part of this paper concentrates on finite graphs embedded in surfaces.
We explicitly relate XOR loop configurations to loop configurations in a
bipartite dimer model, implying in particular that
 both loop configurations have the same probability
distribution. In the second part, we prove that this correspondence still holds
for a large class of infinite planar graphs, the
so-called \emph{isoradial graphs} \cite{Kenyon3,KeSchlenk}, at criticality,
and make the connection with Wilson's conjecture. Here is an outline.

\begin{center}
\textbf{Outline}
\end{center}

\textbf{Section~\ref{sec1}}. One of the tools required is 
a version of Kramers and Wannier's low/high-temperature duality
\cite{KramersWannier1,KramersWannier2} in the case
of graphs embedded in surfaces of genus $g$, \emph{with boundary}. In the
literature, we did find versions of
this duality for graphs embedded in surfaces of genus $g$ \cite{LiGuo}, but we
could not find versions taking into account boundaries. This is the subject of
Propositions~\ref{prop:lowtemp} and~\ref{prop:hightemp}, it involves relative
homology theory and the Poincar\'e--Lefschetz duality.

\textbf{Sections~\ref{sec:2} and~\ref{sec:mixedcontour}} consist of the
extension to general graphs embedded in a surface of genus~$g$ of an expansion
due to Nienhuis \cite{Nienhuis}, which can be summarized as follows. Consider
the low-temperature expansion of
the double Ising model, \emph{i.e.}, consider pairs of polygon configurations
separating
clusters of $\pm 1$ spins of
each spin configuration. Drawing both polygon configurations on $G$ yields an
edge configuration consisting of \emph{monochromatic edges}, that is edges
covered by exactly one of the two polygon configurations, and \emph{bichromatic
edges}, that is edges covered by both polygon configurations. Monochromatic edge
configurations exactly correspond to XOR loop configurations, and separate
the surface $\Sigma$ into connected components $\Sigma_1,\ldots,\Sigma_N$.
Inside each connected component, the law of bichromatic edge
configurations is that of the low-temperature expansion of an Ising model with
coupling constants that are doubled. As a consequence, the partition
function of the double Ising model can be rewritten using XOR loop
configurations and bichromatic edge configurations, see Proposition~\ref{cor:Zisingcarree}.

Fixing a monochromatic edge configuration, and applying
low/high-temperature duality to the single Ising model corresponding to
bichromatic edges, yields a rewriting of the double Ising
partition function, as a sum over
pairs of non-intersecting polygon configurations of the primal and dual
graph, where primal polygon configurations exactly correspond to XOR
loop configurations, see Proposition~\ref{prop:mixedcontour2} and
Corollary~\ref{prop:mixedcontour}. Note that there are quite a few difficulties
in the proofs, due to the fact that we work on a
surface of genus $g$.

\begin{prop}\label{thm:main3}$\,$\\
The double Ising partition function for a graph embedded on a surface of genus
$g$ can be rewritten as:
\begin{equation*}
\Zdising(G^*,J)=\C_{\mathrm{I}}\sum_{
\substack{\{(P,P^*)\in\P^0(G)\times\P^0(G^*):\\
P\cap P^*=\emptyset\} }
}
\left(\prod_{e\in P}\frac{2e^{-2J_{e^*}}}{1+e^{-4J_{e^*}}}\right)
\left(\prod_{e^*\in P^*}\frac{1-e^{-4J_{e^*}}}{1+e^{-4J_{e^*}}}\right),
\end{equation*}
where primal polygon configurations of $\P^0(G)$ are the $\XOR$ loop
configurations, and $\C_{\mathrm{I}}=2^{|V^*|+2g+1}\left(\prod_{e\in E}\cosh
(2J_{e^*})\right)$.
\end{prop}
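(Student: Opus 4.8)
The plan is to combine Proposition~\ref{cor:Zisingcarree} with the surface-with-boundary low/high-temperature duality of Propositions~\ref{prop:lowtemp} and~\ref{prop:hightemp} — which is packaged in Proposition~\ref{prop:mixedcontour2} and Corollary~\ref{prop:mixedcontour} — and then to make the resulting multiplicative constant explicit. First I would invoke Proposition~\ref{cor:Zisingcarree}: it writes $\Zdising(G^*,J)$ as a sum over XOR loop configurations $P\in\P^0(G)$, the monochromatic edge configurations, each $P$ carrying an explicit weight supported on its edges, times the partition function of the bichromatic edge configurations compatible with $P$. These are supported in the cut-open surface $\Sigma\setminus P$, whose connected components I write $\Sigma_1,\dots,\Sigma_N$; on each $\Sigma_i$ they follow the low-temperature expansion of an Ising model on the subgraph of $G^*$ contained in $\Sigma_i$, with coupling constants doubled to $(2J_{e^*})$. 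Hence, for fixed $P$, the bichromatic sum factorises over the $\Sigma_i$ into a product of low-temperature Ising$(2J)$ partition functions.

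Second, I would dualize each of these factors. Applying the duality of Section~\ref{sec1} to the Ising$(2J)$ model on $\Sigma_i$ — a compact surface of some genus $g_i$ with $b_i\geq 1$ boundary components — rewrites its low-temperature partition function as an explicit constant, depending only on $|V^*(\Sigma_i)|$, $g_i$ and $b_i$, times $\bigl(\prod_{e^*\in\Sigma_i}\cosh(2J_{e^*})\bigr)$, times a high-temperature sum $\sum_{P^*_i}\prod_{e^*\in P^*_i}\tanh(2J_{e^*})$ over the admissible polygon configurations $P^*_i$ of $G^*$ supported in $\Sigma_i$. Since $\tanh(2J_{e^*})=\frac{1-e^{-4J_{e^*}}}{1+e^{-4J_{e^*}}}$, the dual edge weights already match the statement; this step is Proposition~\ref{prop:mixedcontour2}.

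Third, I would glue the components back together. The condition $P\cap P^*=\emptyset$ says precisely that $P^*$ uses no edge of $G^*$ dual to an edge of $P$, so any dual configuration disjoint from $P$ is supported in $\bigsqcup_i\Sigma_i$ and splits uniquely as $P^*=\bigsqcup_i P^*_i$; conversely any family of admissible configurations glues to such a $P^*$, and one checks these two descriptions are in bijection (the surfaces bounding the $P^*_i$, which avoid $P$, assembling into one bounding $P^*$, so that $P^*\in\P^0(G^*)$). For each fixed $P$ the product of component sums therefore collapses to $\sum_{\{P^*\in\P^0(G^*)\,:\,P\cap P^*=\emptyset\}}\prod_{e^*\in P^*}\tanh(2J_{e^*})$, which is Corollary~\ref{prop:mixedcontour}; it then remains to recognise the overall constant as $\C_{\mathrm{I}}$ and the weight of $P$ as $\prod_{e\in P}\frac{2e^{-2J_{e^*}}}{1+e^{-4J_{e^*}}}$.

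For the constant, I would track the accumulated factors. On the multiplicative side, the $\cosh$ weights produced over the dual edges inside the $\Sigma_i$, together with the weight of $P$ (note $\cosh(2J_{e^*})\cdot\frac{2e^{-2J_{e^*}}}{1+e^{-4J_{e^*}}}=1$), should reassemble into the factor $\prod_{e\in E}\cosh(2J_{e^*})$ of $\C_{\mathrm{I}}$. On the power-of-$2$ side, the contributions $2^{|V^*(\Sigma_i)|}$ from the spin sums, the topological constants of Propositions~\ref{prop:lowtemp} and~\ref{prop:hightemp} (which involve the genera $g_i$ and the numbers $b_i$ of boundary components of the $\Sigma_i$), and the prefactor coming from Proposition~\ref{cor:Zisingcarree} must together sum, for \emph{every} $P\in\P^0(G)$, to exactly $2^{|V^*|+2g+1}$. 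I expect this $P$-independence to be the main obstacle: one must control how the first Betti numbers and the boundary-component counts of the pieces $\Sigma_i$ respond to cutting $\Sigma$ along $P$, and see the $P$-dependence cancel to leave $2g+1$. This is exactly where the relative-homology and Poincar\'e--Lefschetz machinery of Section~\ref{sec1} is genuinely used, the constraint $P\cap P^*=\emptyset$ encoding the Poincar\'e--Lefschetz pairing between the primal configuration $P$ and the dual configuration $P^*$.
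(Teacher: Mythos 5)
Your outline follows the paper's skeleton (Proposition~\ref{cor:Zisingcarree}, then duality on each cut component, then gluing), but it drops the one structural feature that makes the argument work: the double Ising partition function is $\sum_{\eps\in\homol{1}}(\Zising^{\eps})^2$, and for a fixed monochromatic configuration $P$ the bichromatic edges on each component $\Sigma_i$ form the low-temperature expansion of an Ising$(2J)$ model \emph{with defect condition $\Pi_i(\eps)$}, not of a single defect-free model. Consequently the high-temperature dual on $\Sigma_i$ is not an unsigned sum over some ``admissible'' configurations: by Proposition~\ref{prop:hightemp} it is a sum over \emph{all} dual polygon configurations of $G^*_{\Sigma_i}$, of every homology class $\tau^i\in\homol[\Sigma_i]{1}$, weighted by signs $(-1)^{(\tau^i|\Pi_i(\eps))}$, and with a constant $2^{|V^*_{\Sigma_i}|-1}\prod\cosh(2J_{e^*})/e^{2J_{e^*}}$ carrying no dependence on $g_i$ or $b_i$. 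The restriction visible in the final formula only appears after summing over $\eps$: the orthogonality relation \eqref{eq:orthogonality} kills every term except those with $\sum_i\pi_i(\tau^i)=0$, i.e.\ those for which the glued configuration $P^*=\cup_i P_i^*$ has homology class $0$ in $\Sigma$, and it simultaneously produces the factor $2^{2g}$ in $\C_{\mathrm{I}}$. Your proposal never sums over $\eps$, never introduces the signs, and never invokes orthogonality, so it has no mechanism to produce either the constraint $P^*\in\P^0(G^*)$ or the genus factor.

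The gap is not cosmetic, because your step three mischaracterizes which dual configurations contribute. The correct condition is that the \emph{union} $\cup_i P_i^*$ is null-homologous in $\Sigma$; this is strictly weaker than each $P_i^*$ bounding inside its own component. For instance, if $\Sigma$ is a torus and $P$ consists of two disjoint homologous non-contractible loops, cutting along $P$ gives two annuli, and a dual configuration made of one core curve in each annulus has total class $0$ in $\Sigma$ and does contribute to the sum in the statement, although neither piece bounds in its annulus. A gluing argument based on ``surfaces bounding the $P_i^*$ assembling into one bounding $P^*$'' therefore undercounts exactly the configurations responsible for the genus dependence. Likewise, the $P$-independence of the constant that you flag as the main obstacle is not settled by Euler-characteristic bookkeeping of the $g_i$ and $b_i$ (those never enter the duality constants); in the paper's proof the powers of two combine trivially as $2^{n_P-1}\cdot 2^{2}\cdot 2^{|V^*|-n_P}=2^{|V^*|+1}$, and the remaining $2^{2g}$ counts the defect classes $\eps$ via \eqref{eq:orthogonality}. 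To repair your argument you need to carry the defect class $\eps$ through Propositions~\ref{thm:mixedcontour1} and~\ref{prop:mixedcontour2} exactly as the paper does.
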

 
\textbf{Section~\ref{sec5}}. In Section~\ref{sec31}, we define the
6-vertex model on the medial graph $\GM$ constructed from $G$. Reformulating
an argument of Nienhuis \cite{Nienhuis}, we prove that the 6-vertex partition
function can be written as a sum over non-intersecting pairs of 
polygon configurations of the primal and dual graphs. 

In Section~\ref{sec:52}, we define the dimer model on the decorated, bipartite
graph $\GQ$
constructed from~$G$. Then, we present the
mapping between dimer configurations of $\GQ$ and \emph{free-fermionic} 6-vertex configurations of
$\GM$ \cite{WuLin,Dubedat}.
Using both mappings, one assigns to every dimer
configuration $M$ a pair $\poly(M)=(\poly_1(M),\poly_2(M))$ of non-intersecting
primal and dual polygon configurations. 
The weights of the 6-vertex model chosen to match those of edges in the mixed
contour expansion of the double Ising model satisfy the \emph{free-fermionic}
condition. As a consequence, we then obtain, see also
Proposition~\ref{prop:quadritilings}:
\begin{prop}\label{prop:main}
The dimer model partition function $\Zquadri^0(\GQ,J)$ can be rewritten as:
$$
\Zquadri^0(\GQ,J)=2\sum_{
\substack{\{(P,P^*)\in\P^0(G)\times\P^0(G^*):\\
P\cap P^*=\emptyset\} }
}
\left(\prod_{e\in P}\frac{2e^{-2J_{e^*}}}{1+e^{-4J_{e^*}}}\right)
\left(\prod_{e^*\in P^*}\frac{1-e^{-4J_{e^*}}}{1+e^{-4J_{e^*}}}\right),
$$
where primal and dual polygon configurations of $\P^0(G)\times \P^0(G^*)$
are the $\poly$ configurations.
\end{prop}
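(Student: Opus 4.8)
The plan is to obtain the claimed identity by composing the two correspondences set up in this section: the mapping of Section~\ref{sec:52} between dimer configurations of $\GQ$ and free-fermionic $6$-vertex configurations of $\GM$ \cite{WuLin,Dubedat}, and the reformulation of Nienhuis's argument of Section~\ref{sec31} expressing the free-fermionic $6$-vertex partition function of $\GM$ as a sum over non-intersecting pairs of primal and dual polygon configurations. The whole difficulty lies in keeping track of the weights and of the sizes of the fibres of each map — the latter being where the prefactor $2$ comes from — and in handling the restriction to the null-homologous sectors (the superscripts $0$) on a surface of genus $g$.

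First I would write $\Zquadri^0(\GQ,J)=\sum_{M}w_{\GQ}(M)$ and reorganise this sum by grouping the dimer configurations $M$ according to their image $6$-vertex configuration $C$ under the mapping of Section~\ref{sec:52}. Since this mapping is local — at each vertex of $\GM$ it only involves the decorating gadget of $\GQ$ and the incident edges of $\GM$ — and since the dimer weights of $\GQ$ were chosen precisely so that the associated local $6$-vertex weights obey the free-fermionic relation, summing $w_{\GQ}(M)$ over the fibre above a fixed $C$ reproduces, up to a global constant $c$ independent of $C$ read off from the gadget combinatorics, the $6$-vertex weight of $C$. Recalling that the superscript $0$ restricts the dimer sum to configurations whose $\poly_1$-image is null-homologous, and checking that this restriction corresponds under the mapping to the analogous restriction on $6$-vertex configurations, one gets $\Zquadri^0(\GQ,J)=c\,Z^0_{\mathrm{6v}}(\GM)$.

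Next I would invoke the statement of Section~\ref{sec31}: for the weights at hand, $Z^0_{\mathrm{6v}}(\GM)$ equals the sum over non-intersecting pairs $(P,P^*)\in\P^0(G)\times\P^0(G^*)$ of the product, over the vertices of $\GM$, of the local weight of the corresponding polygon state (``$e\in P$'', ``$e^*\in P^*$'', or ``neither''). Here one must verify, vertex by vertex, that the $6$-vertex weights fixed in Section~\ref{sec:52} — matched to the edges of the mixed-contour expansion, i.e. to Corollary~\ref{prop:mixedcontour} — collapse to exactly the two monomial weights $\tfrac{2e^{-2J_{e^*}}}{1+e^{-4J_{e^*}}}$ for $e\in P$ and $\tfrac{1-e^{-4J_{e^*}}}{1+e^{-4J_{e^*}}}$ for $e^*\in P^*$, all other contributions being $1$. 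The composite of the two maps is, by construction, the map $M\mapsto\poly(M)=(\poly_1(M),\poly_2(M))$, which lands in the set of non-intersecting primal/dual pairs, so the last assertion of the proposition is immediate. It then remains to combine the constant $c$ with the (weighted) fibre size of the $6$-vertex $\to$ polygon-pair correspondence; the claim is that the product equals exactly $2$, uniformly in the genus.

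The main obstacle is precisely this last bookkeeping. One must pin down exactly — not merely up to order of magnitude — the multiplicities of both correspondences, in particular the contribution of the local states projecting to ``nothing happens at this vertex of $\GM$'', and understand why these multiplicities assemble into a single global factor $2$ rather than an uncontrolled power of $2$; and one must check that the homological constraints defining $\P^0(G)$, $\P^0(G^*)$ and $\Zquadri^0$ are preserved under both maps, with no spurious contribution coming from non-trivial cycles or from the surface topology — the same kind of genus-$g$ subtlety already dealt with in Propositions~\ref{prop:mixedcontour2} and~\ref{thm:main3}. Everything else is a routine, if lengthy, reorganisation of finite sums; as a consistency check, the sum obtained coincides with the one in Proposition~\ref{thm:main3}, so in the end $\Zquadri^0(\GQ,J)=\tfrac{2}{\C_{\mathrm{I}}}\,\Zdising(G^*,J)$.
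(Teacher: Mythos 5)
Your proposal is correct and follows essentially the same route as the paper: Proposition~\ref{prop:quadritilings} is obtained there by composing Mapping II (weight-preserving because the free-fermionic relation $a_e^2+b_{e^*}^2=1$ makes your constant $c$ exactly $1$, the two internal matchings of an empty decoration having weights $a_e^2$ and $b_{e^*}^2$ summing to the $C$-weight $1$) with Mapping I, whose global two-to-one multiplicity (Lemma~\ref{lem:mapping6V}) is the sole source of the prefactor $2$. The homological bookkeeping you flag is handled just as you suggest: $\Zquadri^0$ is defined by requiring $\poly_1(M)\in\P^0(G)$, and since $\poly_1(M)\cup\poly_2(M)$ is always null-homologous, $\poly_2(M)\in\P^0(G^*)$ follows automatically, so the sum ranges exactly over non-intersecting pairs in $\P^0(G)\times\P^0(G^*)$.
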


Combining Proposition~\ref{thm:main3} and Proposition~\ref{prop:main}
yields the following, see also Theorem~\ref{thm:main1inside}:
\begin{thm}\label{thm:intro-equal-law}
XOR loop configurations of the double Ising model on $G^*$ have the
same law as $\poly_1$ configurations of the corresponding dimer model
on the bipartite graph $\GQ$:
$$
\forall P\in\P^{0}(G),\quad\PPdising[\XOR=P]=\PPquadri^0[\poly_1=P].
$$
\end{thm}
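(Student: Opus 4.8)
The plan is to combine the two partition-function identities of Propositions~\ref{thm:main3} and~\ref{prop:main} contribution by contribution in the primal configuration $P$, and then to cancel the normalizing constants. For $P\in\P^0(G)$ set
\[
\mu(P)=\left(\prod_{e\in P}\frac{2e^{-2J_{e^*}}}{1+e^{-4J_{e^*}}}\right)\sum_{\substack{P^*\in\P^0(G^*):\\ P\cap P^*=\emptyset}}\left(\prod_{e^*\in P^*}\frac{1-e^{-4J_{e^*}}}{1+e^{-4J_{e^*}}}\right),
\]
so that Proposition~\ref{thm:main3} reads $\Zdising(G^*,J)=\C_{\mathrm{I}}\sum_{P\in\P^0(G)}\mu(P)$ and Proposition~\ref{prop:main} reads $\Zquadri^0(\GQ,J)=2\sum_{P\in\P^0(G)}\mu(P)$. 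The crux will be to check that these identities already hold fibrewise in $P$, namely that the total Boltzmann weight of the double Ising configurations $(\sigma,\sigma')$ with $\XOR(\sigma,\sigma')=P$ is exactly $\C_{\mathrm{I}}\,\mu(P)$, and that the total dimer weight of the configurations $M$ of $\GQ$ with $\poly_1(M)=P$ is exactly $2\,\mu(P)$, for every $P\in\P^0(G)$.

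For the first of these I would replay the derivation of Sections~\ref{sec:2}--\ref{sec:mixedcontour} with the monochromatic edge configuration held fixed equal to $P$. By the low-temperature expansion of the double Ising model together with Proposition~\ref{cor:Zisingcarree}, the total weight of the double Ising configurations whose monochromatic (hence XOR) configuration is $P$ factors as a constant depending only on $|V^*|$ and the couplings, times $\prod_{e\in P}\frac{2e^{-2J_{e^*}}}{1+e^{-4J_{e^*}}}$, times the partition function of a single Ising model with doubled coupling constants on the connected components $\Sigma_1,\ldots,\Sigma_N$ into which $P$ cuts $\Sigma$; applying the low/high-temperature duality of Propositions~\ref{prop:lowtemp} and~\ref{prop:hightemp} to that single Ising model, exactly as in the proof of Proposition~\ref{prop:mixedcontour2} and Corollary~\ref{prop:mixedcontour}, rewrites this partition function as $\sum_{P^*\,:\,P\cap P^*=\emptyset}\prod_{e^*\in P^*}\frac{1-e^{-4J_{e^*}}}{1+e^{-4J_{e^*}}}$, up to a further genus- and coupling-dependent factor that does not depend on $P$. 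Collecting the prefactors reconstitutes $\C_{\mathrm{I}}$ and gives the first fibrewise identity; this is essentially the content of Theorem~\ref{thm:main1inside}.

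For the second I would use the correspondences of Section~\ref{sec5} between dimer configurations of $\GQ$, free-fermionic 6-vertex configurations of $\GM$, and non-intersecting pairs of polygon configurations. The resulting map $M\mapsto\poly(M)=(\poly_1(M),\poly_2(M))$ sends every dimer configuration of $\GQ$ to a non-intersecting pair in $\P^0(G)\times\P^0(G^*)$, and the 6-vertex weights were chosen to satisfy the free-fermionic condition precisely so that the total dimer weight of the preimage of a given pair $(P,P^*)$ equals $2\left(\prod_{e\in P}\frac{2e^{-2J_{e^*}}}{1+e^{-4J_{e^*}}}\right)\left(\prod_{e^*\in P^*}\frac{1-e^{-4J_{e^*}}}{1+e^{-4J_{e^*}}}\right)$, the factor $2$ being a multiplicity independent of $(P,P^*)$ coming from the dimer-to-6-vertex correspondence. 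Summing over the configurations $P^*$ disjoint from $P$ (the other preimages being empty) yields $2\,\mu(P)$, which is the refinement of Proposition~\ref{prop:main} underlying Proposition~\ref{prop:quadritilings}.

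Granting the two fibrewise identities, the theorem follows in one line: by definition the two marginals are
\[
\PPdising[\XOR=P]=\frac{\C_{\mathrm{I}}\,\mu(P)}{\Zdising(G^*,J)},\qquad\PPquadri^0[\poly_1=P]=\frac{2\,\mu(P)}{\Zquadri^0(\GQ,J)},
\]
and substituting $\Zdising(G^*,J)=\C_{\mathrm{I}}\sum_{P'\in\P^0(G)}\mu(P')$ and $\Zquadri^0(\GQ,J)=2\sum_{P'\in\P^0(G)}\mu(P')$ shows that both equal $\mu(P)/\sum_{P'\in\P^0(G)}\mu(P')$, the constants $\C_{\mathrm{I}}$ and $2$ cancelling. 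Thus the whole difficulty lies in the two fibrewise identities, i.e., in tracking the two non-bijective steps of the construction --- the relative-homology multiplicities in Kramers--Wannier duality on a surface of genus $g$, and the factor $2$ on the dimer side --- and checking that each contributes only a global constant, uniform in $P$; once Sections~\ref{sec1}--\ref{sec5} have dealt with this, no further work is required.
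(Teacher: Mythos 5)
Your proposal is correct and follows essentially the same route as the paper: the two fibrewise identities you isolate are exactly Proposition~\ref{prop:mixedcontour2}/Corollary~\ref{prop:mixedcontour} on the Ising side and Proposition~\ref{prop:quadritilings} on the dimer side, and the theorem (Theorem~\ref{thm:main1inside}) is then obtained, as you do, by normalizing and cancelling the constants $\C_{\mathrm{I}}$ and $2$. One small misattribution: the global factor $2$ on the dimer side comes from Mapping~I being two-to-one (Lemma~\ref{lem:mapping6V}), while the dimer-to-6-vertex correspondence (Mapping~II) is weight-preserving thanks to the free-fermionic condition; this does not affect your argument since the multiplicity is in any case uniform in $(P,P^*)$.
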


\begin{rem}$\,$
In the paper \cite{Dubedat}, Dub\'edat relates a version of the double Ising
model and
the same bipartite dimer model in two ways. The first approach uses explicit
mappings, most of which are present in the physics literature, and
goes as follows. Consider a slightly different version of the double Ising
model, with one model living on the primal graph $G$ and the second
on the dual graph $G^*$. This double Ising model can be mapped to an
8-vertex model \cite{KadanoffWegner,Wu71} on the medial graph. Using Fan and
Wu's
abelian duality, this
8-vertex model \cite{FanWu} can be mapped to a second 8-vertex on the same
graph.
When coupling constants of the two Ising models satisfy Kramers and Wannier's
duality,
the second 8-vertex model is in fact a free-fermionic
6-vertex model. The free-fermionic 6-vertex model can in turn be mapped to a 
bipartite dimer model, a result due to \cite{WuLin} in the case of the square
lattice, and extended by \cite{Dubedat} in the general lattice case.
It can also be seen as a specific case of the mapping of the
free-fermionic 8-vertex model to a non-bipartite dimer model of \cite{FanWu}.
Note that this bipartite dimer model is the model of \emph{quadri-tilings}
studied by the second author in \cite{Bea1} and \cite{Bea2}.

When performing the different steps of the mapping, Dub\'edat keeps track of 
order/disorder variables, in the vein of
\cite{KadanoffCeva}. Using results of a previous paper of his
\cite{Dubedat:torsion}, this allows him to compute critical correlators in the
plane. For simply connected regions,
this result has independently been obtained by Chelkak, Hongler and Izyurov
\cite{ChelkakHonglerIzyurov}. 

Our goal here is different, since we aim at keeping track of
XOR-configurations. This information is not directly available in the above approach. Indeed Fan
and Wu's abelian duality for the 8-vertex model can be compared to a high-temperature expansion, 
where configurations cannot be interpreted using the
initial model. Note that expanding and recombining the identities of
\cite{Dubedat} involving correlators, one can recover 
the identity in law of Theorem~\ref{thm:intro-equal-law}; this proves the
existence of a coupling between the two models,
which we explicitly provide in this paper.

The second approach uses transformations on matrices. The partition
function of the double Ising model can be expressed using the determinant of the
Kasteleyn matrix of the Fisher graph \cite{Fisher}; whereas the partition
function of the bipartite dimer model can be expressed using the Kasteleyn
matrix of the graph $\GQ$. Dub\'edat shows that the two matrices are related
through transformations not affecting the determinant. Using the fact
that the partition function of the double Ising model is also related to the
determinant of the Kac--Ward matrix \cite{KacWard}, Cimasoni and Duminil-Copin
use the
same approach to relate the Kac--Ward matrix to the matrix of the same bipartite
dimer model \cite{CimasoniDuminil}. Their purpose is to identify the critical
point of general bi-periodic Ising models, see also Li
\cite{Li:spectral,Li:critical} for the case of the square lattice with arbitrary
fundamental domain.

However, the above transformations on matrices are not easily interpreted in terms of
transformations on configurations, and the relation to XOR-configurations
is not straightforward.

Using Nienhuis' mapping \cite{Nienhuis}, the main contribution of this paper
is to provide a coupling between the double Ising model and the bipartite dimer
model, which \emph{keeps track} of XOR loop configurations, and is valid for
graphs embedded in surfaces of genus $g$.
\end{rem}

\textbf{Section~\ref{sec:dIsing-iso}}.
    Suppose now that the two Ising models are critical and defined on the dual
of an
    infinite isoradial graph $G$ filling the whole plane, see
    Section~\ref{subsec:isoradial} for definitions. Then, the dimer model on the
    corresponding graph $\GQ$ is also critical in the dimer sense. 
    Using the locality property of both probability measures on
    Ising~\cite{BoutillierdeTiliere:iso_gen}, and dimer configurations~\cite{Bea2}
    on isoradial graphs at criticality, we prove that the equality in
    law stated in Theorem~\ref{thm:intro-equal-law} still holds in this infinite
    context. See Theorem~\ref{thm:maininfinite}.

\textbf{Section~\ref{sec:height}}. 
    The graph $\GQ$ being bipartite, using a height function denoted $h$, dimer
    configurations can naturally be interpreted as discrete random interfaces. Our
    second theorem, see also Theorem~\ref{thm:xor-level}, proves the following
    \begin{thm}\label{thm:intro-xor-level}
      XOR loop configurations of the double Ising model defined on $G^*$ have the same
      law as level lines of the restriction of the height function $h$ to vertices of
      the dual graph $G^*$.
    \end{thm}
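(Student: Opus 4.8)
The plan is to separate a purely deterministic combinatorial identity from the probabilistic input that has already been established. Precisely, I would first prove that for \emph{every} dimer configuration $M$ of $\GQ$, the primal polygon configuration $\poly_1(M)\in\P^0(G)$ coincides, as a subset of $E(G)$, with the collection of level lines of the restriction of the dimer height function $h=h_M$ to the vertices of $G^*$. Granting this, the law of the level lines of $h|_{V^*}$ under the dimer measure $\PPquadri^0$ equals the law of $\poly_1$ under $\PPquadri^0$, which by Theorem~\ref{thm:intro-equal-law} (in the finite surface case) or by Theorem~\ref{thm:maininfinite} (in the infinite isoradial case) equals the law of $\XOR$ under the double Ising measure; this is exactly the assertion of Theorem~\ref{thm:intro-xor-level}, see also Theorem~\ref{thm:xor-level}.

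To set up the combinatorial identity I would recall from Section~\ref{sec:52} that $\GQ$ is built by substituting a fixed bipartite gadget for each edge $e$ of $G$, so that the faces of $\GQ$ are indexed by the vertices of $G$, the vertices of $G^*$, and the edges of $G$. In particular every vertex $v$ of $G^*$ is a face $f_v$ of $\GQ$, and, $\GQ$ being bipartite, the height $h_M$ is the function on faces of $\GQ$ obtained from a fixed reference dimer configuration $M_0$ by adding, across each edge of $\GQ$ crossed from one face to an adjacent one, the quantity $\pm(\mathbf 1_{\{e\in M\}}-\mathbf 1_{\{e\in M_0\}})$, the sign being dictated by the white/black bipartite convention. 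By definition, a level line of $h_M|_{V^*}$ is an edge $e\in E(G)$ such that $h_M$ takes distinct values on the two faces $f_{v^+},f_{v^-}$ of $\GQ$ associated with the two endpoints $v^+,v^-$ of the dual edge $e^*$. I would also recall how $\poly_1(M)$ is read off from $M$: through the dimer/free-fermionic $6$-vertex mapping, the restriction of $M$ to the gadget of $e$ determines the local $6$-vertex configuration at the medial vertex $m_e$, and the membership $e\in\poly_1(M)$ is then governed by the local rule of Section~\ref{sec:52}.

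The core of the proof is a local computation on one gadget. Fixing $e\in E(G)$, the faces $f_{v^+}$ and $f_{v^-}$ are joined by a path in $\GQ$ that crosses only edges of the gadget of $e$, so the increment of $h_M$ from $f_{v^-}$ to $f_{v^+}$ depends only on the restriction of $M$ to that gadget. As there are finitely many such restrictions — exactly those matched, via the mapping, with the $6$ vertex types at $m_e$ and their free-fermionic refinement — one checks case by case that this increment is always either a single upward step, a single downward step, or zero, and that it is non-zero precisely when $e\in\poly_1(M)$. Since $\poly_1(M)\in\P^0(G)$ has even degree at every vertex of $G$, the set of edges on which $h_M|_{V^*}$ jumps is a disjoint union of simple loops — hence is precisely the level-line picture of $h_M|_{V^*}$ — and it equals $\poly_1(M)$; the symmetric computation shows the level lines of $h_M|_V$ are the dual polygon configuration $\poly_2(M)$, which explains why the statement singles out the restriction to $V^*$. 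The step requiring the most care is this local height computation on a gadget, carried out consistently with the white/black convention, together with matching the sign and offset conventions across the three mappings of Section~\ref{sec:52} (dimer to free-fermionic $6$-vertex to pair of non-intersecting polygon configurations); in the finite surface setting one must additionally check, using the relative-homology bookkeeping of Section~\ref{sec1}, that the level lines of the restriction to $V^*$ remain well defined. The remaining steps are routine.
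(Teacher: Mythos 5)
Your proposal is correct and takes essentially the same route as the paper: the paper also first establishes the deterministic, configuration-by-configuration identity via a local computation in each decoration (showing the height increment between the two dual-vertex faces adjacent to a primal edge is $0$ or $\pm 1$ and is non-zero exactly when that edge belongs to $\poly_1(M)$, so that the level lines of $h|_{V^*}$ are $\poly_1(M)$, cf.\ Lemma~\ref{lem:22}), and then transfers the law through the identity $\PPdising[\XOR=P]=\PPquadri[\poly_1=P]$ of Theorem~\ref{thm:maininfinite}. Your choice of the reference matching $M_0$ in place of the flow $\alpha_0$ built from edge probabilities is immaterial, as the paper itself remarks that the two height functions coincide on $V\cup V^*$ up to additive constants.
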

    
    Theorem~\ref{thm:intro-xor-level} can be interpreted as a proof of Wilson's
    conjecture mentioned above (see~\ref{sec:wilsonconj} for a precise
    statement) in the discrete setting, since it is known that the height
    function~$h$, seen as a random distribution, converges in law in the scaling
    limit to $\frac{1}{\sqrt{\pi}}$ times the Gaussian free field
    in the plane \cite{Bea2}.
    In particular, we explain the special value of the spacing.
    It would yield a complete proof of the conjecture if we could overcome the same
    technical obstacles as those of the proof of the convergence of double dimer loops to
    $\text{CLE}_4$.

\emph{Acknowledgments}: We would like to warmly thank Thierry L\'evy for very
helpful discussions on relative homology. We are also grateful to both referees for their useful comments.

\section{Ising model on graphs embedded in surfaces}\label{sec1}

In this section, we let $G$ be a graph embedded in a compact,
orientable, boundaryless surface of genus $g$ ($g\geq 0$), and $G^*$ be
its dual graph. The embedding of $G^*$ is chosen such that dual vertices
are in the interior of the corresponding faces.

Fix some integer $p\geq 0$, and suppose first that $p\geq 1$. For every $i\in\{0,\ldots,p-1\}$, let
$B_i$ be a union of closed faces of $G$ homeomorphic to a disc, such that $\forall i\neq j$, 
$B_i\cap B_j=\emptyset$. Denote by $\Sigma$ the surface of genus $g$
from which the union of the interiors of $B_i$'s is removed. Then $\Sigma$ is a compact,
orientable surface of genus $g$, with boundary $\partial \Sigma=\partial
B_0\cup\cdots \partial B_{p-1}$. When $p=0$, then $\Sigma$ is the compact, orientable, boundaryless surface of genus $g$ in which 
the graph $G$ is embedded.

Let $G_{\Sigma}=(V_{\Sigma},E_{\Sigma})$ be the subgraph of $G$ defined as
follows:
$V_{\Sigma}$ consists of vertices of $V\cap\Sigma$; and
$E_{\Sigma}$ 
consists of edges of $E$ joining vertices of $V_{\Sigma}$, from which 
edges on the boundary $\partial \Sigma$ are removed. Let
$G_{\Sigma}^*=(V_{\Sigma}^*,E_{\Sigma}^*)$ be
the subgraph of $G^*$ whose vertices are vertices of $V^*\cap\Sigma$, and whose
edges are edges of $G^*$ joining vertices of $V_{\Sigma}^*$; see Figure
\ref{fig:ising_surface_1} for an example. Note that the
graph $G_{\Sigma}^*$ contains all edges dual to edges of $G_\Sigma$,
\emph{i.e.}, 
there is a bijection between primal edges of $G_\Sigma$ and dual edges of
$G_{\Sigma}^*$. Note that when $p=0$, $G_{\Sigma}=G$ and $G_{\Sigma}^*=G^*$.

\begin{figure}[ht]
\begin{center}
\includegraphics[width=10cm]{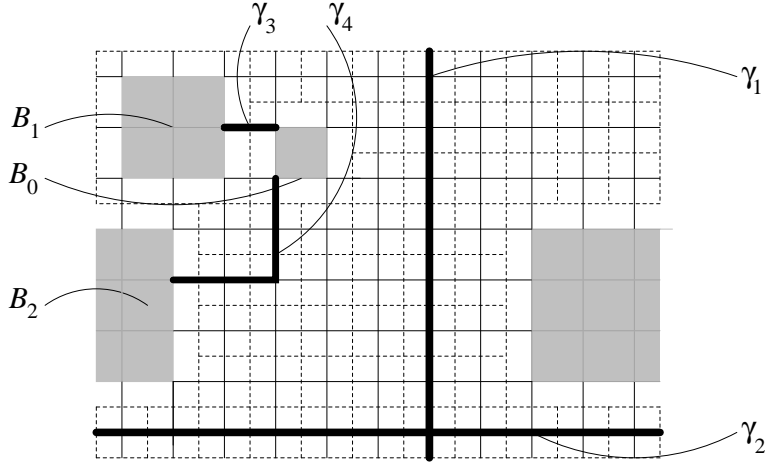} 
\caption{The graph $G$ is a piece of $\ZZ^2$
embedded in the torus. The union of faces $(B_i)_{i\in\{0,1,2\}}$ is pictured
in light grey. The graph $G_{\Sigma}$ consists of black plain lines,
and the dual graph $G_{\Sigma}^*$ of black dotted lines. The paths
$(\gamma_i)_{i=1}^4$ defining \emph{defects} of Section
\ref{sec11} are drawn in thick black lines.}\label{fig:ising_surface_1}
\end{center}
\end{figure}

Fix a collection of positive constants $(J_{e^*})_{e^*\in E^*}$ attached to
edges of $G^*$, referred to as \emph{coupling constants}. The \emph{Ising model
on $G_{\Sigma}^*$ with free boundary conditions and coupling constants $(J_{e^*})$} is defined as follows.
A \emph{spin configuration} $\sigma$ of $G_{\Sigma}^*$ is a function of the
vertices of
$V_{\Sigma}^*$ with values in $\{-1,+1\}$. The probability of occurrence of a
spin
configuration $\sigma$ is given by the \emph{Ising Boltzmann measure}, denoted
$\PPising$, and defined by:
\begin{equation*}
\forall\sigma\in \{-1,1\}^{V_{\Sigma}^*},\quad \PPising(\sigma) =
\frac{1}{\Zising(G_{\Sigma}^*,J)} \exp\left(
  \sum_{e^*=u^*v^*\in E_{\Sigma}^*}J_{e^*} \sigma_{u^*} \sigma_{v^*} \right),
\end{equation*}
where $\displaystyle
\Zising(G_{\Sigma}^*,J)=\sum_{\sigma\in\{-1,+1\}^{V_{\Sigma}^*}}\exp\left(
\sum_{e^*=u^*v^*\in E_{\Sigma}^*}J_{e^*} \sigma_{u^*} \sigma_{v^*}
\right)$ is the \emph{Ising partition function}. Note that
to simplify notation, the inverse temperature is included in the coupling
constants.

\subsection{Ising model with defect lines}\label{sec11}

When $p\geq 1$, let $N=2g+p-1$, and when $p=0$, let $N=2g$. We now define $2^N$ versions of the original Ising
model. Let $\rep{\gamma}_1,\cdots,\rep{\gamma}_N$ be $N$ unoriented paths consisting of
edges of the primal graph $G_{\Sigma}$; see Figure \ref{fig:ising_surface_1}
for an example, where 
\begin{itemize}
  \item for every $i\in\{1,\dots,g\}$, the paths ${\rep{\gamma}}_{2i-1},\rep{\gamma}_{2i}$
wind around the $i$-th handle in two transverse directions,
  \item when $p\geq 2$, for every $i\in\{1,\dots,p-1\}$, the path ${\rep \gamma}_{2g+i}$ joins
$\partial B_0$ and $\partial B_i$.
\end{itemize}
The paths $\gamma_i$'s are thought as sets of edges.
Let $\rep{\epsilon}$ be one
of the $2^N$ possible ``unions'' of paths
$\widehat{\cup}_{i\in I}\rep{\gamma}_i$, where
$I\subset\{1,\ldots,N\}$ and $\widehat{\cup}$ means that an edge with multiplicity $k$ is present 
iff $k\equiv 1 $ mod 2. Then, we change the sign of coupling
constants of dual edges intersecting with
$\rep{\epsilon}$. Spin configurations are defined as above, and so is
the probability measure on spin configurations. This defines the \emph{Ising
model with coupling constants $(J_{e^*})$ and defect condition $\rep{\epsilon}$.}

In fact, the appropriate framework for defining the Ising model with defects, is
relative homology theory, see Appendices \ref{app:A2}, \ref{app:A3}, and
\ref{app:graphs}. The \emph{first
homology group of $\Sigma$ relative to its boundary $\partial \Sigma $} is
denoted by $\relhomol{1}$. The collection of paths
$(\rep{\gamma}_1,\ldots,\rep{\gamma}_{N})$ defined above, is a representative of a basis
$\Gamma=(\gamma_1,\ldots,\gamma_{N})$ of the first relative homology group
$\relhomol{1}$ seen as a $\ZZ/2\ZZ$-vector space. In the case where $p=0$, $\partial \Sigma=\emptyset$ and 
the first homology group of $\Sigma$ relative to its boundary
 is simply the first homology group.

Let $\eps$ denote the relative homology class of $\rep{\epsilon}$ in $\relhomol{1}$.
Then, it will
be clear from the low-temperature expansion of Section~\ref{subsec:lowtemp} that
the partition function is independent of the choice of basis and of the choice
of representative of
$\eps$. As a consequence, we refer to this model as the 
\emph{Ising model with coupling constants $(J_{e^*})$ and defect condition
$\eps$}, and denote by
$\Zising^{\eps}(G_{\Sigma}^*,J)$ the corresponding partition function.
Nevertheless, since we want the change of signs of coupling constants to be well
defined throughout the paper, we fix representatives of relative homology
classes in $\relhomol{1}$, using the collection of paths
$\rep{\gamma}_1,\ldots,\rep{\gamma}_N$ defined above. Note that the original Ising model
introduced has defect
condition $\eps=0$ and $\rep{\epsilon}$ is empty.
Note also that this treatment is completely equivalent to considering the connected
components of the boundary as the boundary of marked faces, and allowing
insertion of disorder operators on these marked faces. However,
the formulation in terms of defect conditions is natural in our context:
the graphs on which the Ising model with defect conditions live, arise
from the surgery of a larger graph embedded in a surface, and as such, their
boundary have a real geometric meaning.

\subsection{Low- and high-temperature expansion}\label{subsec:lowtemp}

Proposition \ref{prop:lowtemp} below extends the \emph{low-temperature
expansion} of
Kramers and Wannier \cite{KramersWannier1,KramersWannier2} to the case of graphs
embedded
on a compact, orientable surface with boundary. It consists of rewriting the
Ising partition function as a sum over polygon configurations of the graph
$G_{\Sigma}$, ``separating''
clusters of $\pm 1$ spins; see Figure \ref{fig:low_temp} (left) for an example.

\begin{figure}[ht]
\begin{center}
\includegraphics[width=\linewidth]{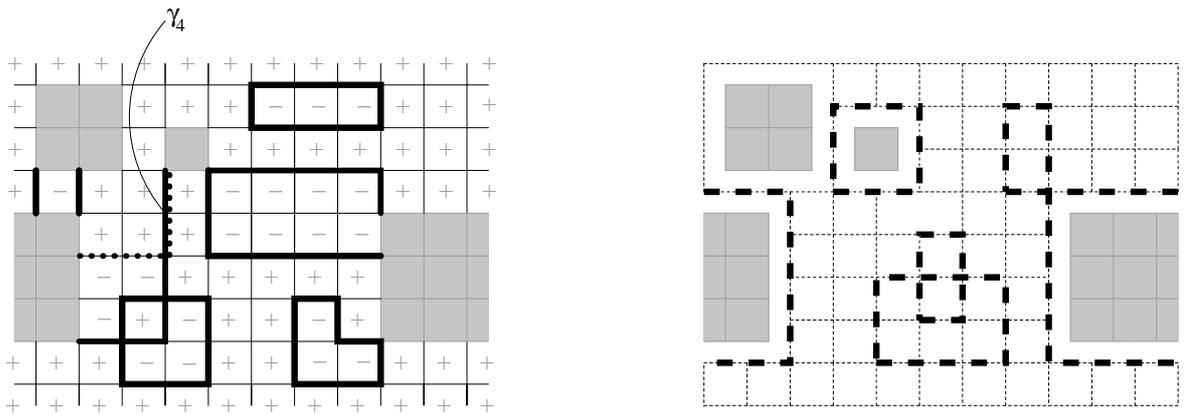} 
\caption{Left: polygon configuration of $G_{\Sigma}$ corresponding to a spin
configuration of the Ising model with defect condition $\underline{\epsilon}=\gamma_4$.
Right: polygon configuration of $G_{\Sigma}^*$.}\label{fig:low_temp}
\end{center}
\end{figure}

A \emph{polygon configuration} of $G_{\Sigma}$ is a subset of edges of
$G_\Sigma$, such
that vertices not on the boundary $\partial\Sigma$ are incident to an even
number of edges.
There is no restriction for vertices on the boundary $\partial \Sigma$. Let
us denote by
$\P(G_{\Sigma})$ the set of polygon configurations of $G_{\Sigma}$.

Let $\eps$ be an element
of $\relhomol{1}$, and let
$\P^{\eps}(G_{\Sigma})$ denote the set of polygon
configurations of $G_{\Sigma}$
whose relative homology class in $\relhomol{1}$ is $\eps$, meaning in particular that, for every $i$,
the number of edges
on $\partial B_i$ has the same parity as $\epsilon_{2g+i}$.

This
defines a partition of $\P(G_{\Sigma})$:
$$
\P(G_{\Sigma})=\bigcup_{\eps\in\relhomol{1}}\P^{\eps}(G_{\Sigma}).
$$

\begin{prop}[Low-temperature expansion]\label{prop:lowtemp}$\,$\\
For every relative homology class $\eps\in\relhomol{1}$,
\begin{equation}
\Zising^{\eps}(G_\Sigma^*,J)= 2\Bigl(\prod_{e\in E_{\Sigma}}
e^{J_{e^*}}\Bigr)
\sum_{P\in\P^{\eps}(G_{\Sigma})}
\prod_{e\in P}e^{-2J_e^*}.
\label{eq:lowtemp}
\end{equation} 
\end{prop}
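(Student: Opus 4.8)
The plan is to follow the classical Kramers--Wannier argument, adapting it to the presence of a boundary and of defect lines, with the relative homology class playing the role of a selection rule. First I would start from the definition
$$
\Zising^{\eps}(G_\Sigma^*,J)=\sum_{\sigma\in\{-1,1\}^{V_\Sigma^*}}\exp\Bigl(\sum_{e^*=u^*v^*\in E_\Sigma^*}J_{e^*}^{(\eps)}\,\sigma_{u^*}\sigma_{v^*}\Bigr),
$$
where $J_{e^*}^{(\eps)}=-J_{e^*}$ if the primal edge $e$ dual to $e^*$ lies on the fixed representative $\rep{\epsilon}$ of $\eps$, and $J_{e^*}^{(\eps)}=J_{e^*}$ otherwise. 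Writing $e^{J\sigma_u\sigma_v}=e^{J}\cdot e^{-2J}$ when $\sigma_u\neq\sigma_v$ and $e^{J}\cdot 1$ when $\sigma_u=\sigma_v$, and similarly with $J$ replaced by $-J$ on defect edges (which amounts to exchanging the roles of agreeing and disagreeing neighbours on those edges), I would factor out $\prod_{e\in E_\Sigma}e^{J_{e^*}}$. For a given $\sigma$, let $D(\sigma)\subset E_\Sigma$ be the set of primal edges whose dual endpoints carry disagreeing spins; the contribution of $\sigma$ is then $\prod_{e\in D(\sigma)}e^{-2J_{e^*}}$ times a sign-free factor once one checks that the defect flips are absorbed correctly into the relative class, i.e.\ the effective ``disagreement set'' one should track is $D(\sigma)\,\widehat{\cup}\,\rep{\epsilon}$.

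Next I would identify which edge subsets arise as $D(\sigma)\,\widehat{\cup}\,\rep{\epsilon}$, and with what multiplicity. The key geometric fact is that for a vertex $u^*\in V_\Sigma^*$ interior to $\Sigma$ (so that the dual face around $u^*$ is a closed face of $G$ not meeting $\partial\Sigma$), the edges of $D(\sigma)$ crossing the boundary of that face form an even set, since going around $u^*$ the spin must return to its starting value; vertices of $G_\Sigma$ lying on $\partial\Sigma$ correspond to faces of $G$ that are incident to a removed disc, and around such a vertex there is no parity constraint — exactly matching the definition of $\P(G_\Sigma)$. Thus $D(\sigma)$, and hence $D(\sigma)\,\widehat{\cup}\,\rep{\epsilon}$, is a polygon configuration of $G_\Sigma$; conversely, given a polygon configuration $P$, the standard argument (fix a spin at one vertex of each connected component of $\Sigma\setminus P$ and propagate) shows that there are exactly $2^{c}$ spin configurations $\sigma$ with $D(\sigma)\,\widehat{\cup}\,\rep{\epsilon}=P$ when $c$ is the number of such components — but because we work with free boundary conditions and the propagation is globally consistent precisely when the resulting $\sigma$ is single-valued, the count collapses to the familiar factor $2$ (the global spin flip), with the homology bookkeeping ensuring consistency; here I would invoke that $\rep{\epsilon}$ is a cocycle representative so that the flip across $\rep{\epsilon}$ is well defined. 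The relative homology class of $D(\sigma)\,\widehat{\cup}\,\rep{\epsilon}$ equals $\eps$: indeed $D(\sigma)$ is a relative boundary (it bounds the region where $\sigma=-1$), hence null in $\relhomol{1}$, so the class is that of $\rep{\epsilon}$, namely $\eps$. Summing over $P\in\P^{\eps}(G_\Sigma)$ with weight $\prod_{e\in P}e^{-2J_{e^*}}$ and multiplicity $2$ then gives \eqref{eq:lowtemp}.

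Finally I would also note independence of the choice of basis $\Gamma$ and of representatives: changing the representative of $\eps$ changes $\rep{\epsilon}$ by a relative boundary, which only relabels the spin configuration $\sigma\mapsto\sigma'$ (flipping spins on one side of that boundary) and leaves both the set $D(\sigma)\,\widehat{\cup}\,\rep{\epsilon}$ and the Boltzmann weight unchanged, so the right-hand side of \eqref{eq:lowtemp}, which is manifestly representative-independent, equals a representative-independent quantity on the left as well. The main obstacle I expect is the bookkeeping of the defect: one must verify carefully that flipping the sign of $J_{e^*}$ on dual edges crossing $\rep{\epsilon}$ is exactly equivalent to demanding that the disagreement set $D(\sigma)$ be completed by $\rep{\epsilon}$ before taking its homology class, and that the resulting correspondence $\sigma\leftrightarrow P$ is $2$-to-$1$ on the nose — this is where the relative-homology framework (Poincaré--Lefschetz duality, the fact that $\rep{\epsilon}$ represents a class in $\relhomol{1}$ and its Poincaré--Lefschetz dual is the relevant cocycle) does the real work, rather than the elementary spin counting, which is otherwise routine.
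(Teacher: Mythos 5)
Your proposal follows the paper's own route: the same pointwise rewriting of $e^{\pm J_{e^*}\sigma_{u^*}\sigma_{v^*}}$, with the two Kronecker symbols exchanged on dual edges crossing $\rep{\epsilon}$, so that the Boltzmann weight of $\sigma$ is carried by the set $D(\sigma)\,\widehat{\cup}\,\rep{\epsilon}$, together with the homological bookkeeping $[D(\sigma)\,\widehat{\cup}\,\rep{\epsilon}]=[D(\sigma)]+\eps=\eps$, valid because $D(\sigma)$ is a relative boundary. This is exactly the paper's argument, up to phrasing the defect switch as a symmetric difference.

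Two intermediate steps are, however, stated incorrectly and need repair. First, your ``key geometric fact'' is attached to the wrong vertices: the even-degree condition defining $\P(G_{\Sigma})$ lives at vertices of $G_{\Sigma}$ (primal vertices) not on $\partial\Sigma$, and the verification is that the faces of $G$ surrounding such a vertex $v$ carry spins forming a closed cycle, so that going once around $v$ the spin returns to its initial value and the number of edges of $D(\sigma)$ incident to $v$ is even; at a vertex on $\partial\Sigma$ this cycle of spin-carrying faces is interrupted by a removed disc, whence no constraint. As written (a dual vertex $u^*\in V_{\Sigma}^*$, ``edges of $D(\sigma)$ crossing the boundary of that face'', ``vertices of $G_{\Sigma}$ correspond to faces of $G$'') the statement does not parse, since $D(\sigma)$ consists of primal edges and its parity constraint is at primal vertices. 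Second, the counting passage is internally inconsistent: you first assert that there are exactly $2^{c}$ spin configurations with $D(\sigma)\,\widehat{\cup}\,\rep{\epsilon}=P$ and then that this ``collapses'' to $2$. The clean statement, which is what the paper uses, is that $\sigma\mapsto D(\sigma)$ is exactly $2$-to-$1$ onto the set of polygon configurations of relative class $0$: two spin configurations with the same disagreement set differ by a global flip (using that $G_{\Sigma}^*$ is connected, implicit in the paper as well), and a class-$0$ polygon configuration is a relative boundary $\partial R$, so setting $\sigma=-1$ on the faces of $R$ and $+1$ elsewhere realizes it. Applying this to $P\,\widehat{\cup}\,\rep{\epsilon}$, whose class is $0$ precisely when $P\in\P^{\eps}(G_{\Sigma})$, gives both the existence of preimages and the factor $2$, in both directions of the correspondence. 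Finally, no Poincar\'e--Lefschetz duality is needed for this proposition; the nondegenerate intersection pairing enters only in the high-temperature expansion and the orthogonality relation, while here the sole homological input is that a disagreement set is a relative boundary.
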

\begin{proof}

Suppose for the moment that $\eps$ is the class $0 \in \relhomol{1}$, so
that we deal with the usual Ising model. Using the identity
\eqref{eq:idea_low_temp} below, one can rewrite the partition function as a
statistical sum over polygon configurations separating clusters of $\pm 1$
spins: if
$\sigma_{\dual u}$ and $\sigma_{\dual v}$ are
two neighboring spins of an edge $\dual e=\dual u\dual v$, then
\begin{equation}
  e^{J_{\dual e}\sigma_{\dual u} \sigma_{\dual v}} = e^{J_{\dual e}}\left(
\delta_{\{\sigma_{\dual u} =
  \sigma_{\dual v}\}} + e^{-2J_{\dual e}} \delta_{\{\sigma_{\dual u} \neq
  \sigma_{\dual v}\}} \right).
  \label{eq:idea_low_temp}
\end{equation}
When injecting the right hand side in the expression of the Ising partition
function, the product over dual edges $\dual e$ of $e^{J_{\dual e}}$ can
be factored out. Since primal and dual edges are in bijection, this can also be
written as a product over primal edges. Then, expanding the product, we get
a product of contributions
for all edges separating two neighboring spins with opposite signs. These
edges form a polygon configuration $P^0$ of $G_{\Sigma}$ separating clusters of
$\pm 1$ spins. As a consequence $P^0$ has homology class $0$, \emph{i.e.}, $P^0$
belongs to $\P^0(G_\Sigma)$. 

Conversely,
any polygon configuration of $\P^0(G_\Sigma)$ is the boundary of
exactly two spin configurations, one obtained from the other by negating all
spins, which explains the factor 2 on the right hand side of \eqref{eq:lowtemp}. 

Suppose now that $\eps \neq 0$. In the Ising model with defect condition $\eps$,
coupling constants of edges crossing paths of the representative $\rep{\epsilon}$ are
negated. For these edges, the relation
  \eqref{eq:idea_low_temp} should be replaced by the following:

  \begin{equation*}
    e^{-J_{e^*}\sigma_{u^*} \sigma_{v^*}} =
    e^{J_{e^*}}\left( \delta_{\{\sigma_{u^*}\neq \sigma_{v^*}\}}+
e^{-2J_{e^*}}\delta_{\{\sigma_{u^*}=\sigma_{v^*}\}}
    \right).
  \end{equation*}

  Note that, when comparing to
  \eqref{eq:idea_low_temp}, the two Kronecker symbols have been exchanged. As a
consequence, the construction of polygon configurations as
  above is slightly modified:  the
  edge configuration, denoted by $P$, constructed from a spin
configuration is obtained
  from $P^0$ by switching the state of every edge $e$ in
  $\rep{\epsilon}$; see Figure \ref{fig:low_temp} (left). Then, the relative homology
class of $P$ in $\relhomol{1}$ is:
  \begin{equation*}
    [P] = [P^0] + [\rep{\epsilon}] = 0 + \eps = \eps.
  \end{equation*}
As a consequence $P$ belongs to $\P^\eps(G_\Sigma)$ and this, independently of
the choice of representative of $\eps$. Conversely, any
element of $\P^\eps(G_\Sigma)$ is obtained twice in this way.
\end{proof}

For the sequel, it is useful to introduce a symbol for the sum over
polygon configurations of the low-temperature expansion. For
$\eps\in\relhomol{1}$,
define
  \begin{equation*}
    \ZLT^{\eps}(G_\Sigma, J) = \sum_{P\in\P^{\eps}(G_\Sigma)}\Bigl(\prod_{e\in
P} e^{-2J_{\dual e}}\Bigr).
  \end{equation*}
The partition function of the Ising model with
defect condition $\eps$ can thus be rewritten as:
\begin{equation*}
  \Zising^{\eps}(G_{\Sigma}^*,J) = 2 \Bigl(\prod_{e\in E_\Sigma} e^{J_{\dual
e}} \Bigr)
  \ZLT^{\eps}(G_{\Sigma},J).
\end{equation*}

Proposition \ref{prop:hightemp} below extends the \emph{high-temperature
expansion} \cite{KramersWannier1,KramersWannier2,Wannier}
to the case of graphs embedded in a
compact, orientable surface with boundary. It consists of rewriting the
Ising partition function as a sum over polygon configurations of the graph
$G_{\Sigma}^*$, this time. In this case, polygon configurations do not have a
simple interpretation in terms of spin configurations.

A \emph{polygon configuration} of $G_{\Sigma}^*$
(or simply \emph{dual polygon configuration})
is a subset of edges such
that each vertex of $G_{\Sigma}^*$ is incident to an even number of edges, see
Figure \ref{fig:low_temp} (right) for an example. It is
thus a union of closed cycles on $\dual G_\Sigma$. Let us denote
by $\P(G_{\Sigma}^*)$ the set of polygon configurations of $G_{\Sigma}^*$.

Let $\homol{1}$ be the first homology group of $\Sigma$, see Appendices
\ref{app:A1}, \ref{app:A3} and \ref{app:graphs}.
Then, to each dual polygon configuration is
assigned its homology class in $\homol{1}$. For every $\tau\in\homol{1}$, we let
$\P^{\tau}(G_{\Sigma}^*)$ denote the set of dual polygon
configurations restricted to having homology class $\tau$ in
$\homol{1}$. This defines a partition of $\P(G_{\Sigma}^*)$:

\begin{equation*}
  \P(G_{\Sigma}^*)=\bigcup_{\tau\in\homol{1}}\P^{\tau}(G_{\Sigma}^*).
\end{equation*}

\begin{prop}[High-temperature expansion]\label{prop:hightemp} $\,$\\
For every relative homology class $\eps\in\relhomol{1}$,
\begin{equation}\label{eq:high_temp}
\Zising^{\eps}(G_{\Sigma}^*,J)=
2^{|V_{\Sigma}^*|}\Bigl(\prod_{e\in E_{\Sigma}} \cosh(J_{e^*})\Bigr)
\cdot \sum_{\tau\in\homol{1}}\Bigl[
(-1)^{(\tau|\eps)}
\sum_{P^*\in\P^{\tau}(G_{\Sigma}^*)}
\Bigl(\prod_{e^*\in P^*}\tanh(J_{e^*})\Bigr)\Bigr],
\end{equation}
where $(\tau|\eps)$ is the \emph{intersection form} evaluated at $\tau$ and
$\eps$: it is the parity of the number of intersections of
any representative of $\tau$ and any representative
of $\eps$.
\end{prop}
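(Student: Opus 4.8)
The plan is to derive the high-temperature expansion \eqref{eq:high_temp} from the low-temperature expansion \eqref{eq:lowtemp} by applying a discrete Fourier transform (Poisson summation over $\ZZ/2\ZZ$), the classical Kramers--Wannier trick, while carefully tracking the homological information coming from the defect condition $\eps$. First I would start from the definition of $\Zising^{\eps}(G_\Sigma^*,J)$ as a sum over spin configurations $\sigma\in\{-1,1\}^{V_\Sigma^*}$, where coupling constants of edges crossing a fixed representative $\rep{\epsilon}$ of $\eps$ have been negated. Writing $s_{e^*}=\pm1$ for the sign carried by edge $e^*$ (so $s_{e^*}=-1$ exactly when $e^*$ crosses $\rep\epsilon$), each Boltzmann weight factors as
\begin{equation*}
e^{s_{e^*}J_{e^*}\sigma_{u^*}\sigma_{v^*}}=\cosh(J_{e^*})\bigl(1+s_{e^*}\tanh(J_{e^*})\,\sigma_{u^*}\sigma_{v^*}\bigr).
\end{equation*}
Pulling out $\prod_{e^*\in E_\Sigma^*}\cosh(J_{e^*})=\prod_{e\in E_\Sigma}\cosh(J_{e^*})$ (using the primal/dual edge bijection), expanding the product over edges, and summing over $\sigma$, the only subsets $P^*\subseteq E_\Sigma^*$ that survive are those for which every vertex of $G_\Sigma^*$ is incident to an even number of edges of $P^*$ — i.e.\ exactly the dual polygon configurations $\P(G_\Sigma^*)$. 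Each such $P^*$ contributes $2^{|V_\Sigma^*|}\prod_{e^*\in P^*}\tanh(J_{e^*})$ times the sign $\prod_{e^*\in P^*}s_{e^*}=(-1)^{\#(P^*\cap\rep\epsilon)}$.

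The next step is to recognize that the sign $(-1)^{\#(P^*\cap\rep\epsilon)}$ is precisely the intersection form $(-1)^{(\tau|\eps)}$ where $\tau=[P^*]\in\homol{1}$. A dual polygon configuration is a $\ZZ/2\ZZ$-cycle on $G_\Sigma^*$, hence carries a well-defined homology class $\tau$ in $\homol{1}$; similarly $\rep\epsilon$ represents the relative class $\eps\in\relhomol{1}$. The number of crossings mod $2$ between a representative cycle of $\tau$ and a representative relative cycle of $\eps$ depends only on the classes $\tau$ and $\eps$ — this is exactly the content of the Poincar\'e--Lefschetz duality pairing between $\homol{1}$ and $\relhomol{1}$, as developed in the appendices. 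So for all $P^*\in\P^\tau(G_\Sigma^*)$ the sign is the constant $(-1)^{(\tau|\eps)}$, and grouping dual polygon configurations by their homology class and summing over $\tau\in\homol{1}$ yields exactly the right-hand side of \eqref{eq:high_temp}. Finally, I would note that the identity holds for every $\eps$, and in particular it does not depend on the choice of representative $\rep\epsilon$ nor on the basis $\Gamma$, since the left-hand side does not (by the remark following Proposition~\ref{prop:lowtemp}) and the right-hand side is manifestly expressed through homology classes only.

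The main obstacle is the homological bookkeeping in the middle step: establishing rigorously that $\#(P^*\cap\rep\epsilon)\bmod 2$ equals the intersection pairing $(\tau|\eps)$ when $\Sigma$ has boundary. On a closed surface this is the standard intersection form on $\homol{1}$, but here one of the two classes lives in the relative homology group $\relhomol{1}$ and the pairing is the Poincar\'e--Lefschetz one between $H_1(\Sigma,\partial\Sigma)$ and $H_1(\Sigma)$; one must check that crossings on the boundary (where dual polygon configurations are unconstrained but primal representatives of $\eps$ may have endpoints) do not spoil the mod-$2$ count, and that the pairing is well-defined independently of representatives. This is precisely why the paper invokes relative homology theory and Poincar\'e--Lefschetz duality, and why it thanks Thierry L\'evy; I would isolate this as a lemma proven in the appendices (Appendices \ref{app:A2}, \ref{app:A3}, \ref{app:graphs}) and simply cite it here. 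The remaining steps — the algebraic expansion, factoring out the $\cosh$ and the $2^{|V_\Sigma^*|}$, and the vanishing of non-even subsets after summing over spins — are entirely routine.
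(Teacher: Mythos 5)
Your proposal is correct and follows essentially the same route as the paper's proof: the same factorization $e^{\pm J_{e^*}\sigma_{u^*}\sigma_{v^*}}=\cosh (J_{e^*})\bigl(1\pm\tanh (J_{e^*})\,\sigma_{u^*}\sigma_{v^*}\bigr)$, the same expansion and spin-summation parity argument yielding the factor $2^{|V_{\Sigma}^*|}$ and the even subgraphs, and the same identification of the sign $(-1)^{\#(P^*\cap\,\rep{\epsilon})}$ with $(-1)^{(\tau|\eps)}$, with the well-definedness of this pairing delegated to the intersection-form appendix exactly as the paper does. Only your opening framing is off: you do not actually derive \eqref{eq:high_temp} from the low-temperature expansion via a $\ZZ/2\ZZ$-Fourier/Poisson argument — what you then carry out (and what works) is the direct high-temperature expansion from the definition of $\Zising^{\eps}(G_{\Sigma}^*,J)$, which is precisely the paper's argument.
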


For details on the intersection form, see Appendix \ref{app:intersection}. 

\begin{proof}
  This result is based on yet another way of rewriting the quantity $e^{\pm
J_{\dual  e}\sigma_{\dual u}\sigma_{\dual v}}$ for a dual edge
$e^*=u^*v^*$ of $E_{\Sigma}^*$. 
\begin{align}
    \label{eq:idea_high_temp}
    e^{\pm J_{e^*} \sigma_{u^*}\sigma_{v^*}}&= 
    \cosh J_{e^*}\pm \sigma_{u^*}\sigma_{v^*}\sinh J_{e^*}\nonumber\\ 
    &=\cosh J_{e^*} \left(1\pm \sigma_{u^*}\sigma_{v^*}\tanh J_{e^*} \right).
  \end{align}
The partition function is expanded into a sum of monomials in 
  $(\sigma_{u^*})_{u^*\in V_{\Sigma}^*}$. In the expansion, the spin variables
  come by pairs of neighbors $\sigma_{\dual u} \sigma_{\dual v}$ and thus can be
  formally identified with the dual edge connecting $\dual u$ and $\dual v$,
  associated with a weight $\pm\tanh J_{e^*}$.
  Each monomial is then interpreted as a subgraph of $G_{\Sigma}^*$,
  the degree of $\sigma_{u^*}$ being the degree of $u^*$ in the corresponding edge
  configuration.
  Because of the symmetry $\sigma\leftrightarrow -\sigma$, when re-summing over spin
  configurations $\sigma$, only terms having even degree in each variable
  remain, giving a factor 2 per dual vertex, and other contributions cancel.
  As a consequence, the contributing monomials
  correspond to even subgraphs \emph{i.e.}, polygon configurations of
  $\P(G_{\Sigma}^*)$.
 
  We now determine the sign of dual polygon configurations.
  Fix $\tau\in\homol{1}$ and a dual
  polygon configuration $P^*\in\P^{\tau}(G_{\Sigma}^*)$. Then,
 edges of $P^*$ carrying a negative weight are exactly those crossing
  edges of $\rep{\epsilon}$. As a consequence, the sign of the contribution of $P^*$
corresponds
to $(-1)$ to the parity of the number of edges of $P^*$ intersecting with
$\rep{\epsilon}$, this is exactly given by $(\tau|\eps)$. The dual polygon configuration
$P^*$ thus has sign $(-1)^{(\tau|\eps)}$.
\end{proof}

As in the case of the low-temperature expansion, it is useful to introduce a
notation for
the sum over dual polygon configurations of the high-temperature expansion. For
$\tau\in \homol{1}$, define:
\begin{equation*}
  \ZHT^{\tau}(G_\Sigma^*,J) = \sum_{P^*\in\P^{\tau}(G_{\Sigma}^*)}
\Bigl(\prod_{e^*\in P^*}\tanh(J_{e^*})\Bigr).
\end{equation*}

The relation between \eqref{eq:lowtemp} and \eqref{eq:high_temp} 
can then be rewritten in the following compact form. For every
relative homology class $\eps\in\relhomol{1}$:
\begin{equation}\label{eq:lowhighexp}
  \ZLT^{\eps}(G_{\Sigma},J) =   2^{|\dual{V}_{\Sigma}|-1}
  \left(\prod_{e\in E_\Sigma} \frac{\cosh(J_{\dual  e})}{e^{J_{\dual e}}}\right)
\sum_{\tau\in\homol{1}}\Bigl[(-1)^{(\tau|\eps)}\ZHT^{\tau}(G_{\Sigma}^*,J)\Bigr].
\end{equation}

\begin{rem}
Relation \eqref{eq:lowhighexp} can be inverted using the orthogonality identity:
\begin{equation}
  \sum_{\eps\in\relhomol{1}}(-1)^{(\tau|\eps)}
  (-1)^{(\tau'|\eps)}= 2^N \delta_{\tau,\tau'},
  \label{eq:orthogonality}
\end{equation}
where $N=2g+p-1$ when $p\geq 1$, and $N=2g$ when $p=0$. This orthogonality relation is proved as follows.
The summand can be rewritten as $(-1)^{(\tau-\tau'|\epsilon)}$.
The application $\epsilon\mapsto (-1)^{(\tau-\tau'|\epsilon)}$ is a group
homomorphism from $\relhomol{1}$ to $\ZZ/2\ZZ$.
When $\tau=\tau'$, this application is constant, equal to 1,
all terms in the sum \eqref{eq:orthogonality} equal 1, and the total sum equals~$2^N$.
Otherwise, since the intersection pairing is non-degenerate (see
Appendix~\ref{app:intersection}), the application
$\epsilon\mapsto (-1)^{(\tau-\tau'|\epsilon)}$
takes the values 1 and -1 the same number of times, and the sum
\eqref{eq:orthogonality} is zero.
Using this identity, we obtain the inverted version of relation \eqref{eq:lowhighexp}:
\begin{equation*}
  \ZHT^{\tau}(\dual G_\Sigma, J) = 2^{-N-|\dual{V}_\Sigma|+1} 
  \left(\prod_{e\in E_\Sigma} \frac{e^{J_{\dual e}}}{\cosh(J_{\dual e})}\right)
  \sum_{\epsilon\in\relhomol{1}} \Bigl[ (-1)^{(\tau|\eps)}
    \ZLT^{\epsilon}(G_{\Sigma},J)\Bigr].
\end{equation*}

\end{rem}

\section{Double Ising model on a boundaryless surface of genus $g$}\label{sec:2}

In this section, we let $G$ be a graph embedded in a
compact, orientable, boundaryless surface
$\Sigma$ of genus $g$, and $G^*$ denote its dual graph. Since $\Sigma$
has no boundary, the first homology group
$\relhomol{1}$ of $\Sigma$ relative to its boundary is identified with the
first homology group $H_1(\Sigma,\ZZ/2\ZZ)$.

Instead of one Ising model on $\dual G$, we now consider two copies
of the Ising model, say a red one and a blue one, with the same coupling
constants $(J_{\dual e})$. These two models are not taken to be completely
independent: we require that they have the same defect conditions,
\emph{i.e.}, we ask that polygon configurations coming from the low-temperature
expansion of both spin configurations have the same homology class.

More precisely, from the point of view of the low-temperature expansion, 
we are interested in the probability measure $\PPdising$, on 
$
\Pdising:=\bigcup_{\eps\in\homol{1}}\P^{\eps}(G)\times\P^{\eps}(G),
$
defined by, for every $(P_{\text{red}},P_{\text{blue}})\in\Pdising$:
\begin{equation*}
\PPdising(P_{\text{red}},P_{\text{blue}}) =
  \frac{\C 
\Bigl(\prod_{e\in P_{\text{red}}}e^{-2J_{\dual e}} \Bigr)
  \Bigl( \prod_{e\in P_{\text{blue}}}e^{-2J_{\dual e}}
\Bigr)}{\Zdising(G^*,J)},
  \label{eq:doubleLT}
\end{equation*}
where $\C=\left( 2 \prod_{e\in E} e^{J_{\dual e}} \right)^2$, and
the partition function $\Zdising(G^*,J)$ is given by:
\begin{align*}
\Zdising(G^*,J)&=\sum_{\eps\in\homol{1}}\sum_{(P_{\text{red}},P_{\text{blue}}
)\in\P^ {
\eps}(G)\times\P^{\eps}(G)}\C\Bigl(\prod_{e\in P_{\text{red}}}e^{-2J_{\dual e}}
\Bigr)
  \Bigl( \prod_{e\in P_{\text{blue}}}e^{-2J_{\dual e}} \Bigr)\\
&=\sum_{\eps\in\homol{1}} (\Zising^\eps(J))^2.
\end{align*}

Given a pair $(P_{\text{red}},P_{\text{blue}})\in\Pdising$, and looking at the
superimposition $P_{\text{red}}\cup P_{\text{blue}}$ on $G$, one defines two new
edge configurations: 
\begin{itemize}
  \item $\mono(P_{\text{red}},P_{\text{blue}})$: consisting of monochromatic
edges of the superimposition $P_{\text{red}}\cup P_{\text{blue}}$, \emph{i.e.},
edges covered by exactly one of the polygon configuration;
  \item $\bi(P_{\text{red}},P_{\text{blue}})$: consisting of bichromatic edges
of the superimposition,
\emph{i.e.}, edges covered by both polygon configurations.
\end{itemize}
Edges which are not in the two configurations above are covered neither by
$P_{\text{blue}}$ nor by $P_{\text{red}}$. In Sections~\ref{sec:mono}
and~\ref{sec:bi} below, we characterize these two
sets of edges.

\subsection{Monochromatic edges}\label{sec:mono}

Let $\eps\in \homol{1}$, and consider a pair of polygon
configurations
$(P_{\text{red}},P_{\text{blue}})$ in $\P^{\eps}(G)\times\P^{\eps}(G)$. Then, it
can be realized as four pairs of Ising spin configurations
$(\pm\sigma,\pm\sigma')$, each with defect type $\eps$, where coupling constants
are negated along the representative $\rep{\epsilon}$ of $\eps$, chosen in Section
\ref{sec11}.

Following Wilson \cite{WilsonXOR}, to each of the four pairs of spin
configurations, one
assigns an XOR-\emph{spin configuration} defined as follows: at every vertex,
the XOR-spin is the product of the Ising-spins at that same vertex.

Note that the four pairs of spin configurations yield two distinct XOR-spin
configurations, one being obtained from the other by negating all spins. As a
consequence, both XOR-spin configurations have the same polygon configuration
separating clusters of $\pm 1$ spins, meaning that this polygon
configuration is independent of the choice of $(\pm\sigma,\pm\sigma')$
realizing $(P_{\text{red}},P_{\text{blue}})$, let us denote it by
$\XOR(P_{\text{red}},P_{\text{blue}})$. Note also, that
although the definition of $\sigma$
and $\sigma'$ depends on the particular
choice of representative $\rep{\epsilon}$, the XOR polygon configuration does
not: it is defined intrinsically from $(P_{\text{red}},P_{\text{blue}})$.

\begin{lem}\label{lem:mono}
For every pair of polygon configurations $(P_{\text{red}},P_{\text{blue}})\in\mathfrak{P}$,
the monochromatic edge configuration $\mono(P_{\text{red}},P_{\text{blue}})$
is exactly the XOR loop configuration $\XOR(P_{\text{red}},P_{\text{blue}})$. 
In particular, it is a polygon configuration of
$\P^{0}(G)$.
\end{lem}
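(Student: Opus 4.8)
The plan is to work locally, edge by edge, and translate the superimposition picture into spin language. Recall that $(P_{\text{red}}, P_{\text{blue}})$ can be realized by spin configurations $(\sigma, \sigma')$ with defect $\underline{\epsilon}$, so that an edge $e = u^*v^*$ of $G$ (dual to $e^* = u^*v^*$ of $G^*$) lies in $P_{\text{red}}$ precisely when $\sigma_{u^*}\sigma_{v^*}$ disagrees with the defect pattern at $e^*$, and likewise for $P_{\text{blue}}$ with $\sigma'$. The first step is to record, for each edge $e^* = u^*v^*$, the four Boolean quantities $[\sigma_{u^*} \ne \sigma_{v^*}]$, $[\sigma'_{u^*} \ne \sigma'_{v^*}]$, $[e \in P_{\text{red}}]$, $[e \in P_{\text{blue}}]$, and observe that the defect correction is \emph{the same} for both models (same $\underline{\epsilon}$), so $[e \in P_{\text{red}}] \oplus [e \in P_{\text{blue}}] = [\sigma_{u^*} \ne \sigma_{v^*}] \oplus [\sigma'_{u^*} \ne \sigma'_{v^*}]$, with the defect contribution cancelling. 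The left side is exactly the indicator that $e$ is monochromatic; so $e \in \mono(P_{\text{red}}, P_{\text{blue}})$ iff $\sigma$ and $\sigma'$ disagree on \emph{exactly one} of the endpoints' spin-agreement patterns.

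The second step is to rewrite that last condition in terms of the XOR-spins $\eta_{w^*} := \sigma_{w^*}\sigma'_{w^*}$. A direct check on the $2^4$ sign patterns (or the identity $\sigma_{u^*}\sigma_{v^*} \sigma'_{u^*}\sigma'_{v^*} = \eta_{u^*}\eta_{v^*}$) shows that $[\sigma_{u^*} \ne \sigma_{v^*}] \oplus [\sigma'_{u^*} \ne \sigma'_{v^*}] = [\eta_{u^*} \ne \eta_{v^*}]$. Hence $e \in \mono(P_{\text{red}}, P_{\text{blue}})$ iff the XOR-spins $\eta$ disagree across $e^*$, i.e., iff $e$ belongs to the polygon configuration separating clusters of $\pm 1$ spins of $\eta$. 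Since $\eta$ is (one of the two choices of) the XOR-spin configuration and the separating polygon configuration is, by the discussion preceding the lemma, exactly $\XOR(P_{\text{red}}, P_{\text{blue}})$ and independent of the choice of realizing spins, this gives $\mono(P_{\text{red}}, P_{\text{blue}}) = \XOR(P_{\text{red}}, P_{\text{blue}})$.

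Finally, for the last sentence: $\XOR(P_{\text{red}}, P_{\text{blue}})$ is the set of edges dual to edges of $G^*$ across which a \emph{genuine} spin configuration (namely $\eta$, with no defects) changes sign, hence it is the low-temperature polygon configuration of an honest Ising model on $G^*$. By the reasoning in the proof of Proposition \ref{prop:lowtemp} (the $\eps = 0$ case), such a configuration separates clusters of $\pm 1$ spins and therefore has trivial homology class, so it lies in $\P^0(G)$. Equivalently, one can note that every vertex of $G$ not on the (empty) boundary is incident to an even number of sign-changes around it, and that $\eta$ being globally defined forces the class to be $0$.

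I expect the only genuinely delicate point to be the bookkeeping with the defect representative $\underline{\epsilon}$: one must be careful that the sign changes of coupling constants along $\underline{\epsilon}$ affect \emph{both} copies identically and therefore drop out of the XOR, so that the resulting XOR-spin configuration and its separating loops are defect-free and intrinsically defined. Everything else is a finite case check on sign patterns around a single edge, which I would not spell out in detail.
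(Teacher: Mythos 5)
Your proposal is correct and follows essentially the same route as the paper: an edge-by-edge check that monochromaticity of $e$ is equivalent to the XOR-spins disagreeing across $e^*$, followed by the observation that the XOR loops bound clusters of a genuine (defect-free) spin configuration and hence lie in $\P^0(G)$. The only cosmetic difference is that you handle the defect representative $\rep{\epsilon}$ by a mod-2 cancellation (the correction appears identically in both copies), whereas the paper spells out the same fact as a case split on whether $e$ belongs to $\rep{\epsilon}$, with the roles of the two colors exchanged on defect edges.
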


\begin{proof}
Fix a pair of red and blue
polygon configurations$(P_{\text{red}},P_{\text{blue}})\in P^{\eps}(G)\times\P^{\eps}(G)$ for some
$\eps\in\homol{1}$. Let $(\sigma,
\sigma')$ be one of the four pairs of spin configurations whose low-temperature
expansion is $(P_{\text{red}},P_{\text{blue}})$. We need to show that, for every
edge $e$ of $G$, $e$ is monochromatic, if and only
if XOR-spins at vertices $u^*$, $v^*$ of the dual edge $e^*$ are distinct. 
Suppose that $e$ does not belong to $\rep{\epsilon}$. Then, 
\begin{align*}
&\text{- the edge } e \text{ is red only} \Leftrightarrow\ \sigma_{\dual u} \neq \sigma_{\dual v} \quad \text{and}\quad  \sigma'_{\dual u
} = \sigma'_{\dual v},\\
&\text{- the edge } e \text{ is blue only} \Leftrightarrow\ \sigma_{\dual u} = \sigma_{\dual v} \quad \text{and} \quad \sigma'_{\dual u }
\neq
  \sigma'_{\dual v}.
\end{align*}
If $e$ belongs to $\rep{\epsilon}$, the two above conditions hold with colors exchanged.
In all cases, $e$ is monochromatic if and only
if XOR spins at vertices $u^*$ and $v^*$ are distinct.

Being the boundary of some domain, the set of monochromatic edges must be a
polygon configuration of $G$, with homology class $0$.
\end{proof}

\subsection{Bichromatic edge configurations}\label{sec:bi}

Before describing features of bichromatic edge configurations, we recall some
general facts. A polygon configuration $P$
of the graph $G$ separates the surface~$\Sigma$ into $n_P$ connected components
$\Sigma_1,\ldots,\Sigma_{n_P}$, where $n_P\geq 1$. For every
$i\in\{1,\ldots,n_P\}$, $\Sigma_i$ is a surface of genus $g_i$ with boundary
$\partial\Sigma_i$. The boundary is either empty or consists of cycles
of~$\Sigma$.

As in Section~\ref{sec1}, $G_{\Sigma_i}$ denotes the subgraph of $G$, whose
vertex set $V_{\Sigma_i}$ is $V\cap\Sigma_i$, and whose edge set $E_{\Sigma_i}$
consists of edges of $E$ joining vertices of $V_{\Sigma_i}$, from which edges
on the boundary $\partial\Sigma_i$ are removed. The dual graph is denoted by
$G_{\Sigma_i}^*$.

Recall that $\relhomol[\Sigma_i]{1}$ denotes the first
homology group of $\Sigma_i$ relative to its boundary. Consider the morphism 
$\Pi_i=\Pi_{\Sigma,\Sigma_i}$, from $\homol[\Sigma]{1}$ to $\relhomol[\Sigma_i]{1}$ defined
as follows:
for every $\eps\in \homol[\Sigma]{1}$,
$\Pi_{i}(\eps)$ is the homology class in $\relhomol[\Sigma_i]{1}$
of the restriction of any representative $\rep{\epsilon}$ of $\eps$ to $\Sigma_i$, see
Appendix~\ref{app:A6} for details. 

The following lemma characterizes bichromatic edge configurations.

\begin{lem}\label{lem:bi}
Fix $\eps\in\homol[\Sigma]{1}$, and let $P\in\P^0(G)$ be a polygon
configuration, separating the surface $\Sigma$ into connected components
$\Sigma_1,\ldots,\Sigma_{n_P}$.
\begin{itemize}
\item If there exists a pair of polygon configurations
$(P_{\text{red}},P_{\text{blue}})\in\P^{\eps}(G)\times \P^{\eps}(G)$ such that
$\mono(P_{\text{red}},P_{\text{blue}})=P$; then, for every
$i\in\{1,\ldots,n_P\}$, the restriction of bichromatic edges to $G_{\Sigma_i}$
is the low-temperature expansion of
an Ising configuration on $G_{\Sigma_i}^*$, with coupling constants
$(2J_{e^*})$ and
defect condition $\Pi_{i}(\eps)$. As a consequence, it is a polygon
configuration in $\P^{\Pi_{i}(\eps)}(G_{\Sigma_i})$.
\item Given, for every $i\in\{1,\ldots,n_P\}$, a polygon configuration
$P_i\in\P^{\Pi_{i}(\eps)}(G_{\Sigma_i})$, there are $ 2^{n_P-1}$ pairs
$(P_{\text{red}},P_{\text{blue}})\in\P^{\eps}(G)\times \P^{\eps}(G)$ such that
$\mono(P_{\text{red}},P_{\text{blue}})=P$ and such that, for every
$i\in\{1,\ldots,n_P\}$, the restriction of bichromatic edges to $G_{\Sigma_i}$
is~$P_i$.
\end{itemize}
\end{lem}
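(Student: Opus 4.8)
The plan is to represent each pair of polygon configurations by an underlying pair of Ising spin configurations and then to apply the single‑Ising low‑temperature expansion of Proposition~\ref{prop:lowtemp} inside each connected component $\Sigma_i$ separately. So first I would fix one of the four pairs $(\sigma,\sigma')$ of spin configurations on $G^*$ with defect condition $\eps$ (coupling constants negated along the representative $\rep{\epsilon}$ of $\eps$ chosen in Section~\ref{sec11}) whose low‑temperature expansion is $(P_{\text{red}},P_{\text{blue}})$. By Lemma~\ref{lem:mono}, $\mono(P_{\text{red}},P_{\text{blue}})=P$ is exactly the set of edges across which the XOR‑spin $u^*\mapsto\sigma_{u^*}\sigma'_{u^*}$ jumps; since the dual vertices lie in the interiors of faces, hence in the open complementary regions of $P$, this XOR‑spin is constant on each $\Sigma_i$, equal to some sign which I call $\tau^{(i)}$, i.e. $\sigma'=\tau^{(i)}\sigma$ on $V_{\Sigma_i}^*$. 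In particular $V^*$ is the disjoint union of the $V_{\Sigma_i}^*$.

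For the first bullet I would then plug $\sigma'=\tau^{(i)}\sigma$ into the definition of bichromatic edges and compare with the construction of polygon configurations in the proof of Proposition~\ref{prop:lowtemp}, distinguishing the edges of $G_{\Sigma_i}$ lying on $\rep{\epsilon}$ from the others. One sees directly that, inside $\Sigma_i$, the bichromatic edges are precisely the low‑temperature polygon configuration produced by the single spin configuration $\sigma|_{V_{\Sigma_i}^*}$ on $G_{\Sigma_i}^*$, for the defect represented by the restriction of $\rep{\epsilon}$ to $G_{\Sigma_i}$, which represents $\Pi_i(\eps)$ by the very definition of $\Pi_i$ (Appendix~\ref{app:A6}). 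Since a bichromatic edge carries weight $e^{-4J_{e^*}}=e^{-2(2J_{e^*})}$, the matching single‑Ising model has doubled coupling constants; membership of this restriction in $\P^{\Pi_i(\eps)}(G_{\Sigma_i})$ is then immediate.

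For the count in the second bullet I would reverse this construction. By Proposition~\ref{prop:lowtemp} on each $\Sigma_i$, the prescribed $P_i\in\P^{\Pi_i(\eps)}(G_{\Sigma_i})$ is the low‑temperature polygon configuration of exactly two spin configurations $\pm\rho^{(i)}$ on $V_{\Sigma_i}^*$. Any admissible $(\sigma,\sigma')$ must therefore satisfy $\sigma|_{V_{\Sigma_i}^*}=s_i\rho^{(i)}$ and, by the first paragraph applied in reverse, $\sigma'|_{V_{\Sigma_i}^*}=\tau^{(i)}s_i\rho^{(i)}$ for signs $(s_i),(\tau^{(i)})\in\{-1,1\}^{n_P}$; conversely any such pair, glued into configurations on $V^*$, has the right $\bi$ restricted to each $G_{\Sigma_i}$ and has $P_{\text{red}},P_{\text{blue}}$ of homology class $\eps$ (low‑temperature expansion on all of $G$), and it satisfies $\mono=P$ exactly when $\tau^{(i)}\neq\tau^{(j)}$ for every edge of $P$ between $\Sigma_i$ and $\Sigma_j$. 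The last condition says $(\tau^{(i)})$ is a proper $2$‑colouring of the graph $H$ with vertex set $\{\Sigma_1,\dots,\Sigma_{n_P}\}$ and an edge whenever two components share a boundary edge of $P$. Here I would use $[P]=0$ on the boundaryless surface to write $P=\partial D$ for a union of faces $D$: the $\ZZ/2$‑indicator of $D$ is constant on each $\Sigma_i$ and changes across every shared boundary edge, so $H$ is bipartite (and no edge of $P$ borders a single component on both sides). Also $H$ is connected: a set of components with no outgoing $P$‑edge would have closed union with empty topological boundary, hence be a union of connected components of the connected surface $\Sigma$, hence everything. A connected bipartite graph has exactly $2$ proper $2$‑colourings, so there are $2^{n_P}\cdot 2$ admissible pairs $(\sigma,\sigma')$; since $(P_{\text{red}},P_{\text{blue}})$ recovers $\sigma$ and $\sigma'$ each only up to an independent global sign, each of the sought pairs comes from exactly $4$ admissible $(\sigma,\sigma')$, giving $2^{n_P+1}/4=2^{n_P-1}$.

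The edge‑by‑edge verification of the first bullet is routine once $\sigma'=\tau^{(i)}\sigma$ on $\Sigma_i$ is in hand. The delicate point is the counting: producing the factor $2^{n_P-1}$ exactly requires the three inputs that the region‑adjacency graph $H$ is connected, that it is bipartite (so that only $2$, not $2^{n_P}$, XOR‑sign patterns are admissible), and that the four sign ambiguities $(\pm\sigma,\pm\sigma')$ act freely and account for precisely the overcount. Each of these rests on carefully analysing the complementary regions of $P$ on a surface of genus $g$, which I expect to be the main obstacle.
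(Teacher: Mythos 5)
Your proof is correct and follows essentially the same route as the paper's: lift the pair of polygon configurations to spin configurations, use that the XOR spin is constant on each component $\Sigma_i$ to identify the bichromatic edges inside $\Sigma_i$ with a low-temperature expansion with doubled couplings and defect $\Pi_{i}(\eps)$, then count admissible spin pairs ($2\cdot 2^{n_P}$) and divide by the $4$ global sign choices. The only difference is that where the paper gets the two admissible global XOR sign patterns directly from Proposition~\ref{prop:lowtemp} (the two spin configurations $\pm\xi$ whose low-temperature expansion is $P$), you re-derive this fact via connectedness and bipartiteness of the component-adjacency graph, an equivalent if slightly longer detour.
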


\begin{proof}$\,$
\begin{itemize}
 \item Suppose that there exists a pair of polygon configurations
$(P_{\text{red}},P_{\text{blue}})$ of
$\P^{\eps}(G)\times \P^{\eps}(G)$ such
that $\mono(P_{\text{red}},P_{\text{blue}})=P$. Then, for every 
$i\in\{1,\ldots,n_P\}$, the restriction of bichromatic edges to $G_{\Sigma_i}$
exactly consists of the restriction to $G_{\Sigma_i}$ of one of two original
polygon configurations. Since this polygon configuration has homology
$\eps$ in
$\Sigma$, the homology class in $\relhomol[\Sigma_i]{1}$ of the restriction to
$\Sigma_i$ is
$\Pi_{i}(\epsilon)$ by definition. As a consequence, the bichromatic
edge configuration on $\Sigma_i$ is a polygon
configuration of $\P^{\Pi_{i}(\eps)}(G_{\Sigma_i})$.
Moreover, since all edges in the bichromatic configuration are present twice,
and since the weight of pairs of polygon configurations is the product of the
edge-weights contained in the pair of configurations, the effective weight of a
bichromatic edge $e$ is squared and becomes:
\begin{equation*}
  \left( e^{-2J_{\dual e}} \right)^2 = e^{-2 (2 J_{\dual e})},
\end{equation*}
which corresponds to a doubling of the coupling constants.
\item There are two spin
configurations, denoted by $\pm \xi$, whose low-temperature expansion is $P$.
Suppose that there exists
a pair
of spin configurations $(\sigma,\sigma')$ whose low-temperature expansion 
has $P$ as monochromatic edges,
then $\sigma\sigma'=\pm\xi$. Let us assume $\sigma\sigma'=\xi$, the
argument being similar in the other case, this has the effect of adding a global
factor 2 when speaking of spin configurations. The relation $\sigma\sigma'=\xi$
implies that there is freedom of choice
for exactly one spin configuration, say $\sigma$, the other being determined
by their product $\xi$. 

Consider a connected component
$\Sigma_i$, and a polygon configuration
$P_i\in\P^{\Pi_{i}(\eps)}(G_{\Sigma_i})$. We want $P_i$ to consist of
doubled edges, so that in particular, it must contain all red edges. There
are thus two
choices for the first spin configuration of $G_{\Sigma_i}^*$, denoted by
$\pm\sigma^i$. This holds for every $i\in\{1,\ldots,n_P\}$ and thus defines
$2^{n_P}$ spin configurations
$(\pm\sigma^1,\ldots,\pm\sigma^{n_P})$ of $G^*$. Recall that in
each of the $2^{n_P}$ cases, the second spin configuration is determined by the
condition $\sigma\sigma'=\xi$. Since on each connected
component~$\Sigma_i$, $\xi$ is identically equal to $\pm 1$, we deduce that
$(\sigma')^i=\pm\sigma^i$. As a consequence, the low-temperature expansion
of $\sigma'$
exactly consists of edges of $P_i$, \emph{i.e.}, $P_i$ consists of red
and blue edges. Summarizing, there are $2\cdot 2^{n_P}$ pairs of spin
configurations, or $2^{n_P-1}$ pairs of polygon configurations
$(P_{\text{red}},P_{\text{blue}})$, such that monochromatic edges are those of
$P$ and bichromatic edges those of $P_i$, $i\in\{1,\ldots,n_P\}$. Note that by
construction (choice of $\sigma^i$'s), each polygon configuration
$P_{\text{red}}$, $P_{\text{blue}}$ is in $\P^{\eps}(G)$.
\end{itemize}
\end{proof}

Consider a polygon configuration $P\in\P^{0}(G)$, and let $\eps\in
H_1(\Sigma,\ZZ/2\ZZ)$. Denote by $\Wdising^\eps[\mono=P]$ the contribution of
the
set 
$$
\{(P_{\mathrm{red}},P_{\mathrm{blue}})\in\P^{\eps}(G)\times\P^{\eps}(G)
:\mono(P_{\mathrm{red}},P_{\mathrm{blue}})=P\},
$$ 
to the partition function $(\Zising^\eps(J))^2$, and by 
$$
\Wdising[\mono=P]=\sum_{\eps\in\homol{1}}\Wdising^\eps[\mono=P].
$$

By the low-temperature expansion of the Ising partition function, the weight of
each polygon configuration
$P_{\text{red}}$, $P_{\text{blue}}$ is the product of edge-weights contained in
the configuration. As a consequence, the contribution of
$(P_{\text{red}},P_{\text{blue}})$ can be decomposed as a product over
monochromatic edges, and bichromatic edges of each of the components. Using
Lemmas~\ref{lem:mono} and~\ref{lem:bi}, this yields

\begin{prop}\label{cor:Zisingcarree}
For every polygon configuration $P\in\P^{0}(G)$ and every $\eps\in H_1(\Sigma,\ZZ/2\ZZ)$,
\begin{equation}\label{eq:Zisingcarree}
\Wdising^\eps[\mono=P]=2^{-1}\C\bigl(\prod_{e\in
P}e^{-2J_{e^*}}\bigr)\Bigl(\prod_{i=1}^{n_P}
 2 \ZLT^{\Pi_{i}(\epsilon)}(G_{\Sigma_i},2 J)\Bigr),
\end{equation}
where $\C=\bigl(2\prod_{e\in E}e^{J_{e^*}}\bigr)^2$.
Moreover, the double Ising partition function can be rewritten as:
$$
\Zdising(J)=\sum_{P\in\P^{0}(G)}\Wdising[\mono=P],$$
and the probability measure
$\PPdising$ induces a probability measure on 
polygon configurations of $\P^{0}(G)$, given by:
\begin{equation}\label{equ:Pisingmono}
\forall\,P\in\P^{0}(G),\quad\PPdising[\mono=P]=\frac{\Wdising[\mono=P]}{
\Zdising(J)
}.
\end{equation}
\end{prop}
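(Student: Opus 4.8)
The plan is to combine the two preceding lemmas to decompose the weight of each pair $(P_{\text{red}}, P_{\text{blue}})$ with prescribed monochromatic part $P$, then re-sum. First I would fix $P \in \P^0(G)$ and $\eps \in \homol{1}$, and split the weight of a pair $(P_{\text{red}}, P_{\text{blue}}) \in \P^\eps(G) \times \P^\eps(G)$ as a product over monochromatic edges times a product over bichromatic edges. By Lemma~\ref{lem:mono}, the monochromatic edges are exactly those of $P$; since every such edge is covered by precisely one of the two configurations, it contributes $e^{-2J_{e^*}}$ once, giving the factor $\prod_{e\in P} e^{-2J_{e^*}}$. By Lemma~\ref{lem:bi}, the bichromatic edges restricted to each component $\Sigma_i$ form a polygon configuration $P_i \in \P^{\Pi_i(\eps)}(G_{\Sigma_i})$; since each such edge is covered twice, its weight is squared to $e^{-2(2J_{e^*})}$, so the bichromatic contribution inside $\Sigma_i$ is $\prod_{e \in P_i} e^{-2(2J_e^*)}$, which is exactly the summand of $\ZLT^{\Pi_i(\eps)}(G_{\Sigma_i}, 2J)$ evaluated at $P_i$.

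Next I would perform the sum over all pairs with $\mono = P$. The second part of Lemma~\ref{lem:bi} tells us that, for each choice of $(P_1, \ldots, P_{n_P})$ with $P_i \in \P^{\Pi_i(\eps)}(G_{\Sigma_i})$, there are exactly $2^{n_P - 1}$ pairs $(P_{\text{red}}, P_{\text{blue}})$ realizing both $\mono = P$ and these bichromatic restrictions, and conversely every contributing pair arises this way. Hence
\begin{equation*}
\Wdising^\eps[\mono = P] = \C \, 2^{n_P - 1} \Bigl(\prod_{e\in P} e^{-2J_{e^*}}\Bigr) \sum_{\substack{P_i \in \P^{\Pi_i(\eps)}(G_{\Sigma_i}) \\ i=1,\ldots,n_P}} \prod_{i=1}^{n_P} \Bigl(\prod_{e \in P_i} e^{-2(2J_e^*)}\Bigr).
\end{equation*}
The inner sum factorizes over the independent components $\Sigma_i$, turning into $\prod_{i=1}^{n_P} \ZLT^{\Pi_i(\eps)}(G_{\Sigma_i}, 2J)$. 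Rewriting $2^{n_P - 1} = 2^{-1} \prod_{i=1}^{n_P} 2$ yields precisely \eqref{eq:Zisingcarree}.

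For the remaining assertions, summing \eqref{equ:Pisingmono}'s numerator over all $P \in \P^0(G)$ and all $\eps$ reconstructs the full partition function: by Lemma~\ref{lem:mono} the map $(P_{\text{red}}, P_{\text{blue}}) \mapsto \mono(P_{\text{red}}, P_{\text{blue}})$ lands in $\P^0(G)$, and $\Pdising = \bigcup_\eps \P^\eps(G) \times \P^\eps(G)$ is partitioned according to the value of $\mono$, so $\sum_{P \in \P^0(G)} \Wdising[\mono = P] = \sum_\eps (\Zising^\eps(J))^2 = \Zdising(J)$. Dividing by $\Zdising(J)$ then gives a bona fide probability measure on $\P^0(G)$, which is by definition the pushforward of $\PPdising$ under $\mono$, establishing \eqref{equ:Pisingmono}. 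The main subtlety is bookkeeping the combinatorial factor $2^{n_P-1}$ and making sure the component-wise factorization of the bichromatic sum is legitimate: this relies on the fact that, once $P$ (hence the decomposition $\Sigma = \Sigma_1 \cup \cdots \cup \Sigma_{n_P}$) and $\eps$ are fixed, the choices of $P_i$ in the distinct components are genuinely independent, which is exactly the content of the second bullet of Lemma~\ref{lem:bi}; everything else is a straightforward regrouping of weights already licensed by the low-temperature expansion.
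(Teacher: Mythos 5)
Your proposal is correct and follows essentially the same route as the paper, which obtains the proposition directly by decomposing each pair's weight into monochromatic and bichromatic edge contributions and invoking Lemmas~\ref{lem:mono} and~\ref{lem:bi} (including the $2^{n_P-1}$ multiplicity and the component-wise factorization into $\ZLT^{\Pi_i(\eps)}(G_{\Sigma_i},2J)$). You have simply written out the bookkeeping that the paper leaves implicit, and your handling of the sum over $\eps$ and the pushforward under $\mono$ matches the paper's definitions.
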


\section{Mixed contour expansion}\label{sec:mixedcontour}

In \cite{Nienhuis}, Nienhuis rewrites the partition function of the
Ashkin--Teller model on the square lattice as a statistical sum over polygon
families on $G$ and $G^*$ which do not intersect. We apply the same approach
to the double Ising model on $\Sigma$ but some care is required to keep track
of the homology class of the polygon configurations involved.

We fix $\eps\in\homol{1}$
and a polygon configuration $P\in\P^{0}(G)$. In Proposition
\ref{thm:mixedcontour1}, we apply the
low/high-temperature duality to each of the terms
$\ZLT^{\Pi_{i}(\epsilon)}(G_{\Sigma_i},2 J)$ involved in the
expression of $\Wdising^{\eps}[\mono=P]$ of Equation \eqref{eq:Zisingcarree}.
This has the effect of transforming bichromatic polygon configurations of
$G_{\Sigma_i}$ into dual polygon configurations of $G_{\Sigma_i}^*$. 
Then, in Proposition~\ref{prop:mixedcontour}, we sum
over $\eps\in\homol{1}$, and show that the outcome simplifies
to a sum over dual polygon configurations of the dual graph, having 0 homology class
in $\homol{1}$, and not intersecting $P$.

\begin{prop}\label{thm:mixedcontour1}
For every polygon configuration $P\in\P^{0}(G)$ and every $\eps\in H_1(\Sigma,\ZZ/2\ZZ)$,
\begin{multline*}
\Wdising^{\eps}[\mono=P]=
\C'
\Bigl(\prod_{e\in P}
\frac{2e^{-2J_{e^*}}}{1+e^{-4J_{e^*}}}\Bigr)\times\\
\times \prod_{i=1}^{n_P}\Bigl[\sum_{\tau^i\in
H_1(\Sigma_i,\ZZ/2\ZZ)}(-1)^{(\tau^i|\Pi_{i}(\eps))}
\sum_{P_i^*\in\P^{\tau^i}(G_{\Sigma_i}^*)}\Bigl(\prod_{e^*\in P_i^*}
\frac{1-e^{-4J_{e^*}}}{1+e^{-4J_{e^*}}}
\Bigr)\Bigr],
\end{multline*}
where, $\C'=2^{|V^*|+1}\bigl(\prod_{e\in
E}\cosh (2J_{\dual e})\bigr)$.
\end{prop}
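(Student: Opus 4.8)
The plan is to start from the expression of $\Wdising^{\eps}[\mono=P]$ furnished by Proposition~\ref{cor:Zisingcarree}, i.e.\ from~\eqref{eq:Zisingcarree}, and to feed the low/high-temperature duality~\eqref{eq:lowhighexp} into each factor $\ZLT^{\Pi_{i}(\eps)}(G_{\Sigma_i},2J)$. This substitution is legitimate because each of the connected components $\Sigma_1,\dots,\Sigma_{n_P}$ into which $P$ separates $\Sigma$ is itself a compact, orientable surface with boundary (its boundary being a union of cycles of $\Sigma$), so Propositions~\ref{prop:lowtemp} and~\ref{prop:hightemp}, hence the combined identity~\eqref{eq:lowhighexp}, apply verbatim to the pair $(G_{\Sigma_i},G_{\Sigma_i}^*)$ with the doubled coupling constants $(2J_{e^*})$ and the defect class $\Pi_{i}(\eps)\in\relhomol[\Sigma_i]{1}$.

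First I would record the elementary identities produced by doubling a coupling constant, namely $\tanh(2J_{e^*})=\frac{1-e^{-4J_{e^*}}}{1+e^{-4J_{e^*}}}$, $\frac{\cosh(2J_{e^*})}{e^{2J_{e^*}}}=\frac{1+e^{-4J_{e^*}}}{2}$, and $\cosh(2J_{e^*})=\frac12 e^{2J_{e^*}}(1+e^{-4J_{e^*}})$, the last of which is needed for the constant $\C'$. Applying~\eqref{eq:lowhighexp} to $\Sigma_i$ with $J\to 2J$ then gives
\[
\ZLT^{\Pi_{i}(\eps)}(G_{\Sigma_i},2J)=2^{|V_{\Sigma_i}^*|-1}\Bigl(\prod_{e\in E_{\Sigma_i}}\frac{1+e^{-4J_{e^*}}}{2}\Bigr)\sum_{\tau^i\in H_1(\Sigma_i,\ZZ/2\ZZ)}(-1)^{(\tau^i|\Pi_{i}(\eps))}\sum_{P_i^*\in\P^{\tau^i}(G_{\Sigma_i}^*)}\prod_{e^*\in P_i^*}\frac{1-e^{-4J_{e^*}}}{1+e^{-4J_{e^*}}},
\]
so that, after taking the product over $i=1,\dots,n_P$, the sum over $\tau^i$, the sign $(-1)^{(\tau^i|\Pi_{i}(\eps))}$ and the dual-edge weight $\frac{1-e^{-4J_{e^*}}}{1+e^{-4J_{e^*}}}$ already appear exactly as in the target formula; only the multiplicative constant remains to be identified.

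Collecting that constant is where the geometry of the decomposition enters. Since dual vertices lie in the interiors of faces of $G$ and each face lies in a single $\Sigma_i$, the sets $V_{\Sigma_i}^*$ partition $V^*$, whence $\sum_{i=1}^{n_P}|V_{\Sigma_i}^*|=|V^*|$; and since the boundaries $\partial\Sigma_i$ consist precisely of the edges of $P$, the sets $E_{\Sigma_i}$ partition $E\setminus P$. Plugging the displayed identity into~\eqref{eq:Zisingcarree} with $\C=(2\prod_{e\in E}e^{J_{e^*}})^2$ and using these two facts, a direct count of the powers of $2$ shows that the prefactor collapses to $2^{1+|V^*|-|E|+|P|}\bigl(\prod_{e\in E}e^{2J_{e^*}}\bigr)\bigl(\prod_{e\in E\setminus P}(1+e^{-4J_{e^*}})\bigr)\prod_{e\in P}e^{-2J_{e^*}}$. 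Finally I would verify that this equals $\C'\prod_{e\in P}\frac{2e^{-2J_{e^*}}}{1+e^{-4J_{e^*}}}$ by expanding $\C'=2^{|V^*|+1}\prod_{e\in E}\cosh(2J_{e^*})$ with $\cosh(2J_{e^*})=\frac12 e^{2J_{e^*}}(1+e^{-4J_{e^*}})$ and cancelling the factors $1+e^{-4J_{e^*}}$ attached to edges of $P$ against the denominator.

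I expect the main obstacle to be precisely this bookkeeping attached to the surgery of $\Sigma$ along $P$: one must check carefully that $V^*$ and $E\setminus P$ distribute over the components with neither overlap nor omission — in particular that every edge of $P$ is a boundary edge of some $\Sigma_i$ and no other primal edge is, and that the dual edges crossing $P$ belong to no $G_{\Sigma_i}^*$ — so that each factor $\frac{1+e^{-4J_{e^*}}}{2}$ is picked up exactly once for $e\in E\setminus P$ and never for $e\in P$. A lesser but necessary point is to confirm that the homology index $\Pi_{i}(\eps)\in\relhomol[\Sigma_i]{1}$ and the intersection pairing $(\tau^i|\Pi_{i}(\eps))$ emerging from~\eqref{eq:lowhighexp} are literally those of the statement; this is immediate from the definition of $\Pi_i$ recalled before Lemma~\ref{lem:bi} and of the intersection form, but should be stated explicitly.
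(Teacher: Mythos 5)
Your proposal is correct and follows essentially the same route as the paper: it starts from Equation \eqref{eq:Zisingcarree}, applies the low/high-temperature duality \eqref{eq:lowhighexp} on each component $\Sigma_i$ with doubled coupling constants and defect class $\Pi_i(\eps)$, and then collects the constant using the partition of $V^*$ by the $V_{\Sigma_i}^*$ and the identification of $\bigcup_i E_{\Sigma_i}$ with $E\setminus P$. The only difference is cosmetic (you rewrite $\cosh(2J_{e^*})/e^{2J_{e^*}}$ in exponential form early, the paper converts at the end), and your power-of-two bookkeeping checks out.
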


\begin{proof}
The expression for $\Wdising^{\eps}[\mono=P]$ of Equation
\eqref{eq:Zisingcarree}
can be rewritten as:
\begin{equation*}
\Wdising^\eps[\mono=P]=2^{n_P-1}\C\bigl(\prod_{e\in
P}e^{-2J_{e^*}}\bigr)\Bigl(\prod_{i=1}^{n_P}
\ZLT^{\Pi_{i}(\epsilon)}(G_{\Sigma_i},2 J)\Bigr).
\end{equation*}

For every $i\in\{1,\ldots,n_P\}$, the contribution,
$\ZLT^{\Pi_{i}(\eps)}(G_{\Sigma_i}, 2J)$
is the low-temperature expansion of an Ising model on vertices of
$V_{\Sigma_i}^*$ with coupling constants $2J_{e^*}$ and defect condition
$\Pi_{i}(\eps)$. Using the relation between Kramers
and Wannier's low and high-temperature expansions of
\eqref{eq:lowhighexp}, it can be expressed as:
\begin{equation*}
  \ZLT^{\Pi_{i}(\eps)}(G_{\Sigma_i}, 2J) = \A_i \times
  \sum_{\tau^i\in\homol[\Sigma_i]{1}}(-1)^{(\tau^i| \Pi_{i}(\eps))}
  \ZHT^{\tau^i}(G_{\Sigma_i}, 2J),
\end{equation*}
where
\begin{equation*}
  \A_i= 2^{|V_{\Sigma_i}^*|-1} \prod_{e\in E_{\Sigma_i}} \frac{\cosh
    (2J_{e^*})}{e^{2J_{e^*}}}.
\end{equation*}
Let us first compute the part which is independent of $\eps$. Observing that the
collection of sets of dual vertices  $(V_{\Sigma_i}^*)_{i=1}^{n_P}$ is a partition
of $V^*$, one writes:
\begin{align*}
2^{n_P-1}  \C  \bigl( \prod_{e\in P}e^{-2J_{e^*}} \bigr) \bigl(\prod_{i=1}^{n_P}
\A_i\bigr)
&=2^{n_P-1} 2^2\bigl(\prod_{e\in E}e^{2J_{e^*}}\bigr)
\bigl( \prod_{e\in P}e^{-2J_{e^*}} \bigr) 
2^{|V^*|-n_P}
\bigl(\prod_{e\in
E_{\Sigma_i}}\frac{\cosh(2J_{e^*})}{e^{2J_{e^*}}}\bigr).
\end{align*}
Noticing that
the collection of edges in the $\Sigma_i$'s is exactly the set of edges of $G$ not in
$P$, we have:
\begin{equation*}
2^{n_P-1}  \C  \bigl( \prod_{e\in P}e^{-2J_{e^*}} \bigr) \bigl(\prod_{i=1}^{n_P}
\A_i\bigr)
  =
  2^{|V^*|+1} \bigl(\prod_{e\in E} \cosh (2J_{e^*})\bigr)
  \bigl(\prod_{e\in P} \cosh (2 J_{e^*})^{-1} \bigr).
\end{equation*}
Define the constant $\C'=2^{|V^*|+1}\bigl(\prod_{e\in E}
\cosh(2J_{e^*})\bigr)$, then by definition of $\ZHT^{\tau^i}(G_{\Sigma_i},2J)$,
one deduces that $\Wdising^{\eps}[\mono=P]$ equals:
\begin{align*}
\C'
\bigl(\prod_{e\in P}
\cosh(2J_{e^*})^{-1}\bigr)
\prod_{i=1}^{n_P}\Bigl[
\sum_{\tau^i\in
H_1(\Sigma_i,\ZZ/2\ZZ)}(-1)^{(\tau^i|\Pi_{i}(\eps))}
\sum_{P_i^*\in\P^{\tau^i}(G_{\Sigma_i}^*)}\bigl(\prod_{e^*\in P_i^*}\tanh
(2J_{e^*})\bigr)
\Bigr].
\end{align*}
The proof of Proposition \ref{thm:mixedcontour1} is concluded by observing that
$$
\cosh(2J_{\dual e})^{-1}=\frac{2e^{-2J_{e^*}}}{1+e^{-4J_{\dual e}}},\text{ and
}\;\;
\tanh(2J_{e^*})=\frac{1-e^{-4J_{e^*}}}{1+e^{-4J_{e^*}}}.\qedhere
$$
\end{proof}

In order to have an explicit expression for the contribution 
of $P$ to the partition function $\Zdising(J)$, we need to sum the quantities
$\Wdising^\eps[\mono=P]$ of Proposition~\ref{thm:mixedcontour1} over
$\eps\in\homol{1}$. This is the
object of the next proposition.

\begin{prop}\label{prop:mixedcontour2}
For every polygon configuration $P\in\P^0(G)$,
  \begin{equation*}
    \Wdising[\mono=P]= \C_{\mathrm{I}}\,\Bigl( \prod_{e\in
P}\frac{2e^{-2J_{\dual
    e}}}{1+e^{-4J_{\dual e}}} \Bigr) 
\sum_{\{P^*\in\P^0(G^*):\,P\cap P^*=\emptyset\}}
\Bigl(    \prod_{\dual e  \in \dual P} 
\frac{1-e^{-4J_{\dual e}}}{1+e^{-4J_{\dual e}}} \Bigr).
  \end{equation*}
  where $\C_{\mathrm{I}}=2^{2g}\mathcal{C}'=2^{|V^*|+2g+1}\left(\prod_{e\in E}\cosh
(2J_{e^*})\right)$.
\end{prop}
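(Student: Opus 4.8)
The plan is to start from the expression for $\Wdising^{\eps}[\mono=P]$ given in Proposition~\ref{thm:mixedcontour1} and sum it over $\eps\in\homol{1}=H_1(\Sigma,\ZZ/2\ZZ)$. Since the factor $\C'\bigl(\prod_{e\in P}\tfrac{2e^{-2J_{e^*}}}{1+e^{-4J_{e^*}}}\bigr)$ does not depend on $\eps$, it can be pulled out of the sum, and the remaining task is to evaluate
\begin{equation*}
\sum_{\eps\in\homol[\Sigma]{1}}\ \prod_{i=1}^{n_P}\Bigl[\sum_{\tau^i\in H_1(\Sigma_i,\ZZ/2\ZZ)}(-1)^{(\tau^i|\Pi_i(\eps))}\sum_{P_i^*\in\P^{\tau^i}(G_{\Sigma_i}^*)}\Bigl(\prod_{e^*\in P_i^*}\tfrac{1-e^{-4J_{e^*}}}{1+e^{-4J_{e^*}}}\Bigr)\Bigr].
\end{equation*}
First I would interchange the sum over $\eps$ and the product over connected components by expanding the product: this turns it into a sum over tuples $(\tau^1,\ldots,\tau^{n_P})$ of the product $\bigl(\prod_i \ZHT^{\tau^i}(G_{\Sigma_i}^*,2J)\bigr)$ weighted by $\sum_{\eps}\prod_i(-1)^{(\tau^i|\Pi_i(\eps))}=\sum_{\eps}(-1)^{\sum_i(\tau^i|\Pi_i(\eps))}$. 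The key homological input is that the pairing $(\tau^i\,|\,\Pi_i(\eps))$ in $\Sigma_i$ equals the pairing in $\Sigma$ of $\eps$ with the image of $\tau^i$ under the natural map $H_1(\Sigma_i)\to H_1(\Sigma)$ induced by inclusion (this is the compatibility of the intersection form with the maps $\Pi_i$; I would cite the relevant appendix, e.g.\ Appendix~\ref{app:A6}). Hence $\sum_i(\tau^i|\Pi_i(\eps))=(\,\overline\tau\,|\,\eps)$ where $\overline\tau=\sum_i \iota_{i,*}(\tau^i)\in H_1(\Sigma,\ZZ/2\ZZ)$.

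Next I would invoke the orthogonality identity, a variant of \eqref{eq:orthogonality}: summing $(-1)^{(\overline\tau|\eps)}$ over all $\eps\in H_1(\Sigma,\ZZ/2\ZZ)$ gives $2^{2g}$ if $\overline\tau=0$ and $0$ otherwise, because the intersection form on $H_1(\Sigma,\ZZ/2\ZZ)$ is non-degenerate and $\dim H_1(\Sigma,\ZZ/2\ZZ)=2g$. So only tuples $(\tau^1,\ldots,\tau^{n_P})$ with $\sum_i \iota_{i,*}(\tau^i)=0$ survive, each contributing a factor $2^{2g}$. The remaining step is the geometric identification: a tuple $(P_1^*,\ldots,P_{n_P}^*)$ with $P_i^*\in\P^{\tau^i}(G_{\Sigma_i}^*)$ and $\sum_i\iota_{i,*}(\tau^i)=0$ glues, across the components cut out by $P$, to a single dual polygon configuration $P^*$ on $G^*$ that avoids $P$ (the edges of $P^*$ live in the $G_{\Sigma_i}^*$, which together are exactly the dual edges not crossing $P$) and has total homology class $0$ in $H_1(\Sigma,\ZZ/2\ZZ)$; conversely every $P^*\in\P^0(G^*)$ with $P^*\cap P=\emptyset$ arises uniquely this way by restriction to the components. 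Under this bijection $\prod_i\prod_{e^*\in P_i^*}\tfrac{1-e^{-4J_{e^*}}}{1+e^{-4J_{e^*}}}=\prod_{e^*\in P^*}\tfrac{1-e^{-4J_{e^*}}}{1+e^{-4J_{e^*}}}$, which yields exactly the claimed formula with $\C_{\mathrm I}=2^{2g}\C'$.

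The main obstacle I expect is the homological bookkeeping: making precise that restriction to the $\Sigma_i$ and the inclusion-induced maps $\iota_{i,*}$ are mutually inverse at the level of (relative) homology classes of polygon configurations, and that the condition ``$P^*$ has homology class $0$ in $\Sigma$'' is equivalent to ``$\sum_i\iota_{i,*}([\,P_i^*\,])=0$'' — one must check there is no loss or gain of homological information coming from the cut locus $P$ itself, using that $P$ is null-homologous (it lies in $\P^0(G)$). The compatibility $(\tau^i\,|\,\Pi_i(\eps))=(\iota_{i,*}(\tau^i)\,|\,\eps)$ between the intersection form on $\Sigma_i$ relative to its boundary and the one on $\Sigma$ is a Poincaré–Lefschetz-type statement, and spelling it out carefully (with the correct $\ZZ/2\ZZ$ coefficients and relative versions) is where most of the care is needed; everything else is the elementary orthogonality computation and the gluing/restriction bijection for polygon configurations, both already used in spirit in Section~\ref{sec1}.
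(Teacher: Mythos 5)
Your proposal is correct and follows essentially the same route as the paper's proof: expand the product over components, exchange the sums, use that $(\tau^i|\Pi_i(\eps))$ equals the intersection in $\Sigma$ of $\eps$ with the inclusion-induced image of $\tau^i$, and apply the orthogonality relation to force $\sum_i\pi_i(\tau^i)=0$, i.e.\ $P^*=P_1^*\cup\cdots\cup P_{n_P}^*\in\P^0(G^*)$, with the surviving factor $2^{2g}$ giving $\C_{\mathrm{I}}=2^{2g}\C'$. The points you flag as needing care (compatibility of the pairings and the gluing/restriction bijection) are exactly those the paper handles, the former by the same representative-counting argument.
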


\begin{proof}
To simplify notation, let us write the product of weights of edges in
polygon configurations as follows:
\begin{equation*}
\Theta(P)=\prod_{e\in P}
\frac{2e^{-2J_{e^*}}}{1+e^{-4J_{e^*}}}
,\quad\dual\Theta(P_i^*)=\prod_{e^*\in P_i^*}
\frac{1-e^{-4J_{e^*}}}{1+e^{-4J_{e^*}}}
,\; \text{ for }i\in\{1,\ldots,n_P\}.
\end{equation*}
Then,
\begin{align*}
\Wdising[\mono=P]=\C'\Theta(P)\!\!
\sum_{\eps\in \homol{1}}
\prod_{i=1}^{n_P}
\Bigl[\sum_{\tau^i\in H_1(\Sigma_i,\ZZ/2\ZZ)}
(-1)^{(\tau^i|\Pi_{i}(\eps))}\!\!
\sum_{P_i^*\in\P^{\tau^i}(G_{\Sigma_i}^*)}\dual\Theta(P_i^*)\Bigr].
\end{align*}
Expanding the product over $i\in\{1,\ldots,n_P\}$ and 
exchanging the summation over $\eps$ and $(\tau^1,\ldots,\tau^{n_P})$,
one obtains that $\Wdising[\mono=P]$ is equal to:
\begin{align*}
&\C'\Theta(P)\!\!
\sum_{(\tau^1,\ldots,\tau^{n_P})\in 
\prod_{i=1}^{n_P}H_1(\Sigma_i,\ZZ/2\ZZ)}
\Bigl(\sum_{\eps\in \homol{1}}(-1)^{\sum_{i=1}^{n_P}
(\tau^i|\Pi_{i}(\eps))}\Bigr)\prod_{i=1}^{n_P}
\Bigl(\sum_{P_i^*\in\P^{\tau^i}(G_{\Sigma_i}^*)}\dual\Theta(P_i^*)\Bigr).
\end{align*}
The evaluation of the intersection form
 $(\tau^i | \Pi_{i}(\eps))$ on
$\homol[\Sigma_i]{1}\times\relhomol[\Sigma_i]{1}$ is equal to $(\pi_i(\tau^i) |
\eps)$ on $\homol{1}\times\homol{1}$, where $\pi_i=\pi_{\Sigma,\Sigma_i}$ is the
projection induced by the inclusion $\Sigma_i\subset \Sigma$, see Appendix
\ref{app:A6}. Indeed, take a
representative $\rep{\tau}^i$ of $\tau^i$ in $\Sigma_i$. Counting
intersections with the
restriction of a representative $\rep{\epsilon}$ of $\eps$ to $\Sigma_i$ is the same as
counting
intersections with the whole $\rep{\epsilon}$, since $\rep{\tau}^i$ is confined to
$\Sigma_i$ and thus has no intersection with $\rep{\epsilon}$ outside of
$\Sigma_i$. Therefore,
\begin{equation*}
  \sum_{i=1}^{n_P} (\tau^i|\Pi_{i}(\eps))=
  \Bigl(\sum_{i=1}^{n_P} \pi_i(\tau^i)\Big|\eps\Bigr).
\end{equation*}
An application of the orthogonality relation~\eqref{eq:orthogonality} in the
special case where $\partial\Sigma=\emptyset$ gives that the sum over $\epsilon$
is $0$ unless $\sum_{i=1}^{n_P}
\pi_i(\tau^i)=0\in\homol{1}$, \emph{i.e.}, the homology class on the surface
$\Sigma$ of the whole dual polygon configuration $\dual P = P_1^* \cup \cdots \cup
P_{n_P}^*$ is zero, \emph{i.e.}, $\dual P \in \P^{0}(\dual G)$, and that in this
case, the sum over $\epsilon$ equals $2^{2g}$.
\end{proof}

As an interesting corollary, we obtain a mixed contour expansion for the
partition function of the double Ising model, and the corresponding expression
for the double Ising probability measure of monochromatic polygon
configurations which, by Lemma \ref{lem:mono}, are the XOR polygon
configurations.

\begin{cor}\label{prop:mixedcontour}$\,$
\begin{itemize}
 \item 
The double Ising partition function can be rewritten as:
  \begin{equation*}
    \Zdising(G^*,J) =\C_{\mathrm{I}}\sum_{\substack{\{(P,P^*)\in \P^{0}(G)\times
\P^{0}(\dual G):\, 
  P\cap \dual P=\emptyset\} }} \Bigl(\prod_{e\in P} 
 \frac{2e^{-2J_{\dual
    e}}}{1+e^{-4J_{\dual e}}} \Bigr) 
    \Bigl(\prod_{\dual e  \in \dual P}  \frac{1-e^{-4J_{\dual
    e}}}{1+e^{-4J_{\dual e}}} \Bigr),
  \end{equation*}
where $\C_{\mathrm{I}}=
2^{|V^*|+2g+1} \left(\prod_{e\in E}\cosh(2J_{\dual e})\right)$. 
\item 
For every
dual polygon configuration $P\in\P^{0}(G)$:
$$
\PPdising[\XOR=P]=\frac{\displaystyle\Bigl( \prod_{e\in
P}\frac{2e^{-2J_{\dual
    e}}}{1+e^{-4J_{\dual e}}} \Bigr) 
\sum_{\{P^*\in\P^0(G^*):\,P\cap P^*=\emptyset\}}
\Bigl(    \prod_{\dual e  \in \dual P} 
\frac{1-e^{-4J_{\dual e}}}{1+e^{-4J_{\dual e}}}\Bigr)}
{\displaystyle\!\!\!
  \sum_{\{(P,P^*)\in \P^{0}(G)\times\P^{0}(\dual G):\,
  P\cap \dual P=\emptyset\} } 
 \Bigl(\prod_{e\in P} \frac{2e^{-2J_{\dual e}}}{1+e^{-4J_{\dual e}}} \Bigr)
 \Bigl(\prod_{\dual e \in \dual P}\frac{1-e^{-4J_{\dual e}}}{1+e^{-4J_{\dual e}}} \Bigr)}.
$$
\end{itemize}
\end{cor}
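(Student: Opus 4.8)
The plan is to assemble the corollary directly from Proposition~\ref{prop:mixedcontour2}, Proposition~\ref{cor:Zisingcarree} and Lemma~\ref{lem:mono}, so that essentially no new computation is required — it is a bookkeeping step consolidating the results of this section.

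First, for the partition function identity, I would start from the decomposition $\Zdising(G^*,J)=\sum_{P\in\P^{0}(G)}\Wdising[\mono=P]$ of Proposition~\ref{cor:Zisingcarree}, and substitute into it the expression for $\Wdising[\mono=P]$ obtained in Proposition~\ref{prop:mixedcontour2}. Since in that expression the constant $\C_{\mathrm{I}}$ and the factor $\prod_{e\in P}\frac{2e^{-2J_{\dual e}}}{1+e^{-4J_{\dual e}}}$ do not depend on $\dual P$, while the remaining sum runs over $\dual P\in\P^{0}(\dual G)$ with $P\cap \dual P=\emptyset$, the iterated sum over $P$ and then over admissible $\dual P$ collapses into a single sum over pairs $(P,\dual P)\in\P^{0}(G)\times\P^{0}(\dual G)$ subject to the non-intersection constraint. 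This yields exactly the stated formula, with $\C_{\mathrm{I}}=2^{|V^*|+2g+1}\prod_{e\in E}\cosh(2J_{\dual e})$ inherited verbatim.

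Second, for the probability of an XOR configuration, I would invoke Lemma~\ref{lem:mono}, which identifies $\mono(P_{\text{red}},P_{\text{blue}})$ with $\XOR(P_{\text{red}},P_{\text{blue}})$, so that $\PPdising[\XOR=P]=\PPdising[\mono=P]$ for every $P\in\P^{0}(G)$. By the last display of Proposition~\ref{cor:Zisingcarree}, this equals $\Wdising[\mono=P]/\Zdising(G^*,J)$. Substituting the expression of Proposition~\ref{prop:mixedcontour2} into the numerator and the partition-function formula just established into the denominator, the common prefactor $\C_{\mathrm{I}}$ cancels, leaving the ratio of the two polygon sums displayed in the corollary.

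The only point deserving care — and the closest thing to an obstacle — is consistency of the domains of summation: one must check that the constraint appearing in Proposition~\ref{prop:mixedcontour2} for a fixed $P$ (namely $\dual P\in\P^{0}(\dual G)$ with $P\cap\dual P=\emptyset$) is precisely the slice at a fixed first coordinate of the pair constraint $\{(P,\dual P)\in\P^{0}(G)\times\P^{0}(\dual G):P\cap\dual P=\emptyset\}$, so that merging the two sums counts each admissible pair exactly once. Everything else is substitution and cancellation of explicit constants.
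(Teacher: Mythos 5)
Your proposal is correct and follows exactly the route the paper intends: the corollary is stated immediately after Proposition~\ref{prop:mixedcontour2} as a direct consequence of substituting that expression for $\Wdising[\mono=P]$ into the decomposition $\Zdising(G^*,J)=\sum_{P\in\P^{0}(G)}\Wdising[\mono=P]$ and the probability formula of Proposition~\ref{cor:Zisingcarree}, with Lemma~\ref{lem:mono} identifying monochromatic and XOR configurations. Your attention to the consistency of the summation domains and the cancellation of $\C_{\mathrm{I}}$ is exactly the bookkeeping the paper leaves implicit.
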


Note that one can see  Kramers and Wannier's duality on this expression: the
duality relation between coupling constant
\begin{equation*}
  \tanh \dual J = e^{-2 J}
\end{equation*}
exchanges the expression for an edge of $P$ and a dual edge of $\dual P$.

\section{Quadri-tilings and polygon configurations}\label{sec5}

In this section, we let $G$ be a graph embedded in a
compact, orientable, boundaryless surface $\Sigma$ of genus $g$, and $G^*$ denote
its dual graph. For the moment, we forget about the double Ising model. The
goal of this section is to explicitly construct pairs of
non-intersecting polygon configurations of $G$ and $G^*$, from a
dimer model on a decorated, bipartite version $\GQ$ of $G$, called  
\emph{quadri-tilings}~\cite{Bea1}.

This construction is done in two steps. The first step uses a mapping of
Nienhuis \cite{Nienhuis}, which constructs pairs of non-intersecting primal and
dual polygon configurations, from 6-vertex configurations of the medial graph;
this is the subject of Section \ref{sec31}. The second step consists of using
Wu--Lin/Dub\'edat's mapping \cite{WuLin,Dubedat} from the 6-vertex model of the
medial graph to the bipartite dimer model on the decorated graph $\GQ$. This is
the subject of
Section \ref{sec:52}.

Using the above results and those of Section \ref{sec:mixedcontour},
Theorem \ref{thm:main1inside} proves that $\XOR$ loops of the double Ising
model have the same law as primal polygon configurations of the bipartite dimer
model.

\subsection{6-vertex model and polygon configurations}\label{sec31}

The \emph{medial graph}
$\GM$ of the graph $G$ is defined as follows. Vertices of $\GM$
correspond to edges of $G$. Two vertices of the medial graph are joined by
an edge if the corresponding edges in the primal graph are incident. Observe
that $\GM$ is also the medial graph of the dual graph $G^*$,
and that vertices of the medial graph all have degree four. Figure
\ref{fig:medial} represents the medial graph of a subset of $\ZZ^2$.

\begin{figure}[ht]
\begin{center}
\includegraphics[height=4cm]{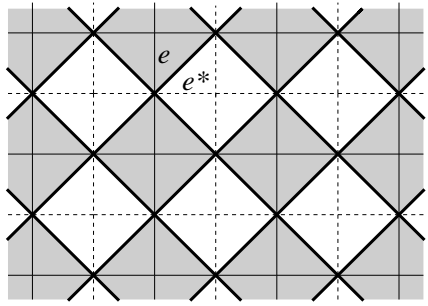}
\caption{The medial graph of $\ZZ^2$: plain lines represent $\ZZ^2$, dotted
lines represent the dual graph $\ZZ^2$, and thick plain lines represent the
medial graph $(\ZZ^2)^M$. Grey (resp. white) faces of the medial graph correspond to primal (resp. dual) vertices of the initial graph.}
\label{fig:medial}
\end{center} 
\end{figure}

A \emph{6V-configuration} or an \emph{ice-type configuration} is an
orientation of edges of $\GM$, such that every vertex has exactly two incoming
edges \cite{Lieb}. An equivalent way of defining $6$-vertex configurations
uses edge configurations instead of orientations, as represented in Figure
\ref{fig:6vconfigs}. This approach is more
useful in our context, so that we define a \emph{$6$-vertex configuration}
to be an edge configuration, such that around every vertex of
$\GM$, there is an even number of consecutive present edges.

\begin{figure}[ht]
\begin{center}
\includegraphics[width=\textwidth]{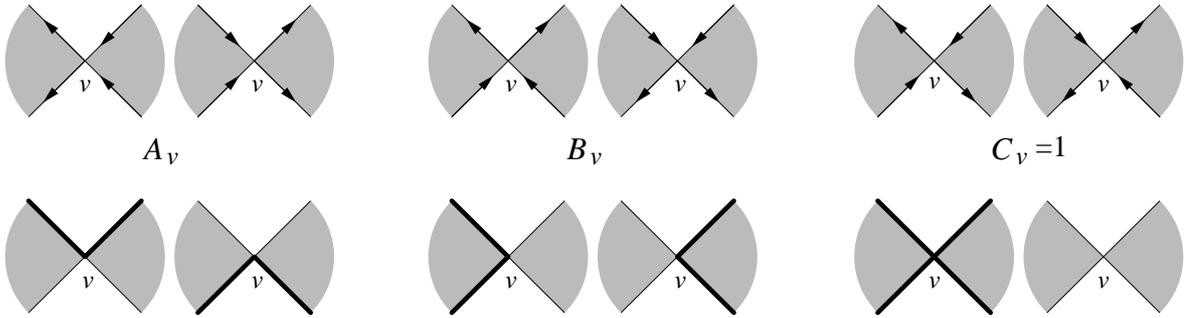}
\caption{The six possible local configurations around a vertex $v$ of the
6-vertex model, and their respective weights: arrow representation (top),
and the even degree subgraph representation (bottom).}
\label{fig:6vconfigs}
\end{center} 
\end{figure}
In order to make the 6-vertex model a model of statistical mechanics, weights
are associated to local configurations around a vertex, and the probability of a
6-vertex configuration is taken to be proportional to the product of its local
weights. In absence of external field, the weights of complementary local
configurations are taken to be equal: there are thus three
parameters for each vertex $v$ of the medial graph $\GM$, denoted by $A_v$,
$B_v$ and $C_v$. Since multiplying
these three parameters by the same positive constant does not change the
measure, we set $C_v=1$, see also Figure \ref{fig:6vconfigs}. Let us denote by
$\Z6V(\GM,(A,B))$ the partition function of this model.

\textbf{Mapping I} \cite{Nienhuis}. Consider the following combinatorial mapping
from 6-vertex
configurations to
edge configurations of the primal and dual graphs: whenever a vertex of $\GM$ has
two
neighboring edges in the 6 vertex configuration, put the edge of $G$ or $\dual
G$ separating the present and the absent edges; see Figure
\ref{fig:6vertexMapping}. The following lemma characterizes this mapping, see
also \cite{Nienhuis}.

\begin{figure}[ht]
\begin{center}
\includegraphics[width=10cm]{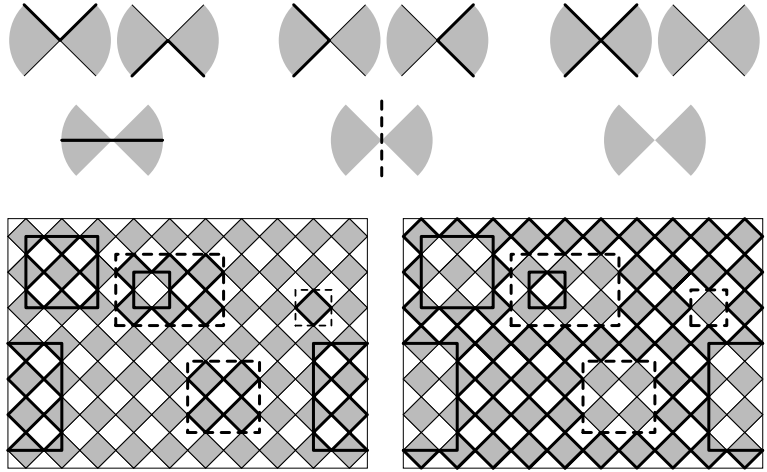}
\caption{Mapping on the local level (top), and on the global level (bottom).}
\label{fig:6vertexMapping}
\end{center} 
\end{figure}

\begin{lem}\label{lem:mapping6V}$\,$
\begin{itemize}
\item Mapping I associates to a 6-vertex configuration a pair of polygon
configurations $(P,P^*)$, which do not intersect and such that the homology
class of $P\cup P^*$ in $H_1(\Sigma;\ZZ/2\ZZ)$ is $0$.
\item Given a pair of polygon configurations $(P,P^*)$ as above, there are
exactly two 6-vertex configurations which are mapped to $(P,P^*)$.
\end{itemize}
\end{lem}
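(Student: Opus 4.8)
The plan is to reduce everything to a local analysis at a single vertex of $\GM$ and then glue, the genus of $\Sigma$ entering only through a homological consistency condition. I would first fix a vertex $v$ of $\GM$: it is the midpoint of an edge $e=uu'$ of $G$, which is also the crossing point of $e$ with its dual edge $e^*$, so the four medial edges at $v$ split into two ``opposite'' pairs, one surrounding each of the two primal-vertex faces of $\GM$ incident to $v$, and one surrounding each of the two dual-vertex faces. I would then tabulate the six admissible local states of the $6$-vertex model at $v$ together with the effect of Mapping~I: the empty state and the full state produce no edge at $v$, whereas each of the four ``two neighbouring edges'' states surrounds a single face of $\GM$, and Mapping~I produces $e$ when that face is a dual face, $e^*$ when it is a primal face. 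In particular at most one of $e,e^*$ is ever produced at $v$, which gives $P\cap P^*=\emptyset$ as soon as one knows $P\in\P(G)$ and $P^*\in\P(G^*)$.

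For the polygon property I would argue face by face. Fixing a primal vertex $u$, consider the face $F_u$ of $\GM$: its corners are the midpoints $v_{e_1},\dots,v_{e_k}$ of the edges $e_1,\dots,e_k$ incident to $u$ in cyclic order, and its sides are consecutive medial edges $s_1,\dots,s_k$. From the local table, $e_j$ belongs to $P$ exactly when the two sides $s_{j-1},s_j$ of $F_u$ meeting at the corner $v_{e_j}$ have opposite status (present/absent) in the $6$-vertex configuration $C$; hence $\deg_P(u)$ equals the number of status changes around the cyclic word $(s_1,\dots,s_k)$, which is even, so $P\in\P(G)$, and dually $P^*\in\P(G^*)$. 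For the homology class I would avoid any homological computation and exhibit an explicit bounding $2$-chain: refine the cell decomposition of $\Sigma$ by starring each face of $\GM$ from its centre (a primal or a dual vertex), so that $\Sigma$ becomes triangulated with $1$-cells the half-edges of $G$, the half-edges of $G^*$ and the medial edges; for a medial edge $s$, let $D_s$ be the union of the two triangles containing $s$ (one in a primal face, one in a dual face). A direct check on each half-edge, again via the local table, shows that $\partial\bigl(\sum_{s\in C}D_s\bigr)=P\cup P^*$ with $\ZZ/2\ZZ$-coefficients, whence $[P\cup P^*]=0$ in $H_1(\Sigma;\ZZ/2\ZZ)$.

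For the second assertion, given $(P,P^*)$ as above I would describe the fibre of Mapping~I over it. The local table restricts the state of $C$ at each vertex $v$ to one of two local states, the relevant pair being determined by whether $e\in P$, whether $e^*\in P^*$, or neither; but since a medial edge is shared by two vertices of $\GM$, these binary choices are coupled, and a configuration $C$ in the fibre is precisely a solution of a system of ``same status / opposite status'' constraints on the medial edges (one pair of constraints contributed by each $v$). Because $\GM$ is connected, the solution set of such a system, if nonempty, is a $\ZZ/2\ZZ$-torsor; and its nonemptiness --- the absence of an inconsistent cycle of ``opposite status'' constraints --- is exactly the condition $[P\cup P^*]=0$. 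Equivalently, a configuration in the fibre is the same datum as a checkerboard $2$-colouring of $\Sigma\setminus(P\cup P^*)$, of which there are exactly two, interchanged by the global colour swap. Either way one obtains exactly two $6$-vertex configurations mapping to $(P,P^*)$.

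The step I expect to be the main obstacle is this last passage from the local constraints to the global count: identifying the obstruction to solvability with the class $[P\cup P^*]$, and checking that the constraint system is connected, both require care on a surface of genus $g$. The homological bookkeeping in the first part is delicate too, but producing the explicit $2$-chain $\sum_{s\in C}D_s$ sidesteps any actual homology computation there.
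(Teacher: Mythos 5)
Your proof is correct and takes essentially the same route as the paper's: your explicit bounding $2$-chain $\sum_{s\in C}D_s$ is a rigorous rendering of the paper's observation that $P\cup P^*$ is the boundary of the clusters of present edges, and your constraint-system/torsor count of the fibre is, as you note yourself, equivalent to the checkerboard $2$-colouring of $\Sigma\setminus(P\cup P^*)$ that the paper uses for the converse.
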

\begin{proof}
A 6-vertex configuration consists of clusters of present/absent edges, see
Figure \ref{fig:6vertexMapping}. The primal/dual edge configuration assigned by
the mapping
consists of the boundary of those clusters. It is thus a polygon
configuration with $0$ homology class. A primal and the corresponding dual
edge
cannot intersect, since that would correspond to a configuration on $\GM$ where
around a vertex there is alternatively one edge present, one edge absent, then
again one present, one absent, which is a forbidden local configuration for the
6-vertex model.

Conversely suppose we are given a pair of polygon configurations $(P,P^*)$ as
above. The fact
that the homology class of $P\cup P^*$ in $H_1(\Sigma;\ZZ/2\ZZ)$ is $0$, 
exactly means that it is the boundary of
domains that can be painted alternatively in two colors consistently. Put all
edges of $\GM$ in domains of one color, and remove all edges in domains
of the other color. In this way, one exactly obtains two valid
configurations of the
6-vertex model, one being the complement of the other, depending on which color
is used to represent present edges, see also Figure \ref{fig:6vertexMapping}.
\end{proof}

Let us denote by $\P^{0}(G\cup G^*)$ the set of pairs $(P,P^*)$
of polygon configurations of $G$ and $G^*$ respectively, such
that the union $P\cup P^*$ has $0$ homology class in $\homol{1}$.

Recall that to every edge $e$ of the primal graph (resp. $e^*$ of the dual
graph), corresponds a vertex $v$ of the medial graph, which we denote by $v(e)$
(resp. $v(e^*)$). Using this fact, one can naturally define a weight function
on edges of $G$ and $G^*$:
\begin{equation*}
\forall e\in G,\quad a_e:=A_{v(e)};\quad  \forall e^*\in G^*,\quad
b_{e^*}:=B_{v(e^*)}.
\end{equation*}
With this choice of weights and using Lemma \ref{lem:mapping6V}, we obtain the
following.
\begin{lem}\label{lem:6V}
\begin{equation*}
\Z6V(\GM,(A,B))=2\sum_{\{(P,P^*)\in\P^{0}(G\cup
G^*):\,P\cap
P^*=\emptyset\}}\prod_{e\in P}a_{e}\,\prod_{e^*\in P^*}b_{e^*}. 
\end{equation*}
\end{lem}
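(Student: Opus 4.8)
The plan is to deduce Lemma~\ref{lem:6V} directly from Lemma~\ref{lem:mapping6V} by tracking weights through Mapping~I. First I would recall that the partition function $\Z6V(\GM,(A,B))$ is by definition the sum over all $6$-vertex configurations $\omega$ of the product of local weights; with the normalization $C_v=1$, each vertex $v$ contributes $A_v$ if its local configuration is of type $A$, $B_v$ if it is of type $B$, and $1$ if it is of type $C$ (using the even-degree-subgraph representation of Figure~\ref{fig:6vconfigs}). So the first step is purely notational: write $\Z6V(\GM,(A,B))=\sum_\omega \prod_{v} w_v(\omega)$ and identify the three vertex types.

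\medskip

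Next I would use Lemma~\ref{lem:mapping6V} to organize this sum according to the image pair $(P,P^*)\in\P^0(G\cup G^*)$ with $P\cap P^*=\emptyset$. By the second bullet of that lemma, each such pair has exactly two preimages under Mapping~I (one the complement of the other), which accounts for the factor $2$ in the statement. The key local bookkeeping step is then to check, configuration by configuration in Figure~\ref{fig:6vertexMapping}, how the type of a vertex $v$ of $\GM$ corresponds to the presence of the primal edge $e$ with $v=v(e)$, or the dual edge $e^*$ with $v=v(e^*)$, in the pair $(P,P^*)$. Concretely: a type-$C$ vertex is one where the $6$-vertex configuration has two adjacent present edges and two adjacent absent edges in a way that Mapping~I inserts \emph{no} edge through $v$; a type-$A$ vertex inserts the primal edge $e$ at $v=v(e)$, contributing $A_{v(e)}=a_e$; and a type-$B$ vertex inserts the dual edge $e^*$ at $v=v(e^*)$, contributing $B_{v(e^*)}=b_{e^*}$. (One must double-check the precise matching of which of the two "two-adjacent-edges" patterns gets called type $A$ versus type $B$; this is fixed by the weight conventions of Figure~\ref{fig:6vconfigs} and the definitions $a_e:=A_{v(e)}$, $b_{e^*}:=B_{v(e^*)}$.) Since the two preimages of $(P,P^*)$ have the same set of inserted edges — complementing the $6$-vertex configuration swaps present and absent edges but inserts edges at exactly the same separating locations — both preimages carry the identical weight $\prod_{e\in P}a_e\,\prod_{e^*\in P^*}b_{e^*}$.

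\medskip

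Assembling these observations: grouping the sum over $\omega$ by its image $(P,P^*)$, using that there are exactly two $\omega$ with that image and that each contributes $\prod_{e\in P}a_e\prod_{e^*\in P^*}b_{e^*}$ (the type-$C$ vertices contributing trivial factors $1$ and hence not appearing in the product), yields
\[
\Z6V(\GM,(A,B))=\sum_{\omega}\prod_v w_v(\omega)
=2\!\!\sum_{\{(P,P^*)\in\P^0(G\cup G^*):\,P\cap P^*=\emptyset\}}\prod_{e\in P}a_e\,\prod_{e^*\in P^*}b_{e^*},
\]
which is the claim. I expect the only real subtlety — the ``main obstacle'' — to be the careful local case analysis matching $6$-vertex types to inserted primal/dual edges and verifying that complementary configurations produce the same inserted edge set with the same weight; everything else is repackaging Lemma~\ref{lem:mapping6V}. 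A secondary point to state explicitly is that the weight conventions $w_v$ depend only on the local $6$-vertex pattern (equivalently, only on whether an edge of $G$ or $G^*$ is inserted at $v$), so the global product factorizes over $P$ and $P^*$ exactly as written; this is immediate from the product form of the $6$-vertex Boltzmann weight and the vertex-by-edge correspondence $v\leftrightarrow e$, $v\leftrightarrow e^*$.
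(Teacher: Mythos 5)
Your proposal is correct and is essentially the argument the paper leaves implicit: Lemma~\ref{lem:6V} is stated there as an immediate consequence of Lemma~\ref{lem:mapping6V} together with the weight identifications $a_e=A_{v(e)}$, $b_{e^*}=B_{v(e^*)}$, which is exactly your grouping of $6$-vertex configurations by their image $(P,P^*)$, the factor $2$ from the two complementary preimages, and the vertex-by-vertex weight bookkeeping. One slip in your local case analysis, though it does not affect the conclusion: a type-$C$ vertex (weight $C_v=1$) is \emph{not} a vertex with two adjacent present and two adjacent absent edges --- any such vertex has an edge of $G$ or $G^*$ inserted by Mapping~I --- but rather a vertex whose four medial edges are all present or all absent, which is precisely the case where no primal or dual edge is inserted; the two ``two-adjacent-edges'' patterns are exactly the type-$A$ and type-$B$ vertices contributing $a_e$ and $b_{e^*}$ respectively, and complementary patterns carry equal weight, so your weight factorization and the final identity stand as written.
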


\subsection{Quadri-tilings and 6-vertex model}\label{sec:52}

Let us define yet another graph built from the graph $G$. The
\emph{quadri-tiling} graph of $G$, denoted by $\GQ$, is the 
decorated graph obtained from $\GM$ by
replacing every vertex by a decoration which is a quadrangle. The graph $\GQ$ is
bipartite and can be drawn on the same surface as $G$. Edges shared by $\GQ$
and $\GM$ are referred to as \emph{external} edges, and those inside the
decorations as \emph{internal}.

\begin{figure}[ht]
\centering
  \includegraphics[height=2cm]{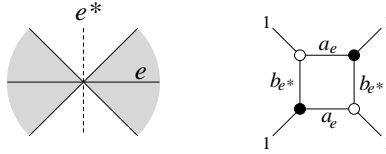}
  \caption{Decoration of a vertex of the medial graph $\GM$ (left) to obtain a
piece  of the quadri-tiling graph $\GQ$ (right).}
  \label{fig:medial-quadri}
\end{figure}

A \emph{dimer configuration} of $\GQ$, also known as a
\emph{perfect matching} is a spanning subgraph of $\GQ$ where every vertex has
degree
exactly one. Let us
denote by $\M(\GQ)$ the set of dimer configurations of the graph $\GQ$.
In a particular decoration of $\GQ$, a dimer configuration of $G$ looks like one
of the seven possibilities represented in Figure~\ref{fig:local_quadri} (top).

Assigning positive weights $(w_e)_{e\in \EQ}$ to edges of $\GQ$,
the \emph{dimer Boltzmann measure}, denoted $\PPquadri$, is defined by:
$$
\forall M\in\M(\GQ),\quad\PPquadri(M)=\frac{\prod_{e\in M}w_e}{\Zquadri(\GQ,w)},
$$
where $\Zquadri(\GQ,w)=\sum_{M\in \M(\GQ)}\prod_{e\in M}w_e$ is the dimer
partition function. This defines a model of
statistical mechanics, called the \emph{dimer model} on $\GQ$.

\textbf{Mapping II} \cite{WuLin,Dubedat}. Requiring exterior edges to match
yields a
mapping from
dimer configurations of $\GQ$ to 6-vertex configurations of $\GM$; see Figure
\ref{fig:local_quadri}. This mapping between local configurations is one-to-one except in the empty edge case
where this mapping is two-to-one.

\begin{figure}[ht]
  \centering
  \includegraphics[width=12cm]{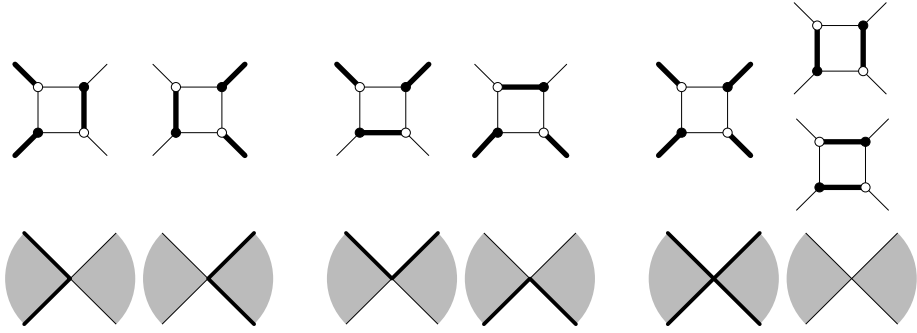}
  \caption{The local configurations of a quadri-tiling and the corresponding
  local 6-vertex configurations.}
  \label{fig:local_quadri}
\end{figure}

We now choose weights of edges in a specific way. Let $A$ and
$B$ be positive functions on vertices of the medial graph $\GM$, defining
weights of local 6-vertex configuration; and let $a$ and $b$ be the induced
weight functions on edges of $G$ and $G^*$ respectively. The
weight function $w^{a,b}$ on edges of $\GQ$ is defined as follows, see also
Figure \ref{fig:medial-quadri}:
\begin{equation*}
w^{a,b}_e=
\begin{cases}
1&\text{if $e$ is an external edge}\\
a_{\bar{e}}&\text{if $e$ is an interior edge, parallel to a primal edge
$\bar{e}$} \\
b_{\bar{e}^*}&\text{if $e$ is an interior edge, parallel to a dual edge
$\bar{e}^*$}.
\end{cases}
\end{equation*}
Let us denote by $\Zquadri(\GQ,(A,B))$ the corresponding partition function.
From now
on, we suppose that local weights of the 6-vertex model satisfy
the relation:
\begin{equation*}
\forall v\in V(\GM),\quad A_v^2 +B_v^2 =1.
\end{equation*}
This implies that, $\forall v\in V(\GM),\;  \Delta_v = \frac{A_v^2 + B_v^2 -
C_v^2 }{2 A_v B_v} = 0,$ \emph{i.e.}, the model is \emph{free-fermionic}.

Using the weight functions $a$ and $b$, the free-fermionic condition can be
rewritten as:
\begin{equation}\label{equ:freefermion}
\forall e\in E,\quad a_e^2+b_{e^*}^2=1.
\end{equation}
With this choice of weight, we thus obtain the following.
\begin{lem}\label{lem:quadri}
When local weights of the 6-vertex model satisfy the free-fermionic relation,
Mapping~II is
weight preserving and,
$$
\Zquadri(\GQ,(A,B))=\Z6V(\GM,(A,B)).
$$ 
\end{lem}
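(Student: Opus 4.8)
Looking at this problem, I need to prove Lemma~\ref{lem:quadri}: when the free-fermionic condition holds, Mapping~II is weight-preserving, hence the quadri-tiling and 6-vertex partition functions coincide.

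The plan is to verify the weight-preservation claim locally, decoration by decoration, using the correspondence displayed in Figure~\ref{fig:local_quadri}. First I would recall that Mapping~II is determined by requiring that external edges of $\GQ$ match: given a dimer configuration $M$ of $\GQ$, the occupied external edges determine which edges of $\GM$ are "present" in the associated 6-vertex configuration, and around each vertex of $\GM$ this yields an even number of consecutive present edges, so the image is indeed a legitimate 6-vertex configuration. Since external edges all have weight $1$ in $w^{a,b}$, the weight of $M$ is the product over the decorations of the contributions of the internal edges matched inside each quadrangle.

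The heart of the argument is then a finite local check. Fix a vertex $v$ of $\GM$ and the corresponding quadrangle decoration in $\GQ$; the occupied external edges incident to this quadrangle are prescribed by the target 6-vertex configuration, and one enumerates the ways to complete the matching using internal edges. For the local configurations of type $A_v$ or $B_v$ there is a unique completion, contributing respectively $a_{\bar e}=A_v$ or $b_{\bar e^*}=B_v$, which matches the 6-vertex weight exactly. For the type $C_v=1$ configuration (the "empty edge" case where no external edge is occupied on one axis), the mapping is two-to-one: the quadrangle can be completed either by the pair of internal edges of weight $a_{\bar e}$ or by the pair of weight $b_{\bar e^*}$, so the total contribution is $a_{\bar e}^2 + b_{\bar e^*}^2$, which equals $C_v^2 = 1$ precisely by the free-fermionic relation~\eqref{equ:freefermion}. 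Summing the weight of each dimer configuration of $\GQ$ grouped by its image 6-vertex configuration, the local contributions multiply across decorations, and one recovers $\sum_{\text{6V config}} \prod_v (\text{weight of local config at } v)$, which is $\Z6V(\GM,(A,B))$.

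The main obstacle is bookkeeping rather than conceptual: one must be careful that the two axes of a quadrangle are correctly identified as "parallel to a primal edge $\bar e$" versus "parallel to a dual edge $\bar e^*$", so that the weights $a_{\bar e}$ and $b_{\bar e^*}$ are attached to the right internal edges, and that the recipe in Figure~\ref{fig:local_quadri} is consistent with the $A$/$B$/$C$ labeling in Figure~\ref{fig:6vconfigs}. A secondary point worth stating explicitly is that the $2$-to-$1$ multiplicity on the dimer side, combined with $A_v^2+B_v^2=1$, is exactly what makes the weights match in the $C$-type case; this is where the free-fermionic hypothesis is genuinely used, and without it the identity would fail. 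Once the local dictionary is pinned down, the rest is a routine factorization over the decorations.
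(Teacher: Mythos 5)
Your argument is correct and is essentially the local check that the paper leaves implicit: since external edges carry weight $1$, the $A$- and $B$-type local 6-vertex configurations have unique dimer completions of weights $a_{\bar e}=A_v$ and $b_{\bar e^*}=B_v$, while the empty case is exactly where Mapping~II is two-to-one and the two completions contribute $a_{\bar e}^2+b_{\bar e^*}^2=1=C_v$ by the free-fermionic relation \eqref{equ:freefermion}. The only case you leave tacit is the all-external-edges configuration, whose unique completion has weight $1=C_v$, so nothing is lost and your factorization over decorations gives the stated equality of partition functions.
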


Let us now choose weights of edges of $\GQ$ to depend on coupling constants
$(J_{e^*})$ of the double Ising model:
\begin{equation}\label{equ:quadweights}
\forall e\in E,\;a_e=
\frac{2e^{-2J_{e^*}}}{1+e^{-4J_{e^*}}},\,\text{ and }\;\;
\forall e^*\in E^*,\;b_{e^*}=\frac{{1-e^{-4J_{e^*}}}}{1+e^{-4J_{e^*}}}.
\end{equation}
Then,
it is straightforward to check that $a$ and $b$ satisfy the free-fermionic condition~\eqref{equ:freefermion}:
\begin{align*}
\forall e\in E,\;
a_e^2+b_{e^*}^2=\frac{4e^{-4J_{e^*}}+(1-e^{-4J_{e^*}})^2}{(1+e^{-4J_{e^*}})^2}
=1. 
\end{align*}

It should be noted that in the more general case of the Ashkin--Teller model,
when spins of the two Ising models interact, the mapping with the 6-vertex model
still holds~\cite{Nienhuis,Saleur}, but the model is not free-fermionic anymore.

Since weights $a$ and $b$ depend on coupling constants $(J_{e^*})$, we set
$J$ as argument for the corresponding dimer model partition function.

Recall that the mixed contour expansion of the double Ising partition  function,
see Corollary~\ref{prop:mixedcontour}, involves pairs of non-intersecting primal
and dual polygon configurations, each of which has
0 homology class in $\homol{1}$. We want to take the same restriction here.

Consider a dimer configuration $M$ of the graph $\GQ$, then Mapping II assigns
to $M$
a 6-vertex configuration, and Mapping I assigns to this 6-vertex configuration
a pair of non-intersecting polygon configuration of $\P^{0}(G\cup G^*)$. Let us
denote this pair by $\poly(M)=(\poly_1(M),\poly_2(M))$. 

We restrict ourselves to
dimer configurations $M$ such that $\poly_1(M)$ has 0 homology class, and denote
by
$\M^0(\GQ)$ this set. Note that since
the superimposition $\poly_1(M)\cup\poly_2(M)$ has $0$ homology class, this
automatically implies that $\poly_2(M)$ also has $0$ homology. Let $\PPquadri^0$
be the corresponding dimer Boltzmann-measure, and $\Zquadri^0(\GQ,J)$
be the corresponding partition function.

Let $(P,P^*)\in \P^0(G)\times\P^0(G^*)$ such that $P\cap P^*=\emptyset$. Denote
by $\Wquadri[\poly=(P,P^*)]$ the contribution of the set: 
$$
\{M\in\M(\GQ):\,\poly(M)=(P,P^*)\}\subset \M^0(G),
$$ 
to the partition function $\Zquadri^0(\GQ,J)$. Then, as a consequence of
Lemmas \ref{lem:6V} and \ref{lem:quadri}, we have:

\begin{prop}\label{prop:quadritilings}
When weights assigned to edges of the graph $\GQ$ are chosen as in Equation
\eqref{equ:quadweights}, we have for all $(P,P^*)\in \P^0(G)\times \P^0(G^*)$,
such that $P\cap P^*=\emptyset$:
$$
\Wquadri[\poly=(P,P^*)]=2\Bigl(\prod_{e\in P} 
 \frac{2e^{-2J_{\dual
    e}}}{1+e^{-4J_{\dual e}}} \Bigr) 
    \Bigl(\prod_{\dual e  \in \dual P}  \frac{1-e^{-4J_{\dual
    e}}}{1+e^{-4J_{\dual e}}} \Bigr),
$$
Moreover, the dimer model partition function can be written as:
$$
\Zquadri^0(\GQ,J)=2\sum_{\substack{\{(P,P^*)\in
\P^{0}(G)\times
\P^{0}(\dual G):\, 
  P\cap \dual P=\emptyset\} }} \Bigl(\prod_{e\in P} 
 \frac{2e^{-2J_{\dual
    e}}}{1+e^{-4J_{\dual e}}} \Bigr) 
    \Bigl(\prod_{\dual e  \in \dual P}  \frac{1-e^{-4J_{\dual
    e}}}{1+e^{-4J_{\dual e}}} \Bigr)
$$
and the probability measure $\PPquadri^0$ induces a probability measure on
polygon configurations of $\P^0(G)$, given by:
$$
\PPquadri^0[\poly_1=P]=\frac{\Bigl( \prod_{e\in
P}\frac{2e^{-2J_{\dual
    e}}}{1+e^{-4J_{\dual e}}} \Bigr) 
\sum\limits_{\{P^*\in\P^0(G^*):\,P\cap P^*=\emptyset\}}
\Bigl(    \prod_{\dual e  \in \dual P} 
\frac{1-e^{-4J_{\dual e}}}{1+e^{-4J_{\dual e}}}
\Bigr)}{\sum\limits_{\substack{\{(P,P^*)\in \P^{0}(G)\times
\P^{0}(\dual G):\, 
  P\cap \dual P=\emptyset\} }} \Bigl(\prod_{e\in P} 
 \frac{2e^{-2J_{\dual
    e}}}{1+e^{-4J_{\dual e}}} \Bigr) 
    \Bigl(\prod_{\dual e  \in \dual P}  \frac{1-e^{-4J_{\dual
    e}}}{1+e^{-4J_{\dual e}}} \Bigr)}.
$$
\end{prop}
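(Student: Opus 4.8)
The plan is to derive the three assertions directly from Lemmas~\ref{lem:6V} and~\ref{lem:quadri}, keeping track of the fibers of the two mappings and of the homology constraint defining $\M^0(\GQ)$.

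Fix a pair $(P,P^*)\in\P^0(G)\times\P^0(G^*)$ with $P\cap P^*=\emptyset$. By construction $\poly$ is the composition of Mapping~II (from dimer configurations of $\GQ$ to 6-vertex configurations of $\GM$) with Mapping~I (from 6-vertex configurations to non-intersecting pairs of $\P^0(G\cup G^*)$). Since the weights~\eqref{equ:quadweights} satisfy the free-fermionic condition~\eqref{equ:freefermion}, Lemma~\ref{lem:quadri} tells us that Mapping~II is weight-preserving, so for every 6-vertex configuration $C$ of $\GM$ the total $w^{a,b}$-weight of the dimer configurations mapped to $C$ equals the 6-vertex weight of $C$; this is exactly where $a_e^2+b_{e^*}^2=1$ is used, to treat the empty-edge local case, which is the only one where Mapping~II is two-to-one. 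By Lemma~\ref{lem:mapping6V} there are exactly two 6-vertex configurations mapped by Mapping~I to $(P,P^*)$, related by global complementation; and since the 6-vertex weight at a vertex $v$ equals $A_v=a_e$ when the primal edge $e$ lies in $P$, equals $B_v=b_{e^*}$ when the dual edge $e^*$ lies in $P^*$, and equals $C_v=1$ otherwise, both of these configurations carry 6-vertex weight $\prod_{e\in P}a_e\prod_{e^*\in P^*}b_{e^*}$. Summing over these two fibers gives
\begin{equation*}
\Wquadri[\poly=(P,P^*)]=2\prod_{e\in P}a_e\prod_{e^*\in P^*}b_{e^*},
\end{equation*}
and substituting the values~\eqref{equ:quadweights} of $a_e$ and $b_{e^*}$ yields the first displayed identity.

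For the partition function, I would note that $M\in\M^0(\GQ)$ precisely when $\poly_1(M)$ has $0$ homology class, equivalently when $\poly_2(M)$ does, since $\poly_1(M)\cup\poly_2(M)$ always has trivial homology by Lemma~\ref{lem:mapping6V}; hence $\M^0(\GQ)$ is partitioned according to the value of $\poly(M)$, which ranges over all non-intersecting pairs $(P,P^*)\in\P^0(G)\times\P^0(G^*)$, and summing $\Wquadri[\poly=(P,P^*)]$ over all such pairs gives the stated expression for $\Zquadri^0(\GQ,J)$. The formula for $\PPquadri^0[\poly_1=P]$ then follows by summing $\Wquadri[\poly=(P,P^*)]$ over the dual polygon configurations $P^*$ disjoint from the fixed $P$ and dividing by $\Zquadri^0(\GQ,J)$, the common factor $2$ cancelling. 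The computations are routine; the only point that needs care is the bookkeeping of the homology and non-intersection constraints along the composition $\poly=\text{Mapping I}\circ\text{Mapping II}$, that is, checking that restricting to $\M^0(\GQ)$ on the dimer side corresponds exactly to restricting to $0$-homology non-intersecting pairs on the polygon side, and that the two-to-one steps of Mapping~I and Mapping~II contribute their multiplicities without overlap.
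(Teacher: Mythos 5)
Your proposal is correct and follows essentially the same route as the paper, which states Proposition~\ref{prop:quadritilings} directly as a consequence of Lemmas~\ref{lem:6V} and~\ref{lem:quadri}: you simply make explicit the fiber counting of Mappings~I and~II, the role of the free-fermionic condition in the two-to-one empty-edge case, and the equivalence between restricting to $\M^0(\GQ)$ and restricting to $0$-homology non-intersecting pairs, all of which match the paper's intended argument.
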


Combining Corollary \ref{prop:mixedcontour} and Proposition
\ref{prop:quadritilings} yields the following.

\begin{thm}\label{thm:main1inside}
$\,$
\begin{itemize}
\item The double Ising partition function and the dimer model partition function
are equal up to an explicit constant:
$$
\Zdising(G^*,J)=2^{|V^*|+2g}\Bigl(\prod_{e\in
E}\cosh(2J_{e^*})\Bigr)\Zquadri^0(\GQ,J).
$$
\item XOR-polygon configurations of the double Ising model on
$G^*$ have the same law as $\poly_1$ configurations of the corresponding dimer
model on the bipartite graph $\GQ$:
$$
\forall P\in\P^{0}(G),\;\PPdising[\XOR=P]=\PPquadri^0[\poly_1=P].
$$
\end{itemize}
\end{thm}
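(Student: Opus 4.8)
The plan is to combine the two combinatorial expansions already at hand, since both the double Ising partition function and the quadri-tiling partition function $\Zquadri^0$ have been rewritten as a universal constant times a sum over the \emph{same} index set, namely pairs $(P,P^*)\in\P^0(G)\times\P^0(G^*)$ with $P\cap P^*=\emptyset$, with the same weight $\frac{2e^{-2J_{e^*}}}{1+e^{-4J_{e^*}}}$ on each primal edge of $P$ and $\frac{1-e^{-4J_{e^*}}}{1+e^{-4J_{e^*}}}$ on each dual edge of $P^*$. Denote this common sum by
$$
S:=\sum_{\substack{(P,P^*)\in\P^0(G)\times\P^0(G^*)\\ P\cap P^*=\emptyset}}\Bigl(\prod_{e\in P}\frac{2e^{-2J_{e^*}}}{1+e^{-4J_{e^*}}}\Bigr)\Bigl(\prod_{e^*\in P^*}\frac{1-e^{-4J_{e^*}}}{1+e^{-4J_{e^*}}}\Bigr).
$$

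For the first assertion, I would invoke Corollary~\ref{prop:mixedcontour}, which gives $\Zdising(G^*,J)=\C_{\mathrm{I}}\,S$ with $\C_{\mathrm{I}}=2^{|V^*|+2g+1}\bigl(\prod_{e\in E}\cosh(2J_{e^*})\bigr)$, together with Proposition~\ref{prop:quadritilings}, which gives $\Zquadri^0(\GQ,J)=2\,S$. Eliminating $S$ between these two identities yields
$$
\Zdising(G^*,J)=2^{-1}\C_{\mathrm{I}}\,\Zquadri^0(\GQ,J)=2^{|V^*|+2g}\Bigl(\prod_{e\in E}\cosh(2J_{e^*})\Bigr)\Zquadri^0(\GQ,J),
$$
which is exactly the claimed relation between partition functions.

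For the second assertion, fix $P\in\P^0(G)$ and compare the two ratios produced upstream. Corollary~\ref{prop:mixedcontour} expresses $\PPdising[\XOR=P]$ as $\bigl(\prod_{e\in P}\frac{2e^{-2J_{e^*}}}{1+e^{-4J_{e^*}}}\bigr)$ times $\sum_{\{P^*\in\P^0(G^*):\,P\cap P^*=\emptyset\}}\prod_{e^*\in P^*}\frac{1-e^{-4J_{e^*}}}{1+e^{-4J_{e^*}}}$, divided by $S$; Proposition~\ref{prop:quadritilings} gives \emph{the same} expression for $\PPquadri^0[\poly_1=P]$. Equivalently, one may sum the identity $\Wquadri[\poly=(P,P^*)]=2(\cdots)$ over admissible $P^*$ and divide by $\Zquadri^0(\GQ,J)=2S$, the two factors $2$ cancelling; likewise $\PPdising[\XOR=P]=\Wdising[\mono=P]/\Zdising(J)$ with the common constant $\C_{\mathrm{I}}$ cancelling (here $\mono=\XOR$ by Lemma~\ref{lem:mono}). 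Hence $\PPdising[\XOR=P]=\PPquadri^0[\poly_1=P]$ for every $P\in\P^0(G)$.

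There is no real analytic obstacle at this stage; everything has been prepared in Sections~\ref{sec:mixedcontour} and~\ref{sec5}. The one point that deserves explicit mention — and that has already been secured — is the identification of index sets: one must know that the non-intersecting pairs $(P,P^*)$ arising as $\poly(M)=(\poly_1(M),\poly_2(M))$ from dimer configurations $M\in\M^0(\GQ)$ via Mappings~I and~II, subject to $\poly_1(M)$ having trivial homology class (which forces $\poly_2(M)$ trivial as well), are precisely the pairs appearing in the mixed contour expansion of Corollary~\ref{prop:mixedcontour}. This is what makes the symbol $S$ above literally the same object in both expansions, so that taking quotients is legitimate.
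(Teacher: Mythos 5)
Your proof is correct and is exactly the argument the paper intends: Theorem~\ref{thm:main1inside} is obtained by combining Corollary~\ref{prop:mixedcontour} and Proposition~\ref{prop:quadritilings}, eliminating the common sum over non-intersecting pairs $(P,P^*)\in\P^0(G)\times\P^0(G^*)$ to compare the constants $\C_{\mathrm{I}}$ and $2$, and observing that the two probability formulas coincide term by term. Your explicit remark about the identification of the index sets via Mappings~I and~II (with $\poly_2(M)$ automatically of trivial homology once $\poly_1(M)$ is) is precisely the point already secured in Section~\ref{sec:52}, so nothing is missing.
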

Note that the first part of Theorem \ref{thm:main1inside} can also be deduced
from the results of \cite{Dubedat} and \cite{CimasoniDuminil}.

Since the quadri-tiling model is a bipartite dimer model, it can be studied in
great detail using the tools of Kasteleyn theory, of which we recall some elements of in
the next subsection. These tools can be used to study the distribution of the
XOR Ising configurations.

\subsection{Kasteleyn theory}
\label{sec:quadri_kast}

We now recall some elements of the Kasteleyn theory for bipartite dimer models
and apply it to the dimer model on $\GQ$. This simplified version for
bipartite graphs of the more general theory developed by Temperley and Fisher, Kasteleyn
\cite{TemperleyFisher,Kasteleyn} is due to Percus \cite{Percus}. The main tool is the
\emph{Kasteleyn matrix}, defined as follows in the bipartite case:
\begin{itemize}
  \item rows (resp. columns) are indexed by white (resp. black) vertices;
  \item the absolute value of an entry is 0 if the corresponding white and black
    vertices are not adjacent, and is the dimer weight of the edge formed
by these vertices when they are adjacent;
  \item signs of the entries are chosen in such a way that around all (bounded)
faces, the number of minus signs around a face has the same parity as half of
the degree
    of the face, minus~1.
\end{itemize}
If the graph $\GQ$ is planar, the partition
function of the model is, up to a global sign, the determinant of the Kasteleyn
matrix. Indeed, when expanding the
determinant of $K$ as a sum over permutations, the only non-zero
terms are those corresponding to dimer configurations, and their absolute
value is the correct weight (see for example \cite{Kenyon6} p.3). The third condition about signs is here to
compensate the signatures of the permutations, so that all the terms exactly 
have the same sign. Kasteleyn showed \cite{Kast61,Kasteleyn} that such a choice
of signs exists and is essentially unique: changing the sign of each edge around a
particular vertex still yields a choice of signs satisfying the third condition,
and one can pass from one valid choice to another by a succession of such
operations. In order to get the right global sign, one can choose a reference
dimer configuration $M_0$, giving a bijection between white and black vertices, agree
that the order chosen for rows and columns of $K$ is compatible with this
bijection, and choose signs so that all entries of $K$ corresponding to dimers of
the reference
configuration have sign $+$. 

If the graph $\GQ$ is embedded in a surface $\Sigma$ of genus $g>0$, the Kasteleyn theory is 
more complicated. There is a topological obstruction for the existence of a sign distribution on
edges giving every dimer configuration a $+$ sign in the determinant expansion
of a Kasteleyn matrix. There still exist choices of signs satisfying the third
condition for a Kasteleyn matrix, but there is not just one, as in the planar
case, but $2^{2g}$ classes of choices of
signs, yielding $2^{2g}$ non-equivalent Kasteleyn matrices.
From one of them, denoted by $K^{(0)}$, one can construct the other
non-equivalent matrices denoted by $K^{(\eps)}$, $\eps\in\homol{1}$,
by multiplying the
sign of each edge crossing a representative of $\eps$ by $-1$.

In the expansion of the determinant of each of the $2^{2g}$ matrices, there are
terms with different signs.
The sign of a dimer configuration $M$ is determined as follows. Let $M_0$ be a reference dimer configuration
of $\GQ$. Consider the superimposition $M_0\cup M$ of $M_0$ and of the dimer configuration $M$ of $\GQ$. Each 
vertex of $\GQ$ is incident to exactly two edges of
the superimposition, so that we obtain a family of non-intersecting loops and
doubled edges covering all vertices of $\GQ$. Loops of the superimposition also live 
on the surface $\Sigma$ and may have non-trivial homology in $\homol{1}$. 
Given $\eps\in\homol{1}$, the sign of the dimer configuration $M$ in the expansion of the determinant of 
$K^{(\eps)}$ depends on
$\eps$ and the homology class in $\mathbb{Z}/2\mathbb{Z}$ of the 
loops of the superimposition $M_0\cup M$. The determinant of the
Kasteleyn matrix $K^{(\eps)}$ has thus the following form:
\begin{equation*}
  \det K^{(\eps)} = \sum_{\alpha\in\homol{1}} s_{\alpha,\eps} \Zquadri^{(\alpha)},
\end{equation*}
where $\Zquadri^{(\alpha)}$ is the partition function restricted to dimer configurations 
whose superimposition with $M_0$ has homology class $\alpha$, and $s_{\alpha,\eps}$ is a sign
depending only on $\eps$ and $\alpha$.

The remarkable fact is that the linear relations between $\det K^{(\eps)}$ and
$\Zquadri^{(\alpha)}$ can be inverted explicitly and that each $\Zquadri^{(\alpha)}$ can be written as a 
linear combination of the $2^{2g}$ determinants of Kasteleyn matrices
\cite{Russes,GalluccioLoebl,Tesler,CimaReshe1,CimaReshe2}. 
Building on this result, Kenyon \cite{Ke:LocStat} obtains
an explicit expression for the dimer Boltzmann measure on dimer configurations of 
$\Zquadri^{(\alpha)}$ involving the $2^{2g}$ Kasteleyn matrices and their inverses.

Let us return to the purpose of this paper. We are looking for an explicit expression for the law of XOR-polygon
configurations of the double Ising model. By Theorem \ref{thm:main1inside}, this amounts to finding the law of 
dimer configurations $M$ of $\GQ$ such that $\poly_1(M)$ has $0$ homology class. 
The next lemma proves that by choosing the reference dimer configuration $M_0$
appropriately, the homology class of $\poly_1(M)$ is equal to the homology class of the superimposition $M_0\cup M$.

\begin{lem}\label{lem:superposition}
Let $M_0$ be the dimer configuration covering all
internal edges of decorations parallel to dual edges of $G^*$, and let
$M$ be a dimer configuration of $\GQ$. Then, the homology classes of
$M_0\cup
M$ and $\poly_1(M)$ in $\homol{1}$ are equal.
\end{lem}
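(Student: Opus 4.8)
The plan is to track, locally around each vertex $v$ of the medial graph (equivalently, around each decoration of $\GQ$), which primal and dual edges of $G$ get activated by $\poly$, and compare this with the edges of the symmetric difference $M_0\triangle M$ inside that same decoration. Since homology classes in $\homol{1}$ are determined by the $\ZZ/2\ZZ$-cycles they represent, and since both $\poly_1(M)$ and the loop collection of $M_0\cup M$ are such cycles, it suffices to show that, edge by edge on $G$, the indicator of ``$e\in\poly_1(M)$'' agrees with the indicator of ``the external edge of $\GM$ dual to $e$ (or: the internal edge of $\GQ$ parallel to $e$) lies in $M_0\triangle M$'', possibly up to adding a fixed $\ZZ/2\ZZ$-boundary; a boundary does not change the homology class, so an exact edgewise match is not even required, only a match modulo boundaries.

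First I would recall the chain of mappings: a dimer configuration $M$ of $\GQ$ determines, via Mapping~II, a $6$-vertex configuration on $\GM$ (by reading which external edges are matched), and then Mapping~I reads off, at each vertex $v$ of $\GM$ where exactly two consecutive edges are present, the primal or dual edge separating present from absent edges. Thus $\poly_1(M)$, restricted to the four primal edges incident to a fixed medial vertex $v=v(e)$ for $e$ a primal edge, is read purely from the local $6$-vertex state at $v$, hence from the local dimer picture in the decoration of $v$. Next I would inspect $M_0$: by definition $M_0$ covers exactly the internal edges of decorations that are parallel to dual edges, so in each decoration $M_0$ uses two internal (dual-parallel) edges and no external edges. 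Therefore inside a given decoration, $M_0\triangle M$ consists of the external edges used by $M$ there, together with the internal edges where $M$ and $M_0$ disagree; crucially the external edges of $M_0\triangle M$ are precisely the external edges of $M$, which is exactly the data defining the $6$-vertex configuration. Then I would run through the seven local configurations of Figure~\ref{fig:local_quadri} and check in each case that the homology contribution (the local $\ZZ/2\ZZ$-cycle, modulo local boundaries coming from the decoration) of $M_0\triangle M$ equals the local contribution of $\poly_1(M)$, i.e. the primal edges dictated by Mapping~I. The dual-edge activations of $\poly_2(M)$ will correspond to the internal-edge disagreements contributing boundary pieces plus the complementary external pattern, and these are killed modulo boundaries precisely because $M_0$ was chosen to be the dual-parallel matching rather than the primal-parallel one (choosing the other internal matching would instead identify the homology class with that of $\poly_2(M)$).

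The main obstacle is the careful bookkeeping in the ``empty edge'' case, where Mapping~II is two-to-one, and more generally ensuring that the local discrepancies between $M$ and $M_0$ inside decorations only ever contribute $\ZZ/2\ZZ$-boundaries and never a genuine nontrivial cycle that would spoil the homology match: one must argue that a decoration is a disc, so any cycle confined to the closure of finitely many decorations is null-homologous, and that the ``seam'' edges between decorations are handled consistently. I expect the cleanest way to organize this is to say that $M_0\triangle M$, as a $\ZZ/2\ZZ$-$1$-chain on $\GQ$, is homologous (in $\Sigma$) to a $1$-chain supported on external edges of $\GM$, and that this external chain, pushed off to the separating primal/dual edges exactly as in Mapping~I, is homologous to $\poly_1(M)\cup\poly_2(M)$; since $\poly_2(M)$ is a cycle on $G^*$ disjoint from $\poly_1(M)$ and both have well-defined classes, and since by construction of $\poly$ we are in the $0$-homology sector for $\poly_1\cup\poly_2$ only when $M\in\M^0(\GQ)$ — but here we want the statement for all $M$ — I would instead project onto the primal part: the primal-edge content of the separating chain is exactly $\poly_1(M)$, and the choice $M_0=$ dual-parallel matching guarantees the dual-edge content is itself a boundary in $\Sigma$ (each dual-parallel internal edge of $M_0$ bounds inside its decoration together with the relevant external edges). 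Hence $[M_0\cup M]=[\poly_1(M)]$ in $\homol{1}$, which is the claim.
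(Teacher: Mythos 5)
Your opening reduction is fine: each decoration is a disc, so replacing the internal part of $M_0\triangle M$ inside a decoration by any other chain with the same endpoints on the decoration changes the $1$-chain only by a null-homologous cycle, and hence $[M_0\cup M]$ equals the class of the $6$-vertex cycle $C$ on $\GM$ formed by the external edges of $M$ closed up inside the decorations. The gap is in what you then do with $C$. The claim that $C$ (your external chain ``pushed off to the separating primal/dual edges'') is homologous to $\poly_1(M)\cup\poly_2(M)$ is false: by Lemma~\ref{lem:mapping6V} the union $\poly_1(M)\cup\poly_2(M)$ is \emph{always} null-homologous, whereas $[M_0\cup M]$, hence $[C]$, ranges over all of $\homol{1}$ as $M$ varies (this is exactly why the sectors $\Zquadri^{(\alpha)}$ of Section~\ref{sec:quadri_kast} are nontrivial). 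The fallback step fails for the same reason: the ``dual-edge content'' of the separating chain is $\poly_2(M)$, which does not depend on $M_0$ at all and is not a boundary in general; over $\ZZ/2\ZZ$ one has $[\poly_2(M)]=[\poly_1(M)]$, so your proposed dichotomy between the two reference matchings does not even exist. Concretely, on the torus with $G=\ZZ^2$, take $M$ realizing a ``staircase'' $6$-vertex configuration winding once horizontally: then $\poly_1(M)$ is a horizontal line of primal edges, $\poly_2(M)$ a horizontal line of dual edges, so $[C]=[\poly_1(M)]=[\poly_2(M)]\neq 0$ while $[\poly_1(M)\cup\poly_2(M)]=0$. Taken literally, your chain of homologies would give $[M_0\cup M]=0$ for every $M$, which is false; so the argument does not close, even though the conclusion is correct.

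The clean way to finish --- and the route the paper takes --- is to use non-degeneracy of the intersection pairing: it suffices to check $([M_0\cup M]\,|\,\tau)=([\poly_1(M)]\,|\,\tau)$ for every $\tau\in\homol{1}$, with $\rep{\tau}$ realized as a path on $G^*$. This localizes the problem exactly where your local analysis is strongest: for a dual edge $e^*$ of $\rep{\tau}$, one has $e\in\poly_1(M)$ if and only if an odd number of the \emph{two} internal edges of the decoration parallel to $e$ belong to $M$ (note there is not a single such edge, so the correct local statement is a parity, not an indicator), and these are precisely the edges of $M_0\cup M$ crossed by $e^*$, since the edges of $M_0$ are parallel to $e^*$ and external edges are not met. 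Summing over the dual edges of $\rep{\tau}$ gives equality of the pairings, hence of the classes. If you insist on staying with chains, what you would need is that $C$ and $\poly_1(M)$ are homologous (their sum bounds), not that $C$ is homologous to $\poly_1(M)\cup\poly_2(M)$; proving that directly essentially reduces to the same crossing-parity computation, which the intersection-form argument packages without any global bookkeeping.
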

\begin{proof}
  In order to prove that the homology classes are the same, it suffices to
  prove that they give the same result when computing the intersection form
  against any homology class $\tau\in\homol{1}$.

  Let us fix such a class $\tau\in\homol{1}$. Let $\rep\tau$ be a representative
  of the class $\tau$, realized as a path on $\dual G$. We now show that the
  parity of the number of intersections between $\poly_1(M)$ and $\rep\tau$, is
  equal to the number of intersections between $M\cup M_0$ and $\rep\tau$. All
intersections occur in the interior of dual edge used by $\rep\tau$.

  Fix $\dual e$ a dual edge used by $\rep\tau$. From the mappings above, the edge
  $e$ belongs to $\poly_1(M)$ if and only if the number of interior edges
  parallel to $e$ in the corresponding rhombus covered by dimers in $M$ is odd
  (see Figures \ref{fig:6vertexMapping} and \ref{fig:local_quadri}). Since
  edges of $M_0$ are parallel to $\dual e$, the parity of the number of
  intersections with $\dual e$ will be the same for $\poly_1(M)$ as for $M\cup
  M_0$.  Since this holds for every dual edge belonging to $\rep\tau$, it holds
  for $\rep\tau$. Therefore, the homology classes for $\poly_1(M)$
  and $M\cup M_0$ are the same.\qedhere
\end{proof}

By Lemma \ref{lem:superposition}, the restricted partition function
 $\Zquadri^{(\alpha)}$ is also the partition function restricted to dimer configurations $M$ such
that $\poly_1(M)$ has homology class $\alpha$, this is in particular true for $\alpha=0$. 
As a consequence, the Kasteleyn theory for dimer models defined on graphs embedded in surfaces of genus $g>0$ described above,
yields an explicit expression for the partition function and 
for the probability measure of XOR-polygon configurations of the double Ising model.

Note that classically, 
when studying dimer models one is interested in the full partition function
$  \Zquadri=\sum_{\alpha\in\homol{1}} \Zquadri^{(\alpha)},$
which from the above discussion is a linear combination of the determinants of the $2^{2g}$ Kasteleyn matrices. 
Since considering the restricted model where the homology class of
dimer configurations is fixed is just a matter of considering another linear
combination of determinants, results for the restricted model are readily obtained from those on the full model.
This fact is used in the proof of Theorem \ref{thm:maininfinite}.

%

This discussion of the relation of signs and homology considerations is not
specific to the dimer model on $\GQ$ but applies to any dimer model where the more general Kasteleyn theory
applies, using \emph{full} Kasteleyn matrices with rows and columns indexed by
\emph{all} vertices of the graph, and Pfaffians instead of determinants. We
refer the reader to \cite{CimaReshe1} for an intrinsic geometric
interpretation of coefficients in the linear combination in this general
context. The low-temperature polygon configurations of the Ising model can be
mapped via Fisher's correspondence \cite{Fisher} to a non-bipartite dimer model.
Restricting the homology class of the polygon configurations can also be 
obtained on the dimer side with an appropriate linear combination of Pfaffians
of Kasteleyn matrices. 


\section{The double Ising model at criticality on the whole plane}
\label{sec:dIsing-iso}

After having discussed in much generality the case of finite surfaces of
genus $g$, we now
want to consider the case of infinite planar graphs.
From now on, we restrict ourselves to a special kind of graphs, the so-called
\emph{isoradial graphs}, with specific values of the coupling constants for
the Ising model.

\subsection{Isoradial graphs}\label{subsec:isoradial}

\begin{defi} An \emph{isoradial graph} \cite{Duffin,Mercat:ising,Kenyon3,KeSchlenk} is a planar
graph $G$ together with a proper embedding having the property that every bounded face is inscribed in a circle
of fixed radius, which can be taken equal to 1.
\end{defi}

The regular square, triangular and hexagonal lattices with their standard
embedding are isoradial. A fancier example is given in
Figure~\ref{fig:isoradial} (left).

\begin{figure}[ht]
\begin{center}
\includegraphics[width=12cm]{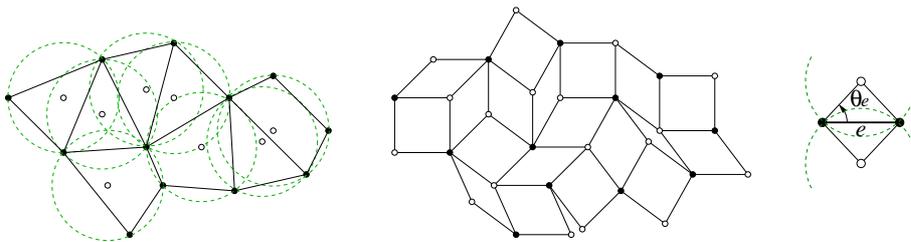}
\caption{Left: an example of an isoradial graph. Middle: the
corresponding rhombus graph. Right: the half-rhombus angle $\theta_e$
associated to an edge $e$.}\label{fig:isoradial}
\end{center}
\end{figure}
The center of the circumscribing circle of a face can be identified
with the corresponding dual vertex, implying that the dual of an isoradial graph
is also isoradial.

The dual of the medial graph $\GM$, called the \emph{diamond graph} of $G$ and
denoted by $\GR$, has as set of vertices the union of those of $G$ and
$\dual G$. There is an edge between $v\in V$ and $\dual f \in V^*$
if and only if $v$ is on the boundary of the face $f$ corresponding to the dual
vertex $\dual f$; see Figure \ref{fig:isoradial} (middle) for an example. Faces
of $\GR$ are rhombi with edge-length 1,
diagonals of which correspond to an edge of $G$ and its dual edge. To each edge
$e$ of $G$ we can therefore associate a geometric angle $\theta_e\in(0,\pi/2)$,
which is the
half-angle of the rhombus containing $e$, measured between $e$ and the edge of
the rhombus; see Figure \ref{fig:isoradial} (right).
 The family $(\theta_e)$ encodes the geometry of the embedding of
the isoradial graph.


\subsection{Statistical mechanics on isoradial graphs}

When defining a statistical mechanical model on an isoradial graph, it is
natural to relate statistical weights to the geometry of the embedding, and thus
to choose the parameters attached to an edge (coupling constants for
Ising, probability to be open for percolation, weight of an edge for dimer
models, conductances for spanning trees or random walk,\dots) to be functions of
the half-angle of that edge. For the Ising model, using
discrete integrability considerations (invariance under star-triangle transformations),
self-duality, the following expression for the interacting constants can be derived, see \cite{Baxter:exactly}:

\begin{equation*}
  J_e=J(\theta_e)=
  \frac{1}{2}\log\left( \frac{1+\sin \theta_e}{\cos \theta_e} \right).
\end{equation*}
These are also known as \emph{Yang--Baxter's coupling constants}.

This expression for $J_e$, when $\theta_e=\frac{\pi}{4}$, $\frac{\pi}{3}$ and
$\frac{\pi}{6}$, coincides with the critical value of the square,
hexagonal and triangular lattices respectively \cite{KramersWannier1,KramersWannier2}. The Ising model with these
coupling constants has been proved to be critical when the isoradial graph is bi-periodic \cite{Li:critical,CimasoniDuminil},
we therefore refer to these values as \emph{critical coupling constants}.

We suppose that the isoradial graph $G$ is infinite, in the sense that the union
of all rhombi
of the diamond graph of $G$ covers the whole plane.

Consider the corresponding bipartite dimer model on the infinite decorated
quadri-tiling graph $\GQ$. Then, the correspondence for the weights described in
Section \ref{sec:52}, Equation \eqref{equ:quadweights} yields in this
particular context:
\begin{equation*}
\forall e\in E,\;  a_e=a(\theta_e)=\cos\theta_e,\quad
\forall e^*\in E^*,\;
b_{e^*}=b(\theta_{e^*})=\cos\theta_{e^*}=\sin\theta_e.
\end{equation*}

Notice that with a particular embedding of the decoration, namely when
external edges have
length 0, and $a$ and $b$ edges form rectangles joining the mid-points of edges
of each rhombus, the quadri-tiling graph $\GQ$ is itself an isoradial
graph with rhombi of edge-length $\frac{1}{2}$, and that weights (up to
a global multiplicative factor of $\frac{1}{2}$) are those introduced by
Kenyon~\cite{Kenyon3} to define critical dimer models on isoradial graphs. 

It turns out that for the Ising and dimer models on infinite isoradial graphs
with critical weights indicated above,
it is possible to construct Gibbs probability measures
\cite{Bea1,BoutillierdeTiliere:iso_gen}, extending the Boltzmann
probability measures in the DLR sense: conditional on the configuration of
the model outside a given bounded region of the
graph, the probability measure of a configuration inside the region is
given by the Boltzmann probability measure defined by the weights above (and the
proper boundary conditions). These measures have the wonderful
property of \emph{locality}: the probability of a local event only depends on
the geometry of a neighborhood of the region where the event takes place, otherwise stated
changing the isoradial graph outside of this region does not affect the
probability.

We can therefore consider the critical Ising model (resp. dimer, in particular
quadri-tilings models) on a general
infinite isoradial graph, as being that particular Gibbs probability measures on
Ising configurations (resp. dimers configurations) of that infinite graph.

We now work with a fixed infinite isoradial graph $G$.
We denote by $\PPising^{\infty}$ the measure on configurations of the
critical Ising model on
$\dual G$. By taking two independent copies of the critical Ising model on
$\dual G$, we
get the Gibbs measure for the critical double Ising model
$\PPdising^{\infty}=\PPising^{\infty}\otimes\PPising^{\infty}$, from which XOR
contours can be constructed, as in the finite case.
We denote by $\PPquadri^{\infty}$ the Gibbs measure on dimer
configurations of the infinite graph $\GQ$.

\subsection{Loops of the critical XOR Ising model on isoradial graphs}

It turns out that the identity in law between polygon configurations of the
critical XOR Ising
model on $G^*$ and those of the corresponding bipartite dimer model on $\GQ$
remains true in the context of infinite isoradial graphs at criticality:

\begin{thm}\label{thm:maininfinite}
Let $G$ be an infinite isoradial graph. The measure induced on polygon
configurations of the critical XOR Ising model on $G^*$, and the measure
induced on
\emph{primal contours} of the corresponding critical bipartite
dimer model on $\GQ$ have
the same law: for any finite subset of edges $\E=\{e_1,\dots,e_n\}$,
\begin{equation}
  \PPdising^{\infty}[\E\subset \XOR] = \PPquadri^{\infty}[\E\subset \poly_1].
  \label{eq:eq_law_contours_critinf}
\end{equation}

\end{thm}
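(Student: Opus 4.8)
The plan is to deduce Theorem~\ref{thm:maininfinite} from its finite-volume counterpart Theorem~\ref{thm:main1inside} by an exhaustion argument, the passage to the limit being controlled by the locality of the two Gibbs measures involved.

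I would fix the finite edge set $\E=\{e_1,\dots,e_n\}$, writing $e_i^*=u_i^*v_i^*$ for the dual edge of $e_i$, and choose an exhaustion $G_1\subset G_2\subset\cdots$ of $G$ by finite, simply connected isoradial subgraphs---for instance the subgraph carried by the rhombi of the diamond graph lying within graph distance $m$ of a fixed origin---with $\E$, all the vertices $u_i^*,v_i^*$, and all the decorations at the medial vertices $v(e_i)$ already contained in $G_1$. Each $G_m$ is then regarded as embedded on the sphere by adjoining its outer face as a single disc, a boundaryless orientable surface of genus $0$; there $\relhomol{1}$ is trivial, so $\P^0(G_m)=\P(G_m)$, every dimer configuration of the quadri-tiling graph of $G_m$ lies in $\M^0$, and $\PPquadri^{0}=\PPquadri$, and the homology bookkeeping of Section~\ref{sec:quadri_kast} plays no role. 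I would equip $G_m^*$ with the critical (Yang--Baxter) coupling constants and the quadri-tiling graph of $G_m$ with the dimer weights~\eqref{equ:quadweights}, and write $\PPdising^{G_m}$ for the resulting double critical Ising Boltzmann measure on $G_m^*$---which, the homology class being trivial, is just the product of two independent critical Ising Boltzmann measures, matching $\PPdising^{\infty}=\PPising^{\infty}\otimes\PPising^{\infty}$---and $\PPquadri^{G_m}$ for the dimer Boltzmann measure.

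Applying Theorem~\ref{thm:main1inside} to $G_m$ in genus $0$ gives $\PPdising^{G_m}[\XOR=P]=\PPquadri^{G_m}[\poly_1=P]$ for every $P\in\P^{0}(G_m)$. Since $\XOR$ and every $\poly_1(M)$ lie in $\P^{0}(G_m)$, summing over the polygon configurations $P$ that contain $\E$ yields
$$
\PPdising^{G_m}[\E\subset\XOR]=\PPquadri^{G_m}[\E\subset\poly_1],\qquad m\ge 1.
$$
Both events are local. By Lemma~\ref{lem:mono} (there is no defect here), $e_i\in\XOR$ if and only if $\sigma_{u_i^*}\sigma'_{u_i^*}\ne\sigma_{v_i^*}\sigma'_{v_i^*}$, so $\{\E\subset\XOR\}$ depends only on the double spin configuration at the finitely many vertices $u_1^*,v_1^*,\dots,u_n^*,v_n^*$; and by the argument in the proof of Lemma~\ref{lem:superposition}, $e_i\in\poly_1(M)$ if and only if an odd number of the interior edges of the decoration at $v(e_i)$ parallel to $e_i$ are covered by $M$, so $\{\E\subset\poly_1\}$ depends only on the dimers inside the finitely many decorations at $v(e_1),\dots,v(e_n)$. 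These supports are fixed and lie inside $G_1$, so the locality of the critical Ising Gibbs measure~\cite{BoutillierdeTiliere:iso_gen} (hence of its independent product) and of the critical dimer Gibbs measure~\cite{Bea2} gives $\PPdising^{G_m}[\E\subset\XOR]\to\PPdising^{\infty}[\E\subset\XOR]$ and $\PPquadri^{G_m}[\E\subset\poly_1]\to\PPquadri^{\infty}[\E\subset\poly_1]$ as $m\to\infty$. Together with the displayed identity this proves~\eqref{eq:eq_law_contours_critinf}.

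I expect the only real difficulty to be analytic rather than combinatorial: Theorem~\ref{thm:main1inside} already carries the entire correspondence, and the point to check is that the spurious outer-face vertex of $G_m^*$ and the decorations truncated near the boundary of $G_m$ do not perturb the limiting probabilities of these fixed, far-away local events. This is precisely what the locality of the two critical Gibbs measures provides: the probability of a local event is determined by the geometry of a bounded neighbourhood of its support alone, so finite Boltzmann measures with any admissible boundary behaviour converge on such events to the Gibbs measure. (If one prefers to run the exhaustion through toroidal pieces of a bi-periodic $G$, one would additionally use the remark of Section~\ref{sec:quadri_kast} that fixing the homology class of the dimer configurations amounts to a different linear combination of Pfaffians of Kasteleyn matrices, so all the required locality statements descend from those for the full dimer model; on the sphere this is vacuous.)
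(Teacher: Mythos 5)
Your reduction to Theorem~\ref{thm:main1inside} and the observation that both events $\{\E\subset\XOR\}$ and $\{\E\subset\poly_1\}$ are local (spins at the endpoints of the dual edges $e_i^*$; dimers inside the decorations at the $v(e_i)$) are correct and match the spirit of the paper. The gap is in the limiting step. You exhaust $G$ by finite simply connected pieces $G_m$ viewed on the sphere and assert that $\PPdising^{G_m}[\E\subset\XOR]\to\PPdising^{\infty}[\E\subset\XOR]$ and $\PPquadri^{G_m}[\E\subset\poly_1]\to\PPquadri^{\infty}[\E\subset\poly_1]$ ``by locality''. But the locality property of \cite{Bea1,BoutillierdeTiliere:iso_gen} is a statement about the \emph{infinite-volume} Gibbs measures: the probability of a local event is unchanged if the isoradial graph is modified outside a neighbourhood of the event. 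It says nothing about convergence of finite-volume Boltzmann measures with free (or outer-face) boundary conditions along an arbitrary planar exhaustion to these Gibbs measures. For bipartite dimer models this is exactly the delicate point: boundary conditions of a simply connected domain can influence bulk local statistics (limit-shape/arctic-type phenomena), so ``finite Boltzmann measures with any admissible boundary behaviour converge on such events to the Gibbs measure'' is not available and is in general false without an argument; similarly, on the Ising side the measure $\PPising^{\infty}$ of \cite{BoutillierdeTiliere:iso_gen} is not constructed as a limit of free-boundary measures on simply connected pieces, so that convergence would also have to be proved, not invoked. In addition, a genus-$0$ truncation creates a specific boundary structure (the outer-face dual vertex coupled to all boundary spins, truncated decorations), whose effect is precisely what needs to be controlled and is not handled by locality.

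The paper's route avoids this by taking limits only along the exhaustions by which the infinite measures are actually \emph{defined}: for bi-periodic $G$ the toroidal exhaustion $\GQ_n=\GQ/n\Lambda$, where weak convergence of the dimer Boltzmann measures is known from \cite{KOS} and of the Ising (Fisher-graph dimer) measures from \cite{BoutillierdeTiliere:iso_perio}. This forces the genus-$1$ version of Theorem~\ref{thm:main1inside}, whose statement involves Boltzmann measures restricted to homology class $0$; the point of Section~\ref{sec:quadri_kast} is that this restriction only changes the linear combination of Kasteleyn determinants/inverses, hence neither the convergence proof nor the limit. The non-periodic case is then deduced from the periodic one by locality together with Theorem~5 of \cite{Bea1} (any simply connected piece of a rhombus graph embeds in a periodic one): here locality is used to compare two \emph{infinite} Gibbs measures on graphs agreeing on a large ball, which is what it actually provides. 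To salvage your planar-exhaustion argument you would need an independent proof that local statistics of the finite-volume genus-$0$ measures converge to $\PPquadri^{\infty}$ and $\PPdising^{\infty}$, which is a substantially harder statement than anything quoted in the paper.
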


\begin{proof}
Suppose first that the graph $G$ is infinite and bi-periodic, invariant under the translation lattice $\Lambda$.
Then the graph $\GQ$ is also infinite and bi-periodic. 
The infinite volume Gibbs measure $\PPquadri^{\infty}$ on dimer configurations of the bipartite graph $\GQ$ is constructed
in \cite{KOS} as the weak limit of the Boltzmann measures on the natural toroidal exhaustion $\GQ_n=\GQ/n\Lambda$ of 
the infinite bi-periodic graph $\GQ$. The infinite volume Gibbs measure $\PPising^{\infty}$ on 
low temperature Ising polygon configurations of $G$ is constructed in \cite{BoutillierdeTiliere:iso_perio}. 
It uses Fisher's correspondence relating the low temperature expansion of the Ising model and 
the dimer model on a non-bipartite decorated version of the graph $G$. The construction then also 
consists in taking the weak limit of the dimer Boltzmann measures but, since the dimer graph is non-bipartite,
more care is required in the proof of the convergence.

If we apply Theorem~\ref{thm:main1inside} to the specific case of the double 
Ising model on a toroidal, isoradial graph
$G_n^*$ with critical coupling constants, we know that on $G_n$, XOR polygon configurations have the same law as 
  primal polygon configurations of the corresponding bipartite dimer model on $\GQ_n$. The laws involved are the Boltzmann
  measures on configurations having restricted homology. But from Section \ref{sec:quadri_kast}, we know that restricting the homology
  amounts to taking other linear combinations of the Kasteleyn matrices and their inverses. This does neither change the 
  proof of the convergence of the Boltzmann measures, nor the limit. Having equality in law for every $n$ thus
  implies equality in the weak limit.
  
Suppose now that the graph $G$ is infinite but not necessarily periodic. The infinite volume Gibbs measure
$\PPquadri^{\infty}$ of the critical dimer model on the bipartite graph $\GQ$ is constructed in \cite{Bea1}. 
The construction has two main ingredients: the \emph{locality property} meaning that the probability of 
a local event only depends on the geometry of the embedding of $\GQ$ in a bounded domain containing edges involved in the
event, and Theorem 5 of \cite{Bea1} which states that any simply connected subgraph of an infinite rhombus graph
can be embedded in a periodic rhombus graph. The infinite volume Gibbs measure $\PPising^{\infty}$ of the 
low temperature representation of the critical Ising model on $G$ is constructed in \cite{BoutillierdeTiliere:iso_gen}
using the same argument. This implies that one can identify the probability of local events on a non-periodic graph $G$ with
the one of a periodic graph having a fundamental domain coinciding with $G$ on a ball sufficiently large to 
contain a neighborhood of the region of the graph involved in the event.  As a consequence, 
equality in law still holds in the non-periodic case.
\end{proof}

\section{Height function on quadri-tilings}
\label{sec:height}

Dimer configurations of the quadri-tiling graph $\GQ$, like
all bipartite planar dimer models, can be interpreted as random surfaces, via
a \emph{height function}. It is the main ingredient to relate the previous
results connecting XOR loops and dimers with Wilson's conjecture.

\subsection{Definition and properties of the height function}
Let us now recall the definition of height function,
used in \cite{Bea1}.
A dimer configuration $M$ of a planar bipartite graph can be
interpreted as a unit flow $\alpha_M$, flowing by 1 along each matched edge of
$M$, from the white vertex to the black one. It is a function on edges
having divergence $+1$ at each white vertex and $-1$ at each black vertex.
Subtracting from $\alpha_M$ another flow with the same divergence
at every vertex, yields a divergence-free flow, whose dual is the differential
of a function on faces of this graph.

There is a natural candidate for this unit reference flow: since in a dimer
configuration there is exactly one dimer incident to every vertex, the
sum over all edges incident to any given vertex of the probability that this
edge is covered by a dimer, is equal to 1. This means that the 
flow $\alpha_0$, flowing by $\PPquadri^{\infty}(e)$\footnote{The graph $\GQ$ is
isoradial and infinite, and the
weights for the  quadri-tilings are critical. So in this particular context, we
know \cite{Kenyon3} that the probability of an edge is given by $\theta/\pi$,
where
$\theta$ is the half-angle of the rhombus containing that edge.}
from the white vertex to the black one along each edge $e$ of the graph, is a
flow with divergence $+1$ (resp. $-1$) at every white (resp. black) vertex.

The height function $h$ on quadri-tilings is defined as follows.
For every dimer configuration $M$ of $\GQ$, $h^M$ is
a function on faces of $\GQ$, such that for every pair of neighboring faces $f$
and $f'$ of $\GQ$ sharing an edge $e$, with the additional property that when
traversing $e$ from $f$ to $f'$, the black vertex of $e$ is on the left:
\begin{equation*}
  h^M(f')-h^M(f)= \alpha_M(e)-\alpha_0(e).
\end{equation*}

When faces $f$ and $f'$ are not incident, choose a path
$f=f_0,f_1,\ldots,f_n=f'$ in the dual graph joining $f$ and $f'$, then:
\begin{equation*}
h^M(f')-h^M(f)=\sum_{i=0}^{n-1}(h^M(f_{i+1})-h^M(f_i)).
\end{equation*}
This definition is consistent, \emph{i.e.}, independent of the choice of path from
$f$ to $f'$, because the flow $\alpha_M-\alpha_0$ is
divergence free; it determines $h^M$ up to a
global additive constant, which can be fixed by saying that the
height at a particular given face of $\GQ$ is $0$.
Faces of $\GQ$ are split into three distinct subsets, those corresponding to:
vertices of $G$, vertices of the dual $G^*$ and edges of $G$ (or $G^*$).
We suppose for the sake of definiteness that the face where the height is fixed at $0$ corresponds to 
some particular vertex of $G$.

Denote by $h_V^M$ (resp. $h_{\dual V}^M$) the restriction of $h^M$ to vertices
of
$G$ (resp. to vertices of $\dual G$).

The next lemma describes possible height changes between pairs of vertices of
the primal (resp. dual) graph, incident in the primal (resp. dual) graph. To
simplify the picture, we consider primal and dual vertices to be around a
rhombus of the diamond graph; see Figure~\ref{fig:quadri5}.

\begin{figure}[ht]
\begin{center}
\includegraphics[width=14cm]{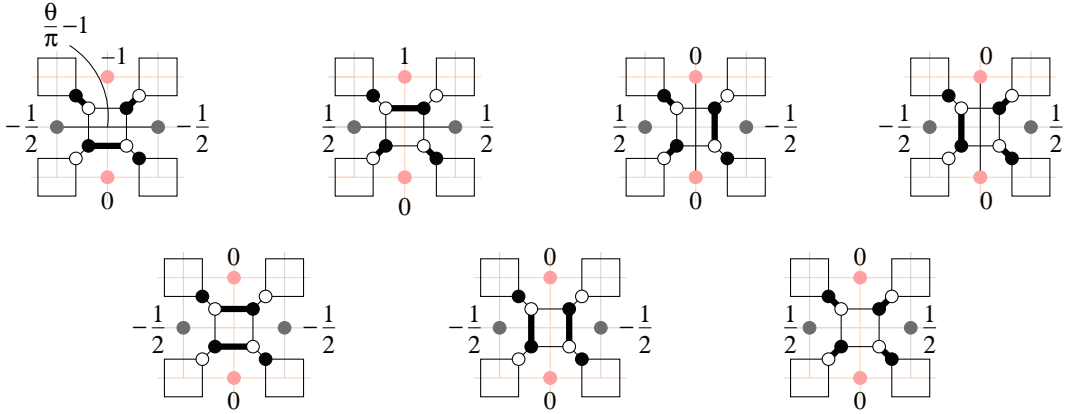} 
\caption{Height changes for the dimer model in a rhombus of the diamond
graph.}\label{fig:quadri5}
\end{center}
 
\end{figure}

\begin{lem}$\ $
\begin{itemize}
  \item The function $h^M_V$ (resp. $h^M_{\dual V}$) takes values in $\ZZ$ (resp.
    $\ZZ+\frac{1}{2}$).
  \item The increment of $h^M_V$ (resp. $h^{M}_{\dual V}$) between two
    neighboring vertices of $G$ (resp. of $\dual G$) is $-1$, $0$, or $1$.
  \item The increment of $h^M_V$ (resp. $h^M_{\dual V}$) is non-zero if and
only if the two vertices are separated
    by an edge of $\poly_2(M)$ (resp. $\poly_1(M)$).
\end{itemize}
\end{lem}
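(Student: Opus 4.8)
The plan is to work locally, one rhombus of the diamond graph $\GR$ at a time; the three assertions then follow from the case analysis of the seven local dimer configurations recorded in Figure~\ref{fig:quadri5}. Fix such a rhombus: its two pairs of opposite corners are the endpoints $v_1,v_2$ of a primal edge $e$ and the endpoints $w_1,w_2$ of the dual edge $\dual e$, and it contains in its centre the quadrangular decoration of the medial vertex $v(e)$, whose interior is a face, denoted $q$, of $\GQ$. With the isoradial embedding of Section~\ref{sec:52}, the four edges of this quadrangle split into two edges $f_1,f_2$ parallel to $\dual e$, of rhombus half-angle $\theta_{\dual e}=\frac{\pi}{2}-\theta_e$, where $f_i$ is the edge shared by $q$ and the face $v_i$, and two edges $g_1,g_2$ parallel to $e$, of rhombus half-angle $\theta_e$, where $g_i$ is the edge shared by $q$ and the face $w_i$. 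I will use two facts from the excerpt: by the proof of Lemma~\ref{lem:superposition}, $\dual e\in\poly_2(M)$ (resp. $e\in\poly_1(M)$) if and only if an odd number of the edges $f_1,f_2$ (resp. $g_1,g_2$) belong to $M$; and $\alpha_0$ assigns to each edge of $\GQ$ the value equal to its rhombus half-angle divided by $\pi$, so that $\alpha_0(f_1)=\alpha_0(f_2)=\theta_{\dual e}/\pi$ and $\alpha_0(g_1)=\alpha_0(g_2)=\theta_e/\pi$.

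First I would compute the increment $h^M_V(v_2)-h^M_V(v_1)$ along the sequence of faces $v_1,q,v_2$, crossing $f_1$ then $f_2$. By definition of $h^M$, this increment equals a sum of two terms of the form $\pm\bigl(\delta_{\{f_i\in M\}}-\alpha_0(f_i)\bigr)$; because the bipartite colours alternate around the quadrangle, the black-on-the-left convention makes the two signs opposite, which is precisely what Figure~\ref{fig:quadri5} displays. Since $\alpha_0(f_1)=\alpha_0(f_2)$, the $\alpha_0$-contributions cancel, and
\[
h^M_V(v_2)-h^M_V(v_1)=\pm\bigl(\delta_{\{f_1\in M\}}-\delta_{\{f_2\in M\}}\bigr)\in\{-1,0,1\}.
\]
This increment is non-zero exactly when precisely one of $f_1,f_2$ belongs to $M$, that is, when $\dual e\in\poly_2(M)$, that is, when $v_1$ and $v_2$ are separated by an edge of $\poly_2(M)$. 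This establishes the second and third bullets for $h^M_V$; since $\GQ$ is also the quadri-tiling graph of $\dual G$, exchanging the roles of $G$ and $\dual G$ (hence of the $f_i$ with the $g_i$, and of $\poly_2$ with $\poly_1$) gives the same two bullets for $h^M_{\dual V}$.

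For the first bullet, any sequence of faces joining two primal vertices along edges of $G$ is a concatenation of the elementary steps just analysed, so $h^M_V$ has integer increments; as $h^M$ is fixed to $0$ at a face corresponding to a vertex of $G$ and $G$ is connected, $h^M_V$ takes values in $\ZZ$. It then remains only to check that some face corresponding to a vertex of $\dual G$ has height in $\ZZ+\frac12$, since this, the integer increments between neighbouring dual vertices, and the connectedness of $\dual G$ give the claim for $h^M_{\dual V}$. Running the same computation along the faces $v_1,q,w_1$, crossing $f_1$ then $g_1$, the two $\alpha_0$-terms now add instead of cancelling, and their sum is $\theta_{\dual e}/\pi+\theta_e/\pi=\frac12$; hence
\[
h^M_{\dual V}(w_1)-h^M_V(v_1)=\delta_{\{f_1\in M\}}+\delta_{\{g_1\in M\}}-\frac12\in\ZZ+\frac12 .
\]

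I expect the only delicate point to be this orientation bookkeeping inside the quadrangle: one must verify, from the black-on-the-left convention and the alternation of colours around the quadrangle, that the $\alpha_0$-contributions of $f_1$ and $f_2$ (resp. of $g_1$ and $g_2$) cancel, while those of $f_1$ and $g_1$ add up to $\frac{\theta_e+\theta_{\dual e}}{\pi}=\frac12$. This, together with the seven local pictures, is exactly the content of Figure~\ref{fig:quadri5}; the rest is elementary.
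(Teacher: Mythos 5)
Your argument is correct and is essentially the paper's own proof: the same local computation inside each rhombus, with the reference-flow contributions of the two parallel interior edges cancelling, yields the increments in $\{-1,0,1\}$, the equivalence with an edge of $\poly_2(M)$ (resp. $\poly_1(M)$) via the parity criterion, and the integer/half-integer values; the only cosmetic difference is that the paper obtains the $\frac{1}{2}$ offset between a primal and a neighboring dual face by crossing a single flat external edge of $\GQ$ (reference flow $\frac{\pi/2}{\pi}=\frac12$) instead of your two interior edges with angles summing to $\frac{\pi}{2}$. One harmless slip: the interior edge parallel to $\dual e$ has $\GQ$-rhombus half-angle $\theta_e$ (weight $\sin\theta_e$) and the one parallel to $e$ has half-angle $\theta_{\dual e}$, i.e.\ the opposite of your labelling, but since you only use the equality of the reference flows on the two parallel edges and the sum $\frac{\theta_e+\theta_{\dual e}}{\pi}=\frac12$, no conclusion is affected.
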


\begin{proof}
  Let $e_1$, $e_2$ be two interior edges of $\GQ$, parallel to an
edge of $G$, as in Figure \ref{fig:quadri5}. Then, the reference flow $\alpha_0$
has the
same value but opposite direction on these two edges. As
  a consequence, using the definition of the height function,
$$
h^M(v_2^*)-h^M(v_1^*)=\II_{e_1}(M)-\PPquadri^{\infty}(e_1)
-\II_{e_2}(M)+\PPquadri^{\infty}(e_1)=\II_{e_1}(M)-\II_{e_2}(M).
$$
A similar expression holds for $h^M(v_2)-h^M(v_1)$.
This proves that the increment of $h^M$ between two neighboring vertices of
$G$ (resp. $G^*$) is equal to $-1$, $0$, or $1$. Because of our convention for
the base point,
this implies that $h^M$ takes integer values on $G$. To see that $h^M$ takes
half-integer values on $G^*$, one just has to notice that the reference flow
$\alpha_0$ separating two vertices $v$ (on $G$) and $\dual v$ (on
$\dual G$) which are neighbors on $\GR$ is
$\frac{\pi/2}{\pi}=\frac{1}{2}$ since the corresponding rhombus in the isoradial
graph $\GQ$ is flat.
\end{proof}

\begin{rem}
  Note that another choice of reference unit flow is the one coming
  from the reference dimer configuration $M_0$, where a white-to-black unit is
  flowing along all interior edges parallel to edges of $G^*$. This
  produces a random height function whose restriction to
  vertices of $G$, resp. of $\dual G$, coincides with $h_V$, resp. $h_{\dual V}$
  (up to an additive constant).
\end{rem}

\begin{rem}
  The height function $h^M$ on $V\cup \dual V$ can be defined directly from
  the 6-vertex dimer configuration, using the representation in terms of
  orientations depicted in Figure~\ref{fig:6vconfigs}. Since the number of
  incoming and outgoing edges is the same at each vertex, the set of edges in
  the 6-vertex configuration can be partitioned into oriented contours. These
  contours are the level lines of the restriction of $h^M$ to $V\cup \dual V$
  separating two successive half-integer values, and can thus be used to
  reconstruct $h^M$.
\end{rem}

The \emph{level lines} of $h_V$ (resp. $h_{\dual V}$) are the set of closed
contours on
$\dual G$ (on resp $G$) separating clusters of vertices of $G$ (resp. of $\dual
G$) where $h_V$ (resp. $h_{\dual V}$) takes the same value. 

Returning to the definition of the pair of polygon configurations $\poly(M)$
assigned to a quadri-tiling $M$, we immediately obtain the following:

\begin{lem}\label{lem:22}
Let $M$ be a dimer configuration of $\GQ$, then 
level lines of $h_V^M$, respectively $h_{V^*}^M$, exactly correspond to the
polygon configuration $\poly_1(M)$, respectively $\poly_2(M)$.
\end{lem}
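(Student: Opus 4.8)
The plan is to show directly that the mappings defining $\poly(M)$ and the construction of level lines of the height function from dimer configurations coincide. The key observation is that the previous lemma already tells us that increments of $h^M_V$ (resp. $h^M_{\dual V}$) between neighboring primal (resp. dual) vertices are non-zero exactly when these vertices are separated by an edge of $\poly_2(M)$ (resp. $\poly_1(M)$). So the statement is essentially a reformulation of that lemma, and the work consists in carefully unwinding what a ``level line'' is.

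First I would recall that a level line of $h^M_V$ is, by definition, a contour on $\dual G$ separating clusters of primal vertices on which $h^M_V$ is constant. An edge $\dual e$ of $\dual G$ belongs to such a contour if and only if the two primal vertices $u, v$ on either side of $\dual e$ have $h^M_V(u) \neq h^M_V(v)$; since the increment between neighbors is $0$ or $\pm 1$, this happens exactly when the increment is non-zero. By the previous lemma, this is precisely the condition that $\dual e$ crosses an edge $e$ of $\poly_2(M)$ — equivalently that $e \in \poly_2(M)$, using the bijection between primal edges and dual edges. Hence the set of dual edges carrying level lines of $h^M_V$ is exactly $\poly_2(M)$; wait — here one must be slightly careful about which of $\poly_1$, $\poly_2$ lives on $G$ and which on $\dual G$. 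The pair $\poly(M) = (\poly_1(M), \poly_2(M))$ from Section~\ref{sec:52} has $\poly_1(M) \in \P^0(G)$ a polygon configuration of the primal graph and $\poly_2(M) \in \P^0(\dual G)$ of the dual graph. The previous lemma states the increment of $h^M_V$ (a function on primal vertices) is non-zero iff the two vertices are separated by an edge of $\poly_2(M)$, which is a dual edge — consistent, since level lines of $h^M_V$ are contours on $\dual G$. Symmetrically, level lines of $h^M_{\dual V}$ are contours on $G$ whose edges are precisely those of $\poly_1(M)$.

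So the argument reduces to: (i) identify the edge set of the level lines of $h^M_V$ with $\{\dual e : h^M_V \text{ increments across } \dual e\}$; (ii) invoke the previous lemma to equate this with $\poly_2(M)$; (iii) check that this edge set really is a union of closed contours (it is, because $\poly_2(M)$ is a polygon configuration of $\dual G$, hence an even subgraph, and more to the point the set of edges across which a $\ZZ$-valued function on the faces changes value automatically forms closed contours — the boundary of the superlevel sets); (iv) repeat verbatim with the roles of $G$ and $\dual G$, and of $h^M_V$ and $h^M_{\dual V}$, exchanged. Steps (i)–(iv) are all essentially bookkeeping once the previous lemma is in hand.

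The only mild subtlety — and the place I would be most careful — is the consistency of the geometric conventions: the orientation convention for $e$ in the definition of $h^M$ (black vertex on the left when crossing $e$ from $f$ to $f'$), the identification of faces of $\GQ$ with primal vertices, dual vertices, and edges of $G$, and the placement of the interior edges $e_1, e_2$ parallel to a primal edge in the rhombus picture of Figure~\ref{fig:quadri5}. These were already pinned down in the proof of the previous lemma, where the height increment across a rhombus was computed as $\II_{e_1}(M) - \II_{e_2}(M)$; so I would simply cite that computation rather than redo it, and the present lemma follows with essentially no further calculation.
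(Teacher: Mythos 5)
Your unwinding is, in substance, exactly the argument the paper intends: Lemma~\ref{lem:22} is presented there as an immediate consequence of the definition of $\poly(M)$ and of the preceding lemma (whose third bullet is backed by the computation $h^M(v_2^*)-h^M(v_1^*)=\II_{e_1}(M)-\II_{e_2}(M)$), and your steps (i)--(iv) are precisely that bookkeeping. But look at what you actually establish: level lines of $h_V^M$ (contours on $\dual G$) coincide with $\poly_2(M)$, and level lines of $h_{\dual V}^M$ (contours on $G$) coincide with $\poly_1(M)$. This is the \emph{transpose} of the statement as printed, which pairs $h_V^M$ with $\poly_1(M)$ and $h_{V^*}^M$ with $\poly_2(M)$. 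You sensed the tension (``one must be slightly careful about which of $\poly_1,\poly_2$ lives on $G$'') but then quietly concluded with the swapped pairing, presenting it as a proof of the lemma without flagging that it contradicts the wording you were asked to prove.

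To be fair, the source of the mismatch is an inconsistency in the paper itself. The definition of level lines (those of $h_V$ live on $\dual G$), the third bullet of the preceding lemma, the remark following Lemma~\ref{lem:22} (``on contour lines of $h_V^M$, $h_{\dual V}^M$ is constant''), and Theorem~\ref{thm:intro-xor-level} (level lines of the restriction of $h$ to vertices of the \emph{dual} graph) all support your pairing $h_V^M\leftrightarrow\poly_2(M)$, $h_{\dual V}^M\leftrightarrow\poly_1(M)$; the literal indices in Lemma~\ref{lem:22}, and the words ``restriction to primal vertices'' in Theorem~\ref{thm:xor-level}, point the other way. Since the XOR loops live on $G$ and have the law of $\poly_1$ (Theorem~\ref{thm:maininfinite}), the coherent reading is yours: XOR loops are the level lines of $h_{\dual V}^M$, in agreement with Theorem~\ref{thm:intro-xor-level}. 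So your mathematics is correct and follows the paper's (one-line) route; what is missing is the explicit observation that the lemma as stated must have its two indices exchanged (or, equivalently, a justification of the printed pairing, which your own argument shows cannot hold with the paper's conventions).
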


Note that due to the fact that $\poly_1(M)$ and $\poly_2(M)$ do not cross, the
increments of $h^M$ along two diagonals of a rhombus cannot be both non-zero. As
a consequence, on contour lines of $h_V^M$, $h_{\dual V}^M$ is constant.

Combining Lemma \ref{lem:22} with Theorem \ref{thm:maininfinite} stating that
monochromatic polygon configurations of the XOR
Ising model have the same distribution as primal polygon configurations
of dimer configurations of $\GQ$, we obtain one of the main theorems of this
paper:

\begin{thm}\label{thm:xor-level}
Monochromatic polygon configurations of the critical XOR-Ising model have the
same distribution as level lines of the restriction to primal vertices of the
height function of
dimer configurations of $\GQ$. 
\end{thm}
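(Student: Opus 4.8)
The plan is to assemble Theorem~\ref{thm:xor-level} directly from two facts that are already in hand: the identity in law between XOR-polygon configurations and primal contours of the quadri-tiling dimer model (Theorem~\ref{thm:maininfinite}), and the purely deterministic identification of primal contours with level lines of the height function restricted to primal vertices (Lemma~\ref{lem:22}). So the proof is short, and the work consists in making sure the two statements are about the same objects and can be chained.

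First I would recall that Lemma~\ref{lem:22} holds configuration by configuration: for every dimer configuration $M$ of $\GQ$, the level lines of $h_V^M$ are \emph{exactly} the polygon configuration $\poly_1(M)$. Since this is an equality of objects, not merely of distributions, it transports the law of $\poly_1$ under $\PPquadri^\infty$ verbatim to the law of the level lines of $h_V$: the pushforward of $\PPquadri^\infty$ by $M\mapsto \poly_1(M)$ coincides with the pushforward by $M\mapsto(\text{level lines of }h_V^M)$. In particular, for any finite set of edges $\E$, the event $\{\E\subset\poly_1\}$ is the same event as $\{\E\text{ lies on a level line of }h_V\}$, so the two have equal $\PPquadri^\infty$-probability. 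Second, Theorem~\ref{thm:maininfinite} gives, for every finite edge set $\E$, the equality $\PPdising^\infty[\E\subset\XOR]=\PPquadri^\infty[\E\subset\poly_1]$. Chaining the two equalities yields $\PPdising^\infty[\E\subset\XOR]=\PPquadri^\infty[\E\text{ on a level line of }h_V]$ for all finite $\E$, which is the asserted identity of laws between monochromatic XOR-polygon configurations and level lines of the restriction of the height function to primal vertices.

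The only genuinely delicate point to check is that the event descriptions match up on the nose: one should verify that a monochromatic XOR configuration (respectively a primal contour $\poly_1(M)$, respectively a collection of level lines of $h_V$) is in each case a polygon configuration of $\P^0(G)$ viewed as a subset of edges of $G$, so that ``equality in law'' is equality of the laws of random elements of $2^{E(G)}$ determined by their finite-dimensional marginals $\{\E\subset\cdot\}$. Lemma~\ref{lem:mono} already records that $\XOR$ is a polygon configuration of $\P^0(G)$; Lemma~\ref{lem:22} and the remark that $h_{\dual V}$ is constant on contour lines of $h_V$ ensure the level-line description is consistent and that no spurious crossings appear; and the families of cylinder events $\{\E\subset\cdot\}$ over finite $\E$ generate the relevant $\sigma$-algebra, so equality of these marginals is equality in law. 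I do not expect any real obstacle here — the heavy lifting (the locality arguments passing to the infinite isoradial graph, and the combinatorial mappings behind $\poly$) has been done in Theorem~\ref{thm:maininfinite} and in Section~\ref{sec:52} — so this last theorem is essentially a bookkeeping corollary, and the proof should consist of exactly the two-line chaining above together with the remark identifying the event descriptions.
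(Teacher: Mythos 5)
Your proposal is correct and follows exactly the paper's own route: the theorem is obtained by chaining the deterministic identification of Lemma~\ref{lem:22} (level lines of $h_V^M$ are precisely $\poly_1(M)$, configuration by configuration) with the equality in law of Theorem~\ref{thm:maininfinite}. Your additional bookkeeping about cylinder events and the common state space $\P^0(G)$ is a harmless elaboration of what the paper leaves implicit.
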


\subsection{Wilson's conjecture}
\label{sec:wilsonconj}
In \cite{WilsonXOR}, Wilson presented extensive numerical simulations on loops
of the critical XOR Ising model on the honeycomb lattice, on the base of which
he conjectured the following:

\begin{conj}[Wilson \cite{WilsonXOR}]
  The scaling limit of the family of loops of the critical XOR Ising model are
  the level lines of the Gaussian free field corresponding to levels that are
  odd multiples of $\frac{\sqrt{\pi}}{2}$.
\end{conj}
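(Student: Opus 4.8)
The plan is to combine the exact identity of Theorem~\ref{thm:xor-level} with the known scaling limit of the quadri-tiling height function. By Theorems~\ref{thm:maininfinite} and~\ref{thm:xor-level}, for a critical isoradial graph the loops of the critical XOR-Ising model on $G^*$ have exactly the law of the level lines of $h_V$, the restriction to primal vertices of the height function $h$ of the critical dimer model on $\GQ$. Since $h_V$ is integer valued, its level lines are naturally indexed by the half-integers $\ZZ+\frac12$. On the other hand, by \cite{Bea2}, $h$ --- viewed as a random distribution on the plane after sending the mesh to zero --- converges weakly to $\frac{1}{\sqrt{\pi}}$ times the Gaussian free field $\Phi$. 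Writing $\Phi=\sqrt{\pi}\,h$ in the limit, a level line of $h_V$ sitting at height $n+\frac12$ becomes a level line of $\Phi$ at height $\sqrt{\pi}\,\bigl(n+\frac12\bigr)=(2n+1)\frac{\sqrt{\pi}}{2}$, i.e., precisely at the odd multiples of $\frac{\sqrt{\pi}}{2}$ appearing in Wilson's conjecture. This already pins down, at the heuristic level, both the limiting object (the contour lines of the GFF, equivalently a $\mathrm{CLE}_4$) and the special value of the spacing that~\cite{WilsonXOR} observed numerically.

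\textbf{What remains.} Turning this heuristic into a theorem requires upgrading the weak convergence of the field $h$ to convergence of its family of level lines as a random ensemble of loops, in a suitable topology on loop configurations (e.g.\ the Schramm--Smirnov topology, or a Hausdorff-type metric on collections of loops). Concretely one would (i) prove tightness of the laws of the rescaled XOR-loop ensembles, which amounts to establishing uniform Russo--Seymour--Welsh-type crossing estimates for the level lines of $h_V$ --- these could be approached either through the Kasteleyn-matrix asymptotics on isoradial graphs underlying \cite{Bea2}, or through monotonicity (FKG) inputs coming from the double-Ising side; and (ii) identify every subsequential limit with the canonical coupling of a $\mathrm{CLE}_4$ with the GFF as its set of level lines at the heights computed above, using a characterization of $\mathrm{CLE}_4$ / of the GFF level-line ensemble (Schramm--Sheffield, Miller--Sheffield, Aru--Lupu--Sepúlveda), together with the conformal covariance that any subsequential limit inherits from the conformal invariance of the continuous GFF and from the locality of the isoradial Ising and dimer measures used in Theorem~\ref{thm:maininfinite}.

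\textbf{The main obstacle.} Step (i) --- tightness and the crossing estimates for the loop ensemble of a bipartite dimer height function --- is exactly the technical point that is still open in the program proving that double-dimer loops converge to $\mathrm{CLE}_4$, and step (ii) is its parallel identification step. We do not expect to resolve these here; rather, Theorem~\ref{thm:intro-xor-level} should be read as \emph{reducing} Wilson's conjecture to that program. The contribution is the explicit dimer representation of the XOR loops together with the exact normalization $\frac{1}{\sqrt{\pi}}$, so that any future proof of the double-dimer/$\mathrm{CLE}_4$ convergence, or a direct adaptation of its methods to the graph $\GQ$, would immediately upgrade Theorem~\ref{thm:xor-level} to a full proof of the conjecture.
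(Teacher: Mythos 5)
Your proposal takes essentially the same route as the paper: the paper itself does not prove the conjecture but, in Section~\ref{sec:wilsonconj}, combines Theorem~\ref{thm:xor-level} with the convergence of the height function to $\frac{1}{\sqrt{\pi}}$ times the Gaussian free field from \cite{Bea2} to identify the limiting levels $(k+\frac{1}{2})\sqrt{\pi}$, i.e., odd multiples of $\frac{\sqrt{\pi}}{2}$, and it points to exactly the same missing step you name (upgrading weak convergence of the field to convergence of the contour-line ensemble, the same obstacle as in the double-dimer/$\mathrm{CLE}_4$ program). Your additional remarks on tightness and on identifying subsequential limits via characterizations of $\mathrm{CLE}_4$ are reasonable elaborations of that same open step rather than a different approach.
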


The Gaussian free field is a wild object: it is a random \emph{generalized}
function, and not a function, and as such, there is no direct way to define what
its level lines are. The level lines of the Gaussian free field are understood
here as the scaling limit when the mesh goes to zero of the level lines of the
discrete Gaussian free field on a triangulation of the domain, which separate
domains where the field is above or below a certain
level~\cite{SchrammSheffield}.

The level lines of the Gaussian free field corresponding to levels that are odd
multiples of  $\lambda=\sqrt{\frac{\pi}{8}}$ form a
$\mathrm{CLE}_4$ \cite{SchrammSheffield,MillerSheffield}. The contour lines of the XOR Ising models are thus conjectured
to have the same limiting behavior as the $\mathrm{CLE}_4$, except that there
are a factor of $\sqrt{2}$ times fewer loops in the XOR Ising picture. This conjecture is in
agreement with predictions of conformal field
theory~\cite{IkhlefRajabpour,Picco}.

Theorem~\ref{thm:xor-level} can be interpreted as a proof of a version of Wilson's
conjecture in a discrete setting, before passing to the scaling limit, and
brings some elements for the complete proof of
this conjecture. In particular, it explains the link with the Gaussian free
field and the factor $\sqrt{2}$, as we will now show.

For $\varepsilon>0$, denote by $\GQ_\varepsilon$ the
embedding of $\GQ$ in the plane where rhombi of $\GQ$ have side length
$\varepsilon$. For every dual vertex $v$ in $\dual \GQ$, define
$v^\varepsilon$ the vertex in ${\GQ_\varepsilon}^*$ corresponding to the dual
vertex $v$.

The random height function $h$ can be interpreted on $\GQ_\varepsilon$ as a
\emph{random distribution} \cite{GelfandVil}, \emph{i.e.}, a continuous random linear form on the set
$\mathcal{C}^\infty_{0,c}(\mathbb{R}^2)$ of compactly
supported smooth, zero mean functions, denoted by $H^\varepsilon$:
for every $\varphi\in\mathcal{C}^\infty_{0,c}(\RR^2)$, 
\begin{equation*}
  H^\varepsilon(\varphi)= \sum_{v\in\dual \GQ} \mathrm{area}(v^\varepsilon) h(v)
  \varphi(v^\varepsilon),
\end{equation*}
where $\mathrm{area}(v^\varepsilon)=\varepsilon^2 \mathrm{area}(v)$ is the area
of the face of $\GQ_\varepsilon$ associated to $v^\varepsilon$.

In \cite{Bea2}, the second author proved the following convergence result
for the height function of the dimer model on $\GQ$:

\begin{thm}[\cite{Bea2}]
  As $\varepsilon$ goes to 0, the height function on the critical
  quadri-tilings, as a random distribution, converges in law to
  $\frac{1}{\sqrt{\pi}}$ times the Gaussian free field.
\end{thm}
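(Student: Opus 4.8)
The plan is to prove this by the method of moments, exploiting the determinantal nature of the bipartite dimer model and the explicit asymptotics of the inverse Kasteleyn operator on critical isoradial graphs due to Kenyon~\cite{Kenyon3}. Since a centered generalized field is the (rescaled) Gaussian free field as soon as it has the prescribed covariance and all its moments are Gaussian (Wick) moments, it suffices to establish three things: for every fixed pair $\varphi,\psi\in\mathcal{C}^\infty_{0,c}(\RR^2)$, the covariance $\mathbb{E}[H^\varepsilon(\varphi)H^\varepsilon(\psi)]$ converges as $\varepsilon\to0$ to $\tfrac1\pi$ times the bilinear pairing $\iint\varphi(x)G(x,y)\psi(y)\,dx\,dy$ against the continuum logarithmic Green's function $G$; all higher moments of $H^\varepsilon$ converge to the corresponding Gaussian moments; and the family $(H^\varepsilon)$ is tight as a family of random distributions.

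The first step is to reduce everything to height \emph{increments}. Fixing a base face $f_0$, the definition of the height function writes $h^M(v^\varepsilon)$ as a sum along a dual path from $f_0$ to $v$ of the centered edge occupations $\alpha_M(e)-\alpha_0(e)=\mathbf{1}_{\{e\in M\}}-\PPquadri^{\infty}(e)$; hence, for fixed $\varepsilon$, $H^\varepsilon(\varphi)$ is a finite weighted sum of such centered indicators. By Kenyon's formula, the joint occupation probability of edges $e_i=w_ib_i$ in a bipartite dimer model equals $\bigl(\prod_i w_{e_i}\bigr)\det\bigl(K^{-1}(b_i,w_j)\bigr)_{i,j}$, so every joint cumulant of the centered indicators is an explicit alternating sum of products of entries of $K^{-1}$. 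This is where the infinite-volume constructions of Section~\ref{sec:dIsing-iso} enter: $K^{-1}$ for the critical dimer model on $\GQ$ (periodic or not) is Kenyon's local coupling function, with the asymptotics $K^{-1}(b,w)=\tfrac{1}{2\pi}\bigl(\tfrac{c_w}{z_b-z_w}+\tfrac{\bar c_w}{\bar z_b-\bar z_w}\bigr)+O(|z_b-z_w|^{-2})$, where $z$ denotes position in the plane and $c_w$ is a unit-modulus phase determined by the local geometry.

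Next I would analyze the two-point term. Writing the covariance of two increments as a double sum over the edges crossed by the two dual paths, each summand is, up to a sign, $K^{-1}(b,w')K^{-1}(b',w)$; the leading $\tfrac1{z-z'}$ behaviour turns the double discrete sum into a Riemann-sum approximation of a double contour integral, whose value a residue computation identifies with (minus) the logarithmic Green's function, the geometric phases $c_w$ conspiring so that the limiting kernel is rotation invariant and equal to $-\tfrac1{2\pi^2}\log|x-y|$. Pairing against the $\varepsilon^2$-weighted $\varphi$ and $\psi$ and letting $\varepsilon\to0$ yields $\tfrac1\pi\iint\varphi\,G\,\psi$, which is exactly the covariance of $\tfrac1{\sqrt\pi}$ times the GFF, and this is what fixes the normalizing constant. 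For the higher moments I would show that every cumulant of order $k\geq3$ is negligible: it is a cyclic sum of $k$ factors $K^{-1}\sim(\text{distance})^{-1}$, and once the $\varepsilon^2$ area weights and the number of lattice terms are accounted for, the associated Riemann sum carries a strictly positive net power of $\varepsilon$ and hence vanishes; odd moments vanish by parity of the centered indicators. Only pairings survive, which is Wick's theorem, so the limit is Gaussian. Tightness follows from the uniform bound on $\mathbb{E}[H^\varepsilon(\varphi)^2]$ and the continuity of $\varphi\mapsto H^\varepsilon(\varphi)$ in a negative Sobolev norm inherited from the log-type covariance, after which convergence of the random distribution reduces to convergence of $(H^\varepsilon(\varphi))$ along a countable dense family of test functions.

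The main obstacle is the uniform control of error terms in the discrete-to-continuum passage: one must show that the subleading part of $K^{-1}$ and the discretization errors in approximating the contour integrals are genuinely negligible uniformly over the set of lattice edges entering the sums, which grows as $\varepsilon\to0$, and that the cancellation producing a rotation-invariant limiting kernel is exact at leading order — it is precisely here that the constant $\tfrac1{\sqrt\pi}$, hence the $\sqrt2$ spacing in Wilson's conjecture, is pinned down. For non-periodic isoradial graphs this uniformity is supplied by Kenyon's asymptotic expansion of the coupling function together with the embedding of any finite piece of a rhombus graph into a periodic one, exactly as in the proof of Theorem~\ref{thm:maininfinite}.
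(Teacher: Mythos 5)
A point of context first: the paper itself gives no proof of this statement --- it is imported wholesale from \cite{Bea2}, where the convergence is proved for critical bipartite dimer models on isoradial graphs by Kenyon's moment method. Your overall architecture (write $H^\varepsilon(\varphi)$ as an $\varepsilon^2$-weighted sum of height values, reduce heights to sums of centered edge indicators along dual paths, express joint moments through minors of the inverse Kasteleyn operator, invoke Kenyon's asymptotics of the coupling function $K^{-1}$ from \cite{Kenyon3}, turn path sums into contour integrals to identify the limiting covariance with $\tfrac1\pi$ times the Green's function, and finish with tightness) is indeed the route taken in the cited proof, and your covariance step, including the role of the phases in producing a rotation-invariant logarithmic kernel, is essentially right.

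There is, however, a genuine gap in your treatment of the higher moments. You dismiss all cumulants of order $k\ge 3$ on the grounds that ``the associated Riemann sum carries a strictly positive net power of $\varepsilon$''. That power counting fails: each of the $k$ height increments is a sum of order $\varepsilon^{-1}$ edge terms along a macroscopic path, each factor in the cyclic product of $K^{-1}$ entries is of order $\varepsilon/\text{distance}$, and the resulting $k$-fold path sum is a Riemann approximation of a $k$-fold line integral of a cyclic product of $1/\text{distance}$ kernels over fixed macroscopic curves --- an $O(1)$ quantity, not $o(1)$. The vanishing of the non-Gaussian contributions in \cite{Bea2} (as in Kenyon's square-lattice and honeycomb proofs) does not come from scaling but from the finer complex structure of the asymptotics: $K^{-1}$ splits into a ``holomorphic'' and an ``antiholomorphic'' part whose unit-modulus phases oscillate at the lattice scale along the paths, so cross terms cancel in the path sums, and the surviving terms are multiple contour integrals of products of $(z_i-z_{i+1})^{-1}$ which cancel by antisymmetry/residue arguments except for the Wick pairings. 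Likewise, ``odd moments vanish by parity of the centered indicators'' is unjustified --- a centered Bernoulli variable is not symmetric, and the odd moments vanish only through this same cancellation mechanism. Since these cancellations are exactly where the analytic work of \cite{Bea2} lies (and where the constant $\tfrac{1}{\sqrt{\pi}}$ is pinned down for all moments, not just the second), your sketch as written does not yet amount to a proof; it needs the precise form of the subleading and oscillatory terms of the coupling function and the symmetry argument eliminating non-pairing contributions.
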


The result also holds for the restriction of $h$ to $G$ (resp. to $\dual G$) as
soon as $\mathrm{area}(v)$ is replaced by the area of the corresponding face of
$G$ (resp. $\dual G$).

As contour lines of the restriction of $h$ to $G$ separate integer values,
they can be understood as discrete level lines corresponding to half-integer
values. Therefore, it is natural to expect that these contour lines converge to
the contour lines of the limiting object, \emph{i.e.}, to level lines for the
Gaussian free
field with levels $(k+\frac{1}{2})\sqrt{\pi}$, $k\in\ZZ$, which would prove
Wilson's conjecture. Unfortunately, the result for the convergence result of the
height function to the Gaussian free field is too weak to ensure convergence of
contour lines.

The convergence result in the paper \cite{Bea2} applies not only to critical
quadri-tilings, but to all
bipartite planar dimer models on isoradial graphs with critical weights. It is
conjectured that the family of loops obtained by superimposing two independent
critical dimer configurations converges to CLE$_4$. This is supported by the
fact that each of the dimer configurations can be described by a height
function, converging in the scaling limit to $1/\sqrt{\pi}$ times the Gaussian
Free Field, the two fields being independent. Dimer loops are the half integer
level lines of the difference, which by independence converges (in a weak sense) to
$\sqrt{2/\pi}$ times the Gaussian free field, and it is known that level
lines $(k+1/2)\sqrt{\frac{\pi}{2}}$ of the Gaussian free field are a CLE$_4$.

Therefore, the factor $\sqrt{2}$ in Wilson's conjecture corresponds to the fact
that contours in the XOR Ising model have to do with contour lines of
only one dimer height function, as opposed to two for dimer loops.

\appendix

\section{Some elements of homology theory on surfaces}\label{app:A}

Here are some general facts about homology theory on surfaces which are
useful in the context of this paper. More details can be found in the references
\cite{Fulton,Maunder,Massey}.
We consider $\Sigma$ to be a compact,
orientable surface of genus $g$ with boundary $\partial\Sigma$ consisting of $p$
components. The boundary may be empty, in which case $p=0$.

We are interested in the \emph{first} homology group $H_1$, and
in the case where the target abelian group is $\ZZ/2\ZZ$. The
other non-trivial homology groups $H_0$ and $H_2$ are isomorphic to $\ZZ/2\ZZ$,
when $\Sigma$ is connected.

\subsection{$1$-chains and first homology group}\label{app:A1}

A \emph{$1$-chain} is a formal linear combination of
1-dimensional submanifolds of $\Sigma$.
The coefficients here will be taken to be in $\ZZ/2\ZZ$.
The space of $1$-chains with coefficients in $\ZZ/2\ZZ$ is a
$\ZZ/2\ZZ$-vector space.
Note that since the target group is $\ZZ/2\ZZ$, we do not need
to care about orientations of 1-chains, and the sum is the same as the
difference.
If two $1$-chains are the sums of pairwise disjoint submanifolds
\begin{equation*}
  \gamma = \sum_{c\in A}c, \quad
  \gamma' = \sum_{c\in A'}c,
\end{equation*}
with $c\neq c' \Rightarrow c\cap c'=\emptyset$, then
\begin{equation*}
  \gamma+\gamma'=\sum_{c\in A\cup A'}c,
\end{equation*}
so that we can think morally of the addition as the union for disjoint 1-chains.

The boundary of a 1-chain is the formal linear combination of the end points
of the 1-dimensional submanifolds it consists of.
A 1-chain is a
\emph{cycle} if its boundary is empty.
An equivalence relation on the space of cycles is defined as follows:
two cycles $\gamma$ and $\gamma'$ are equivalent if their sum is the boundary
of a 2-dimensional submanifold of
$\Sigma$.
Note that a connected component of the boundary of a 2-dimensional submanifold
of $\Sigma$ may be not itself the whole boundary of a submanifold, and as such
may not be equivalent to the empty chain for the relation above.
The \emph{first homology
group} $\homol{1}$ is the set of equivalence classes for this relation.
It has a structure of $\ZZ/2\ZZ$-vector space inherited from the one of
the space of $1$-chains.

\subsection{Relative homology}\label{app:A2}

If $A$ is a closed subset of $\Sigma$, then one can also consider
\emph{homology relative to $A$}. As above, one defines a notion
of cycle and boundary, relative to $A$ this time: a \emph{relative cycle} is a
$1$-chain whose
boundary is in $A$. A \emph{relative boundary} is a 1-chain in $\Sigma$ for
which there exists a $1$-chain in $A$, such that the sum of the two is a
boundary in $\Sigma$. Associated to this concept is an equivalence relation
on relative cycles: two relative cycles are equivalent if their sum is a relative
boundary.
Then, the \emph{first homology group relative to $A$},
denoted by $H_1(\Sigma, A; \ZZ/2\ZZ)$, is the $\ZZ/2\ZZ$-vector space generated by
relative cycles, quotiented by this equivalence relation.

A particular example of interest is when $A=\partial\Sigma$. Representatives of
equivalence classes of $\relhomol{1}$ are finite unions of cycles and paths
attached to components of the boundary.

Note that when $\partial\Sigma$ is empty, the homology of $\Sigma$
relative to its boundary coincides with the usual homology.

\subsection{Explicit bases of homology}\label{app:A3}

The two homology groups $\homol{1}$ and $\relhomol{1}$ turn out to have the same
dimension
\begin{equation*}
  N=
  \begin{cases}
    2g& \text{if $p=0$ or 1,}\\
    2g+p-1& \text{otherwise.}\\
  \end{cases}
\end{equation*}
For each of the groups, a basis can be explicitly given.
Label the handles of $\Sigma$ from 1 to $g$, and the
$p$ components of the boundary from $C_0$ to $C_{p-1}$. For
$\homol{1}$, choose $N$ cycles
$({\rep\lambda}_i)_{i=1}^N$ on $\Sigma$, as follows:
\begin{itemize}
  \item for $i\in\{1,\dots, g\}$, take ${\rep\lambda}_{2i-1}$ and
    ${\rep\lambda}_{2i}$ to be winding around the $i$-th handle in two
transverse directions,
  \item for $i\in\{1,\dots, N-2g\}$, take ${\rep\lambda}_{2g+i}$ to be winding around
    $C_i$, without crossing ${\rep\lambda}_1,\dots,{\rep\lambda}_{2g}$.
\end{itemize}
Denote by $\lambda_i$ the homology class of ${\rep\lambda}_i$.
Then, the collection $(\lambda_i)_{i=1}^N$ is a basis of $\homol{1}$:
any $1$-chain on $\Sigma$ has the same homology class as a sum of
$\lambda_i$'s. The first homology group $\homol{1}$ is isomorphic 
to $(\ZZ/2\ZZ)^{N}$: for every
$i\in\{1,\ldots,N\}$, the basis element $\lambda_i$ is mapped to the basis
element of $(\ZZ/2\ZZ)^{N}$ consisting of $0$'s and a 1 at position $i$.

For $\relhomol{1}$, choose $N$ cycles $(\rep\gamma_i)_{i=1}^N$ on $\Sigma$, as
follows:
\begin{itemize}
  \item for $i\in\{1,\dots,g\}$, take ${\rep\gamma}_{2i-1}={\rep\lambda}_{2i}$, and
    ${\rep\gamma}_{2i}={\rep\lambda}_{2i-1}$,
  \item for $i\in\{1,\dots,N-2g\}$, take ${\rep \gamma}_{2g+i}$ to be a path from
    $C_0$ to $C_i$.
\end{itemize}
Denote by $\gamma_i$ the relative homology class of ${\rep\gamma}_i$. Then, the
collection $(\gamma_i)_{i=1}^N$ is a basis for $\relhomol{1}$. The group
$\relhomol{1}$ is also
isomorphic to $(\ZZ/2\ZZ)^{N}$.
The two bases $(\lambda_i)_{i=1}^N$ and $(\gamma_i)_{i=1}^N$ are dual to each
other as explained in Appendix \ref{app:intersection}.

\subsection{Representatives of homology classes on graphs}\label{app:graphs}

Consider a cellular decomposition of the surface $\Sigma$ by a graph $G_{\Sigma}$, as in
Section~\ref{sec1}. The embedding of $G_{\Sigma}$ on $\Sigma$ defines a notion
of dual graph for $G_{\Sigma}$, denoted by $\dual G_{\Sigma}$. Then
representatives of any homology class
of $\homol{1}$ (resp. any relative homology class of $\relhomol{1}$) can be
realized as combinatorial paths on $\dual G$ (resp. $G$). Figure
\ref{fig:homol_basis} provides an example of representatives of the bases
$(\lambda_i)_{i=1}^N$ and $(\gamma_i)_{i=1}^N$ defined in Section \ref{app:A3}.

\begin{figure}[ht]
\begin{center}
\includegraphics[width=10cm]{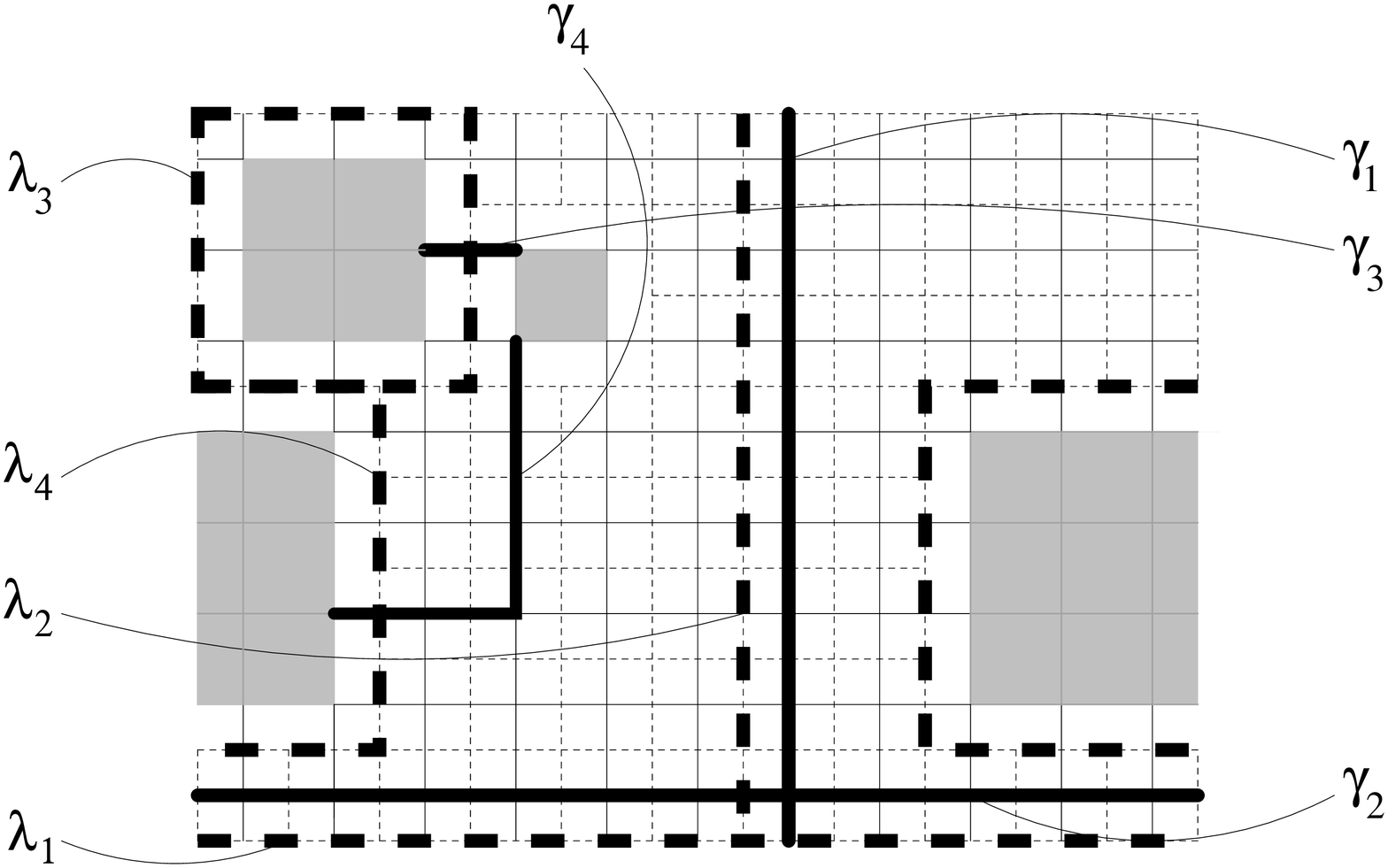}
\caption{Representatives of a basis $(\lambda_i)_{i=1}^4$ of $\homol{1}$
(dotted lines), and of a basis $(\gamma_i)_{i=1}^4$ of $\relhomol{1}$ (plain
lines).}
\label{fig:homol_basis}
\end{center} 
\end{figure}

\subsection{Intersection form}\label{app:intersection}

There is a natural pairing between $\homol{1}$ and $\relhomol{1}$, called the
\emph{intersection form}:
\begin{equation*}
  (\cdot |\cdot): \homol{1}\times\relhomol{1} \longrightarrow \ZZ/2\ZZ,
\end{equation*}
defined as follows. Let $\tau\in \homol{1}$ and $\eps\in \relhomol{1}$ be two
homology classes. Take representatives $\rep\tau$ and $\rep\eps$ for
two classes $\tau$ and $\eps$ respectively. Then $(\tau|\eps)$ is defined as the
parity of the number of intersections of
${\rep\tau}$ and ${\rep\eps}$. This definition does not depend on the choice of
representatives.

In the explicit bases of $\homol{1}$ and $\relhomol{1}$ chosen in Appendix~\ref{app:A3},
the matrix of the intersection form is the identity. The pairing
is thus non-degenerate and defines an isomorphism between $\homol{1}$ and
$\relhomol{1}$. This is an explicit realization of the Poincar\'e--Lefschetz
duality; see Theorem~5.4.13 and Corollary~5.2.12 in \cite{Maunder}.

\subsection{Inclusion, excision and morphisms for homology}\label{app:A6}

Suppose that there exists a larger surface $\tilde{\Sigma}$ containing $\Sigma$.
Then a $1$-chain in
$\Sigma$ is in particular a chain in $\tilde{\Sigma}$, and a boundary in
$\Sigma$
is in particular a boundary in $\tilde{\Sigma}$. This implies that the inclusion
$\Sigma\subset \tilde{\Sigma}$ induces a morphism
\begin{equation*}
  \pi_{\tilde\Sigma,\Sigma}: \homol{1}\longrightarrow \homol[\tilde{\Sigma}]{1}.
\end{equation*}

The inclusion also induces morphisms for relative homology groups: if the
subset $A\subset\Sigma$  is included in a subset $B\subset\tilde{\Sigma}$, then
any relative chain (resp. cycle) in $\Sigma$ relative to $A$ is in particular a
relative chain (resp. cycle) in $\tilde{\Sigma}$ relative to $B$ (just by
forgetting what is in $B\setminus A$). Therefore, this induces a morphism
\begin{equation*}
  H_1(\Sigma, A; \ZZ/2\ZZ) \longrightarrow H_1(\tilde{\Sigma},B; \ZZ/2\ZZ),
\end{equation*}
giving the homology class in $\tilde{\Sigma}$ relative to $B$ of
the restriction to $\tilde{\Sigma}\setminus B$ of any representative of an
element of $H_1(\Sigma, A; \ZZ/2\ZZ)$. In the special case when
$\Sigma=\tilde{\Sigma}$ and $A$ is empty, we get the application
$\iota_{\tilde\Sigma,B}$:
\begin{equation*}
  \iota_{\tilde\Sigma,B}=H_1(\tilde\Sigma; \ZZ/2\ZZ) \longrightarrow
  H_1(\tilde{\Sigma},B; \ZZ/2\ZZ).
\end{equation*}

Moreover, the \emph{excision theorem} (\cite{Maunder}, Theorem~8.2.1) states that if we cut out an open set $U$ from
both $\tilde{\Sigma}$ and $B$, the relative homology groups
$H_1(\tilde{\Sigma}, B; \ZZ/2\ZZ)$ and $H_1(\tilde{\Sigma}\setminus U,
B\setminus U; \ZZ/2\ZZ)$ are isomorphic.
In particular, when $U= \Sigma^{c}=\tilde{\Sigma}\setminus\Sigma$ and
$B=\overline{U}$, then the
excision theorem states that $H_1(\tilde{\Sigma}, \Sigma^{c}; \ZZ/2\ZZ)$ and
$\relhomol{1}$ are isomorphic. Let $e_{\tilde{\Sigma},\Sigma}$ the isomorphism
from the former space to latter.
The composition $\Pi_{\tilde\Sigma,\Sigma}=e_{\tilde{\Sigma},\Sigma}\circ
\iota_{\tilde\Sigma,B}$ defines a morphism from $H_1(\tilde\Sigma; \ZZ/2\ZZ)$ to
$\relhomol{1}$.

To construct a representative of $\Pi_{\tilde\Sigma,\Sigma}(\epsilon)$ for a
homology class $\epsilon\in H_1(\tilde\Sigma; \ZZ/2\ZZ)$, consider
$\rep{\epsilon}$ a cycle representing $\epsilon$ in $\tilde{\Sigma}$. A
representative of $\Pi_{\tilde\Sigma,\Sigma}(\epsilon)$ is then simply obtained
by taking the intersection of $\rep{\epsilon}$ with $\Sigma$, which is a
relative 1-chain of $\Sigma$ relative to its boundary $\partial\Sigma$.

\bibliographystyle{alpha}
\bibliography{survey}

\end{document}